\DeclareMathAlphabet{\mathpzc}{OT1}{pzc}{m}{it}
\newtheorem{propo}{Proposition}[section]
\newtheorem{lemma}[propo]{Lemma}
\newtheorem{thm}[propo]{Theorem}
\newtheorem{remark}[propo]{Remark}
\def\endproof{\hfill$\Box$\vspace{0.4cm}}
\newcommand{\eqnsection}{\renewcommand{\theequation}{\thesection.\arabic{equation}}
      \makeatletter \csname @addtoreset\endcsname{equation}{section}\makeatother}
\newcommand{\proc}[1]{ #1 (\cdot) }
\def\eps{\epsilon}
\def\th{\theta}
\def\ka{\kappa}
\def\al{\alpha}
\def\la{\lambda}
\def\si{\sigma}
\def\de{\delta}
\def\ta{\tau}
\def\tn{\tau/n}
\def\D{\Delta}
\def\ze{\zeta}
\def\gam{\gamma}
\def\om{\omega}
\def\lta{\underline{\tau}_n}
\def\uta{\overline{\tau}_n}
\def\ltai{\underline{\tau}_n^i}
\def\utai{\overline{\tau}_n^i}
\def\cA{{\cal A}}
\def\cK{{\cal K}}
\def\cC{{\cal C}}
\def\cG{{\cal G}}
\def\cF{{\cal F}}
\def\cD{{\cal D}}
\def\cM{{\cal M}}
\def\cH{{\cal H}}
\def\cV{{\cal V}}
\def\l|{\left|\left|}
\def\r|{\right|\right|}
\def\ll{\left( }
\def\rr{\right) }
\def\lbr{\left\{ }
\def\rbr{\right\} }
\def\ln{\left| }
\def\rn{\right| }
\def\lsq{\left[ }
\def\rsq{\right] }
\def\fl{ \lfloor }
\def\fr{ \rfloor }
\newcommand{\ceil}[1]{ \lceil #1 \rceil }
\newcommand{\floor}[1]{ \lfloor #1 \rfloor }
\def\RR{\mathds R}
\def\P{\mathds P}
\def\E{\mathds E}
\def\1{\mathds 1}
\def\ind{{\mathds I}}
\newcommand{\reals}{{\mathds R}}
\newcommand{\ints}{{\mathds Z}}
\def\A{\mathds A}
\def\B{\mathds B}
\def\tA{\tilde{ \mathds A } }
\def\tB{\tilde{ \mathds B } }
\def\zn{ \vec{z}\,^{(n)} }
\def\En{{\mathds E}_n}
\def\Enr{ {\mathds E}_{n,\rho} }
\def\Pn{{\mathds P}_n}
\def\Pnr{ {\mathds P}_{n,\rho} }
\def\Paux{ \Pnr^{ \mbox{\tiny\rm aux} } }
\def\G{\mathds G}
\def\U{\mathds U}
\def\Q{\mathds Q}
\def\V{\mathds V}
\def\sfA{{\sf A}}
\def\sfD{{\sf D}}
\def\sfN{{\sf N}}
\def\sfk{{\sf k}}
\def\sfd{{\sf d}}
\def\sfe{{\sf e}}
\def\sfg{{\sf g}}
\def\sfh{{\sf h}}
\def\sfR{{\sf R}}
\def\sfG{{\sf G}}
\def\sfS{{\sf S}}
\def\sfm{\vec{\sf m}^\dag}
\def\sfmr{\vec{\sf m}}
\def\sfnr{\vec{\sf n}}
\def\sTV{{\mbox{\tiny\rm TV}} } 
\def\Fr{ {\mbox{\tiny\rm FR}} } 
\def\Tr{ {\mbox{\rm Tr}} }
\def\ex{{\mbox{\tiny\rm exit}} } 
\def\nex{{\mbox{\tiny\rm no exit}} } 
\def\nocore{{\mbox{\tiny\rm no core}} } 
\def\sfWD{ W_n }
\def\sfWR{ \widehat{W}_n }
\def\hW{ \widehat{W} }
\def\maly{ {\varepsilon} }
\def\mala{ {\de} }
\def\zrn{ \z_{n, \rho}  }
\def\sfK{{ \sf K}}
\def\z{\vec{z}}
\def\ztau{\vec{z} (\tau)}
\def\zp{\vec{z} \,'}
\def\Dz{\Delta \vec{z}}
\def\F{\vec{F}}
\def\R{\vec{R}}
\def\x{\vec{x}}
\def\xp{\vec{x} \,'}
\def\y{\vec{y}}
\def\ytn{\vec{y}(\tau/n)}
\def\ysnrho{ \vec{y}(\si/n,\rho)  }
\def\ytnrho{ \vec{y}(\tau/n,\rho) }
\def\ysn{\vec{y}(\si/n)}
\def\thc{ {\theta_c} }
\def\vxi{ \vec{\xi} }
\def\ovth{ { \overline{\th} } }
\def\ovn{  { \overline{n}   } }
\def\calF{{\cal F}}
\def\pip{{\partial}}
\def\did{ \mathrm{d} }
\def\Qst{ { \mathds Q }^\ast }
\def\yst{ \vec{y}\,^\ast }
\def\Fi{ F_i }
\def\Gi{ G_i }
\def\thic{ \theta^i_c }
\def\thjc{ \theta^j_c }
\def\rhoc{ \rho_c }
\def\tast{ \tau_\ast }
\def\bKn{ \mathbf{K}_n }
\def\Var{ \mathrm{Var} }
\def\Cov{ \mathrm{Cov} }
\def\Ai{ \mathrm{Ai} }
\def\Bi{ \mathrm{Bi} }
\def\bYn{ \mathbf{Y}_n }
\def\bZn{ \mathbf{Z}_n }
\def\bHn{ \mathbf{H}_n }
\def\bXn{ \mathbf{X}_n }
\def\bAn{ \mathbf{A}_n }
\def\bGn{ \mathbf{\Gamma}_n }
\def\bG{ \mathbf{\Gamma} }
\def\bSn{ \mathbf{\Sigma}_n }
\def\bS{ \mathbf{\Sigma} }
\def\bVn{ \mathbf{V}_n}
\def\bN{ \mathbf{N} }
\def\bY{ \mathbf{Y} }
\def\bzer{ \mathbf{0} }
\def\aa{ \mathbf{a} }
\def\bb{ \mathbf{b} }
\def\xx{ \mathbf{x} }
\def\yy{ \mathbf{y} }
\def\zz{ \mathbf{z} }
\def\BB{ \mathbf{B} }
\def\AA{ \mathbf{A} }
\def\TT{ \mathbf{T} }
\def\SS{ \mathbf{S} }
\def\bO{ \mathbf{O} }
\def\bv{ \mathbf{v} }
\def\bV{ \mathbf{V} }
\def\bg{ \mathbf{g} }
\def\logp{ \log_+ }
\def\RN{ \reals^N }
\def\vze{ \vec{\zeta} }
\def\bLam{ \mathbf{\Lambda} }
\def\oz{ \z }
\def\ozp{ \zp }
\def\t{ \vec{\tau} }
\def\tp{ \vec{\tau} \,'}
\def\vo{ \vec{\om} }
\def\vop{ \vec{\om} \,'}
\def\orn{ \vo_{n, \rho}  }
\def\amb{ {\cal A} }
\def\dom{ {\sf H} }
\def\domt{ \widetilde{\dom} }
\def\Wt{ \widetilde{W} }
\def\sfx{ {\sf x} }
\def\sfy{ {\sf y} }
\def\sfz{ {\sf z} }
\def\sfv{ {\sf v} }
\def\sfu{ {\sf u}^\rho }
\def\coeff{ {\sf coeff} }
\def\dd{ d }
\def\ovd{ \overline{d} }
\def\ovell{ \overline{\ell} }
\def\hta{ \hat{\tau} }
\def\pp{ {\mathfrak p} }
\def\ss{ {\mathfrak s} }
\def\Poisson{{\sf Poisson}}
\def\dist{{\sf dist}}
\def\Fx{ \vec{F}\,^{(u)} } 
\def\Fy{ \vec{F}\,^{(v)} }
\def\sfV{ {\sf V} }
\def\cn{ \vec{u} }
\def\vn{ \vec{v} }
\def\Qc{ \hat{\Q} }
\def\urj{ u_{\rho,j} }
\def\cGt{\widetilde{\cG}}
\begin{document}
\title{Sharp approximation for density dependent Markov chains}

\author{Kamil Szczegot\thanks{Department of Mathematics, Stanford University  
\newline\indent
\newline Research partially supported by NSF grant \#DMS-0806211.
\newline
{\bf AMS (2000) Subject Classification:}
{Primary: 60J10, 60F17; Secondary: 82B26, 68W20, 94A29 }
\newline
{\bf Keywords:} Markov chains, limit theorem, exit problem, finite size scaling, core, random hypergraph, low-density parity-check codes} }

\date{\today}

\maketitle 

\begin{abstract}
Consider a sequence (indexed by $n$) of Markov chains $\z\,^{(n)}(\cdot)$ in  $\RR^d$ characterized by transition kernels that approximately (in $n$) depend only on the rescaled state  $n^{-1} \z\,^{(n)}(\ta)$. Subject to a smoothness condition, such a family can be closely coupled on short time intervals to a Brownian motion with quadratic drift. This construction is used to determine the first two terms in the asymptotic (in $n$) expansion of the probability that the rescaled chain exits a convex polytope. The constant term and the first correction of size $\Theta(n^{-1/6})$ admit sharp characterization by solutions to associated differential equations and an absolute constant. The error is smaller than $O(n^{-\eta})$ for any $\eta < 1/4$.

These results are directly applied to the analysis of randomized algorithms at phase transitions. In particular, the `peeling' algorithm in large random hypergraphs, or equivalently the iterative decoding scheme for low-density parity-check codes over the binary erasure channel is studied to determine the finite size scaling behavior for irregular hypergraph ensembles.
\end{abstract}

\section{Introduction}

We deal with a sequence of Markov chains $\proc{\zn}$ taking values in a countable set $\sfS \subset\RR^d$, started at an initial distribution $\z(0)$, and with transition probabilities given by a sequence of kernels
\begin{eqnarray}\label{def:Wn}
W_n(\D | \zn(\ta)) \equiv \P ( \zn(\tau +1) - \zn(\ta) = \D | \zn(\ta))
\end{eqnarray}
satisfying $W_n(\D|\z) \approx W(\D|n^{-1}\z)$, where $W$ has compact support in the first coordinate and is smooth in the second coordinate. Markov chains in this class are in a sense `slowly-varying': they make jumps of size $\Theta(1)$, but the kernel depends only on transitions on the scale of $\Theta(n)$.  The study of pure-jump Markov processes with an analogous property (transition intensities of the form $n \beta(n^{-1}\cdot)$) goes back at least to \cite{Ku70}. They have since been widely applied to population processes in epidemics, queueing, or networks (cf. \cite{Ku81}, \cite{AB00}, \cite{Wh02} and references therein), as well as models for chemical reactions (cf. \cite{BKPR06} for a recent contribution). These applications motivate the name \emph{density dependent}, since $n^{-1}\zn$ can be thought of as the density of a population of $\zn$ individuals living in an area of size $n$. Another application, particularly relevant here, is the analysis of randomized algorithms arising in (probabilistic) combinatorics and optimization (cf. \cite{Wor95} and others).

Letting $\z(0) = n \y_0$ for a non-random (for now) $\y_0$, it is known that the rescaled, interpolated process $n^{-1}\zn(n\,\cdot)$ on compact time intervals $[0,\ovth]$ concentrates near (its \emph{fluid limit}) the solution to the ordinary differential equation
\begin{eqnarray}\label{eq:ODE}
\frac{\did \y}{\did \th}(\th) = \F(\y(\th))\,,\qquad \y(0) = \y_0,
\end{eqnarray}
where $\F (\x) \equiv \sum_{\D}\D \; W(\D | \x)$.  It is a consequence of the results on convergence of Markov semigroups that the scaled fluctuation of the process about this solution, $\sqrt{n}\lbr n^{-1}\z(n \,\cdot) - y(\cdot)\rbr$, converges weakly to a diffusion. Both results are special cases in the rich literature on convergence of Markov processes: cf. \cite{EK86, Kal02} for general theory, \cite{Wh02} as well as work of M.~Bramson, M.~Harrison, and R.~Williams for fluid and diffusive limits in queueing theory, and \cite{DN08} for a recent survey of fluid limits with quantifiable error probabilities (in the spirit of Lemma \ref{prop:ode} here).

In view of these results, one can think of the rescaled process as a random perturbation of the vector flow $\F$ and the question arises of how randomness changes the dynamics. Systems with Gaussian or Markov perturbation are a central theme of \cite{FW84} (where among other results large deviation estimates for the place of exit from a domain $\sfD$ are derived) and research following from there, of which the most relevant to the problem to be defined here is the case of characteristic boundaries (cf. \cite{Day89} and others). Asymptotic expansions of probabilities for chains with two time-scales, i.e. a random perturbation of a Markov chain have been surveyed in \cite{YZ05}. There are also extensions of fluid limits with error estimates, cf. \cite{Tur07} for a recent contribution.
In contradistinction to these results, we are interested in the fine asymptotics of the probability that the rescaled Markov chain exits a domain $\sfD$ before some fixed time $\ovth > 0$ and when this probability does not tend to either 0 or 1, which happens when the vector field $\F$ is tangent to the boundary. This is apparent from the left panel of Figure \ref{fig:A}, where the bound coming from the fluid limit estimate does not prevent the chain from exiting the domain, as it would should the deterministic evolution $\y$ remain in the interior of $\sfD$. 
\begin{figure}[t]
\begin{center}
\resizebox{3.5in}{!}{\input{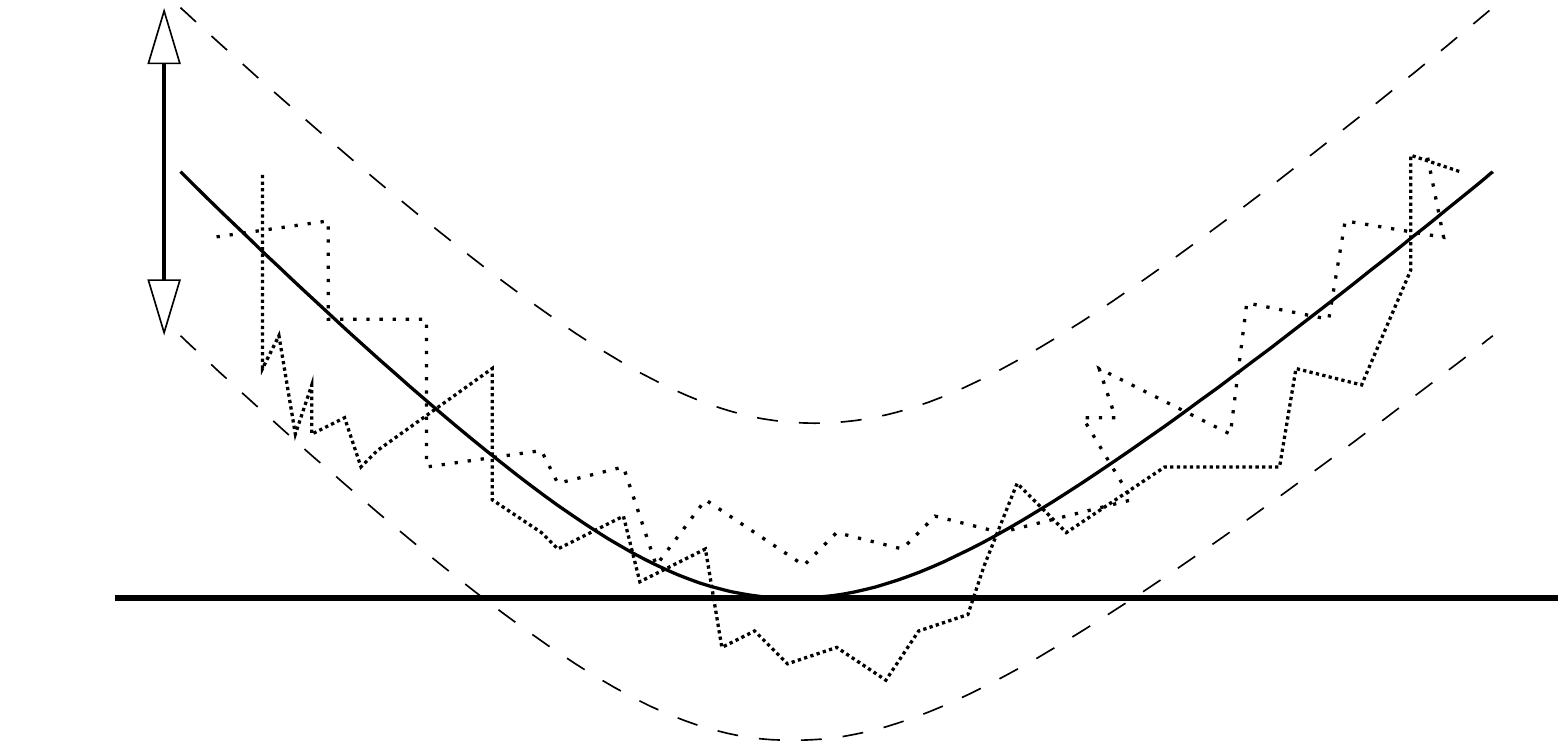tex_t}}
\qquad
\resizebox{2.7in}{!}{\input{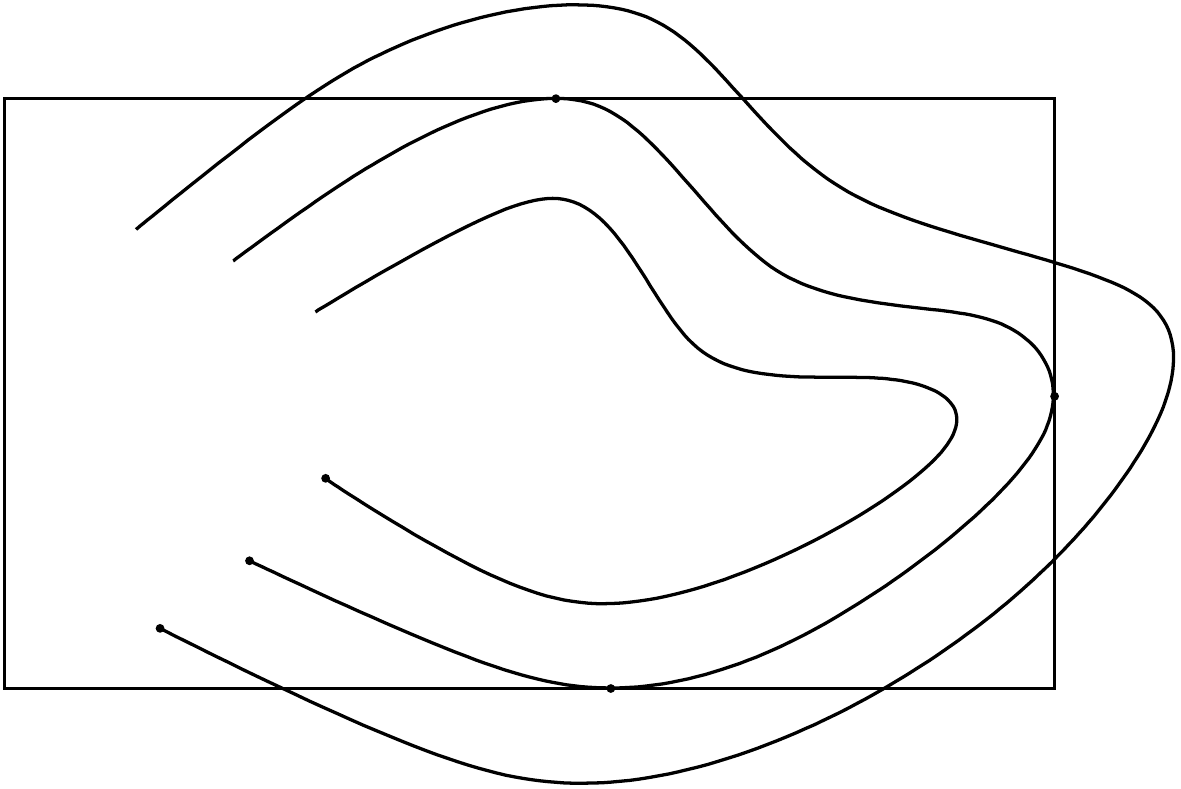tex_t}}
\caption{{\bf Left panel:} The interesting case of the vector field $\F$ tangent to the boundary $\pip \sfD$. Two realizations of the chain $n^{-1}\zn(n\,\cdot)$ (dotted lines) remain within distance $O(\sqrt{(\log n)/n})$ of the deterministic evolution $\y$ (solid parabola); one realization exits the domain, while the other remains in the interior. {\bf Right panel:}
Example setting in $\RR^2$. Here $\sfD$ is a rectangle and the critical trajectory started at $y_0$ touches the boundary of $\sfD$ at $N=3$ critical times $\theta_c^1$, $\theta_c^2$, and $\theta_c^3$}\label{fig:A}
\end{center}
\end{figure}

To investigate this probability as a function of the initial condition, we introduce the following construction. 
Let $\rho \mapsto y_\rho$ be a smooth parametrization (from $(-\de,\de)$ into $\sfD$) of the initial condition, such that the solution of (\ref{eq:ODE}) with $y(0) = y_0$ remains in the interior $\sfD^\circ$ of $\sfD$ except at finitely many \emph{critical times} $\thic$, $i \leq N$, when the trajectory is tangent to the boundary. (See Figure \ref{fig:A} for an example and Section \ref{sec:introexit} for a rigorous statement.)  It follows from the error bounds on the fluid limit available in our setting, that the probability $P_\ex(n, \rho)$ of the exit from $\sfD$ by chain $n^{-1} \zn(\cdot)$ started at $n^{-1}\zn(0) = \y_\rho$ before time $\ovn \equiv \floor{n\ovth}$ equals 0 or 1 (up to $O(e^{-c n})$), depending on whether the trajectory  $\y(\cdot)$ started at $\y_\rho$ remains in $\sfD^\circ$ or exits $\sfD$, respectively. (See also \cite[Section 4.3]{DN08} for a basic result on the place of exit.) 
Thus we have a phase transition for parameter $\rho$ and as a consequence of the diffusive limit result (assuming the chain is non-degenerate) the size of the scaling window of this transition is $O(n^{-1/2})$, meaning that $P_\ex(n, rn^{-1/2}) = f_1(r) + o(1)$ as $n \rightarrow \infty$ for some smooth function $f_1$. The first intended contribution of this paper is to take the analysis further and determine the \emph{finite size scaling} (FSS) behavior, i.e. explicitly characterize the first correction term $f_2$ and establish an error bound to the effect of
\begin{eqnarray}\label{eq:fss}
P_\ex(n, rn^{-1/2}) = f_1(r) + f_2(r) n^{-1/6} + O(n^{-1/4+\eps}) 
\end{eqnarray}
for any $\eps>0$ (see Section \ref{sec:introexit}). 

The main result (\ref{eq:fss}) has direct applications to the analysis of randomized algorithms in large combinatorial problems, where density dependent chains describing position in a state-space $\sfS$ naturally appear. One is interested in the probability of such an algorithm failing (or halting), which corresponds to the exit of the chain from some domain, leading to a phase transition (failure vs. success) for the initial data. The expansion (\ref{eq:fss}) allows one to obtain detailed information about the FSS in quite some generality from essentially the solution of three differential equations. Obtaining such detailed information has only been possible so far in special cases, for example the Erd\"os-Renyi random graph at the criticality (cf.  \cite{JKLP93}, \cite{ABG09} and references therein), even though cutoff phenomena like the one just described have been intensely studied. Cf. \cite{Fri99} for existence of sharp thresholds for graph properties; \cite{Wor95} for location of thresholds for randomized agorithms;
\cite{Wil02} for size of a phase transition window (`Harris criterion'); \cite{Di96} and others  for cutoffs for mixing times. 

The application described in the previous paragraph is illustrated here with a specific example. Section 5 develops a framework, based around Proposition \ref{lem:hyperasymptotic}, for the study of the `peeling' algorithm in large random hypergraphs. Within this framework the main result of this paper is applied to establish the FSS behavior in the phase transition for the existence of a 2-core in an ensemble of particular interest (see Section \ref{sec:introhyper}). This result has important consequences for information transmission over noisy channels, which are explained in Remark \ref{rem:importance}. Morevoer, other ensembles of importance can be now rather straightforwardly analyzed using this framework (see Remark \ref{rem:final}).

The result (\ref{eq:fss}) was derived in \cite{DM08} for a special case (a particular Markov chain), with $\sfD = \ints_+\times\ints$ and one critical time ($N=1$). In the general setting considered here new methods are needed to handle the difficulties coming from handling several critical times, exit through different faces of $\sfD$, and the chain not being density dependent outside $\sfD^\circ$. The latter is not an artificial condition and occurs in applications, where the algorithm failure corresponds to the chain reaching a coffin state at the boundary of $\sfD$, precluding extension of the kernel $W_n$. This is dealt with here by introducing another Markov chain with kernel $\hW(\cdot|\x) = W(\cdot|\sfK(\x))$ (extension of the smooth kernel $W$ through projection $\sfK$ onto $\sfD$) and comparing the vector field associated with $\hW$ to the smooth extension (from $\sfD^\circ$) of the vector field associated with $W$ (see Lemma \ref{lem:solutionproperties}). In turn the existence of several critical times prevents computation of $P_\ex$ by conditioning on not-exiting near previous critical times, as such conditioning changes the distribution of the process. Instead, multivariate Taylor expansion of standard normal distribution in $\RR^N$ is used and it is shown that the dependent terms corresponding to the influence of exit near previous critical times are small enough (see Proposition \ref{prop:2}). This also leads to a certain simplification and a sharper error estimate in the asymptotic expansion as compared with \cite{DM08}. 

The remainder of Section 1 is devoted to the rigorous statements of the main result (\ref{eq:fss})  (in Section \ref{sec:introexit}) and the FSS for the 2-core problem (in Section \ref{sec:introhyper}).

\subsection{Density dependent Markov chain exiting a polytope}\label{sec:introexit}

Let $\sfD \equiv \{\x \in \RR^d : \sfnr_i\cdot\x \geq \sfg_i \;\mathrm{for}\; i=1,\ldots,\sfN \}$, where $\sfnr_i$ are unit vectors and $\sfg_i \in \RR$, be a fixed non-degenerate convex polytope in $\RR^d$ and let $\{\sfWD\}$ be a family of kernels defining transition probabilities on some countable set $\sfS$ as in (\ref{def:Wn}) and satisfying the following two conditions:
\begin{enumerate}
\item increments are uniformly bounded, i.e. there exists a finite constant $\sfk_1$ such that for all $n$, $\z \in \sfS$ and $\D\in\RR^d$ with $||\D|| > \sfk_1$ we have $\sfWD(\D| \z) = 0$;
\item there exists a probability kernel $W(\cdot|\cdot): \RR^d\times\sfD \rightarrow [0,1]$, such that for each $\D \in \RR^d$ the functions $\{ \x \mapsto W(\D|\x) : \D\in\RR^d\}$ are twice differentiable in $\sfD$ (including the boundary) with (uniformly in $\D$) Lipschitz continuous derivatives and such that for some constants $\sfk_2>0$ and $\maly < \tfrac12$ we have
\begin{eqnarray}\label{ass:dd}
\l| \sfWD(\cdot | \z) -  W(\cdot | n^{-1}\z) \r|_{\sTV} \leq \sfk_2 n^{-1+\maly}
\end{eqnarray}
for all $\z \in \sfS\cap (n\sfD^\circ)$, where $n\sfD^\circ = \{\x\in\RR^d : n^{-1}\x \in\sfD^\circ\}$ and $||\cdot||_{\sTV}$ denotes the total variation distance.
\end{enumerate}
Let $\rho \mapsto \y_\rho$ be a twice differentiable map with bounded derivatives from $(-\mala,\mala)$ into $\sfD^\circ$. Let $\y(\cdot)$ be a twice differentiable (with Lipschitz continuous second derivative) solution of (\ref{eq:ODE}) on the interval $\th \in [0,\ovth]$ with $\y(0) = \y_0$. Suppose there exists a finite subset $\Theta \equiv \{\th^1_c,\ldots, \th^N_c\} \subset (0,\ovth)$ such that $\sfd(\y(\th)) \equiv \min_i (\sfnr_i\cdot\y(\th) - \sfg_i) > 0$ for all $\th \in [0,\ovth]\backslash\Theta$, whereas for each $1 \leq i \leq N$ there exists a unique $j(i)$ such that setting $\sfmr_i \equiv \sfnr_{j(i)}$ and $\sfd_i \equiv \sfg_{j(i)}$ we have
\begin{eqnarray}\label{tangencyconditions}
\sfmr_i\cdot\y(\thic) = \sfd_i\;, \qquad
\sfmr_i\cdot \frac{\did \y}{\did \th} (\thic) = 0\;, \qquad\mbox{and}\qquad
\sfmr_i\cdot \frac{\did^2 \y}{\did \th^2} (\thic) > 0.  
\end{eqnarray}
To allow for a random perturbation of the initial state $\y_\rho$, let $\{\zrn: n, \rho \}$ be a family of random variables taking values in $\sfS\cap (n\sfD^\circ)$ and such that for some positive constants $\sfk_3$, $\sfk_4$, $\sfk_5$ and $\sfk_6$ the following hold uniformly in $n$ and $\rho$:
\begin{eqnarray}
\P \{ \l| \zrn - \E \zrn \,\r| \geq r \} &\leq& \sfk_3 e^{-r^2/(\sfk_4 n)}\label{ass:exp}\\
|| \E\zrn - n \y_\rho || &\leq& \sfk_5 \label{ass:mean}\\
\sup_{\U \in {\cM}_{N, d}} \,\sup_{\x \in \RR^d} 
\ln \P\{\U (\zrn - \E \zrn) \leq \x \} - \P\{ n^{1/2}\U \vze_\rho \leq \x \} \rn 
&\leq& \sfk_6 n^{-1/2}, \label{neargaussian}
\end{eqnarray}
where $\cM_{N, d}$ denotes the collection of real $N\times d $ matrices and $\vze_\rho$ is a Gaussian vector in $\RR^d$ with zero mean and positive semi-definite (p.s.d.) covariance matrix $\Q_\rho$, such that $\rho \mapsto \Q_\rho$ is a Lipschitz continuous map (for the operator norm). 
Note that, in particular, $\zrn = n\y_\rho$ satisfies this with $\Q_\rho = 0$.

For each $n$ and $|\rho| < \mala$ let $\Pnr$ denote the law of the Markov chain $\proc{\z}$ started at $\z(0) =\zrn$ and with transition probabilities given by $\sfWD$. The following is the formal statement of the result announced as (\ref{eq:fss}).

\begin{thm}\label{thm:markovexit}
Let $r\in \RR$, $\rho_n =  rn^{-1/2}$ and assume that $\bS$, defined in (\ref{def:bSinthm}), is non-singular. Then for any $\eta < 1/4\wedge(1/2-\maly)$ we have
\begin{eqnarray*}
P_\nex(n,\rho_n)\equiv \P_{n,\rho_n}\lbr \min_{\ta\leq \ovn}\sfd(\ztau) > 0 \rbr = \Phi(r\aa) - \bb^\dag \,\nabla \Phi(r\aa)\; \Omega \, n^{-1/6}  + O(n^{-\eta}),
\end{eqnarray*}
where $\Phi(\cdot)$ denotes the standard normal distribution function in $\RN$, $\aa = \bS^{-\frac{1}{2}}\bG$ and $\bb = \bS^{-\frac{1}{2}}\bLam$ for the $N\times N$ p.s.d. symmetric matrix $\bS$ and $N\times 1$ vectors $\bG$ and $\bLam$ with entries given by ($1 \leq i \leq j \leq N$)
\begin{eqnarray}
\relax [\bS]_{i j} &\equiv& \sfm_i \,\Q(\thic)\, (\B_{\thic}(\thjc))^\dag \,\sfmr_j  
\label{def:bSinthm}\\
\relax [\bG]_i &\equiv& \sfm_i\,\B_0(\thic) \left.\frac{\did \y_\rho}{\did \rho} \right|_{\rho=0} \label{def:bGinthm}\\
\relax [\bLam]_i &\equiv& \lsq\sfmr_i\cdot\frac{\did^2 \y}{\did \th^2}(\thic) \rsq^{-1/3}\lsq \sfm_i\,\G(\y(\thic))\,\sfmr_i \rsq^{2/3} \label{def:bLam},
\end{eqnarray}
where $\G(\x)$ is the covariance matrix of $W(\cdot|\x)$ defined in (\ref{def:G}), and where $\B$ and $\Q$ satisfy the system (\ref{ode:b}-\ref{ode:var}) at $\rho=0$. Finally, the finite constant $\Omega$ is given by the integral
\begin{eqnarray}\label{def:Omega}
\Omega \equiv \int_0^\infty [1 - \cK(z)^2] \did z \, ,
\end{eqnarray}
where
\begin{eqnarray}\label{def:cK}
\cK \equiv \frac 12 \int_{-\infty}^\infty \frac{\Ai(iy)\Bi(2^{1/3}z+iy) - \Ai(2^{1/3}z+iy)\Bi(iy)}{\Ai(iy)} \did y
\end{eqnarray}
and $\Ai(\cdot)$, $\Bi(\cdot)$ are the Airy functions (as defined in \cite[p. 446]{AS64}).
\end{thm}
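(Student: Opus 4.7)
The plan is to reduce the no-exit event to the intersection of local no-exit events in narrow windows around each critical time $\thic$, approximate the rescaled transverse coordinate in each window by a Brownian motion with quadratic drift, and then assemble the contributions via the multivariate Gaussian law of the offsets at those times. Away from the critical set $\Theta$, the deterministic trajectory $\y$ is at positive distance from $\pip\sfD$, so the fluid-limit bound (Lemma \ref{prop:ode}), together with the extension from $\sfD^\circ$ provided by $\hW = W(\cdot|\sfK(\cdot))$ (Lemma \ref{lem:solutionproperties}), shows that $n^{-1}\zn(n\,\cdot)$ tracks $\y$ to within $O(\sqrt{n^{-1}\log n})$ except in time-windows of width $\Theta(n^{2/3+\eps'})$ about each $n\thic$, with the exceptional probability exponentially small in $n$.

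Next, I analyze a single critical window. The tangency conditions (\ref{tangencyconditions}) force $\sfmr_i\cdot\y(\th)-\sfd_i$ to vanish quadratically at $\thic$. Rescaling $\ta = n\thic + n^{2/3}s$ and the transverse offset by $\xi = n^{1/3}[\sfmr_i\cdot\z(\ta)/n-\sfd_i]$, the deterministic part of $\xi$ becomes $\tfrac12\sfmr_i\cdot(\did^2\y/\did\th^2)(\thic)\,s^2 + o(1)$, while the sharp Brownian coupling announced in the abstract approximates the fluctuation by a Brownian motion with infinitesimal variance $\sfm_i\,\G(\y(\thic))\,\sfmr_i$ in $s$. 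An affine rescaling by the exponents in (\ref{def:bLam}) normalizes both the drift curvature and the noise variance to $1$. For the canonical Brownian motion with quadratic drift, a Laplace-transform argument via the Airy equation identifies the survival function with $\cK$ in (\ref{def:cK}), and the discrepancy between genuinely surviving the crossing-region and merely being above the parabola at the window endpoints contributes $[\bLam]_i\,\Omega = [\bLam]_i\int_0^\infty[1-\cK(z)^2]\,\did z$ per critical time, to order $n^{-1/6}$.

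For the assembly step, let $\sfU_i \equiv n^{-1/2}[\sfmr_i\cdot\z(n\thic) - n\sfd_i]$. Propagating the initial perturbation $\did\y_\rho/\did\rho|_{\rho=0}$ along the linearization of (\ref{eq:ODE}) via $\B_0(\thic)$ produces the mean $r\bG$ of $\sfU$, while propagating the diffusive covariances via $\B_{\thic}(\thjc)$ yields the covariance $\bS$. Using (\ref{neargaussian}), $\sfU$ is thus within $O(n^{-1/2})$ of a Gaussian, so that the leading-order no-exit probability equals $\P(\bS^{-1/2}(\sfU - r\bG) \geq -r\aa) = \Phi(r\aa)$. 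For the $n^{-1/6}$ correction, write
\begin{equation*}
P_\nex(n,\rho_n) = \E\Bigl[\,\prod_{i=1}^N \widetilde{\cK}_i(\sfU_i)\,\Bigr] + O(n^{-\eta}),
\end{equation*}
where $\widetilde{\cK}_i$ is the appropriately rescaled local survival function from the previous step, and expand $\prod_i \widetilde{\cK}_i = 1 - \sum_i(1-\widetilde{\cK}_i) + O(n^{-1/3})$. Each first-order term integrates against the Gaussian density of $\sfU$ to produce a directional derivative $\sfmr_i^\dag\nabla\Phi(r\aa)$ times $-[\bLam]_i\,\Omega\,n^{-1/6}$, and these assemble in $\bS^{-1/2}$-coordinates into $-\bb^\dag\,\nabla\Phi(r\aa)\,\Omega\,n^{-1/6}$.

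The principal obstacle is to carry out this assembly \emph{without conditioning}, since conditioning on no-exit near $\thic$ would bias the distribution of $\z(n\thjc)$ for $j>i$. This is the role of Proposition \ref{prop:2} (announced in the introduction): a multivariate Taylor expansion of the Gaussian density in $\RN$, combined with the product-of-$\cK$ representation above, shows that the $O(n^{-1/3})$ cross-terms from the expansion and the $O(n^{-1/2+\maly})$ error from (\ref{ass:dd}) both sit inside the $O(n^{-\eta})$ remainder for any $\eta < 1/4 \wedge (1/2-\maly)$.
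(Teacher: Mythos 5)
Your overall architecture coincides with the paper's: localization to windows around the critical times, a strong (Sakhanenko/KMT-type) coupling of the transverse coordinate to a Brownian motion with parabolic drift in each window, Gaussian approximation of the joint offsets with mean $r\bG$ and covariance $\bS$ propagated through the linearized ODE system, and extraction of the $n^{-1/6}$ term from the expected global infimum of the parabolic-drift Brownian motion ($\E\inf_t(\tfrac12 t^2+W(t))=-\Omega$, via Groeneboom) without conditioning on survival at earlier critical times. However, two steps are concretely wrong as written. First, the assembly expansion $\prod_i\widetilde{\cK}_i = 1-\sum_i(1-\widetilde{\cK}_i)+O(n^{-1/3})$ fails: the local survival function $\widetilde{\cK}_i(\sfU_i)$ is not within $o(1)$ of $1$; it is close to the indicator $\1_{\sfU_i>0}$ (it equals $\P(V^i\geq -cn^{1/6}\sfU_i)$ for the window infimum $V^i$), so $\E[1-\widetilde{\cK}_i(\sfU_i)]\approx\P(\sfU_i\leq 0)=O(1)$ and the discarded cross-terms contain $\P(\sfU_i\leq 0,\sfU_j\leq 0)=O(1)$. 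The expansion must be taken around the product of indicators, i.e.\ one writes the no-exit probability as $\E\,\Phi\bigl(\bS^{-1/2}(r\bG+n^{-1/6}\,\mathrm{diag}(\bLam)\,\vec{V}+\cdots)\bigr)$ for independent centered-plus-$(-\Omega)$ variables $V^i$ and Taylor-expands the smooth multivariate Gaussian CDF to second order; this is exactly what Proposition \ref{prop:2} and the final section of the paper's proof do, and it is where the $O(n^{-1/3})$ bound on the quadratic remainder actually comes from.

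Second, your window width $\Theta(n^{2/3+\eps'})$ is too narrow for the localization step. Outside a window of half-width $n^{\beta}$ around $n\thic$ the deterministic clearance is $\sfd(n\y(\th))\asymp n\cdot n^{2(\beta-1)}=n^{2\beta-1}$, which must dominate the $O(\sqrt{n\log n})$ fluid-limit fluctuation; this forces $\beta>3/4$, and the paper takes $\beta\in(3/4,1)$. With $\beta=2/3+\eps'$ the chain can exit $\sfD$ outside your windows with non-negligible probability, so the reduction to an intersection of local no-exit events breaks down. This is not cosmetic: the competing requirements (windows wide enough to localize, but narrow enough that the linearization/independent-increment approximation inside them costs only $n^{3\beta-2}$ on the $n^{1/2}$ Gaussian scale) are precisely what produce the exponent $\eta<1/4=5/2-3\cdot(3/4)$, so by ignoring this tension you have no derivation of the claimed error rate. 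The remaining steps (the role of the projected kernel $\hW$, the propagation of $\bG$ and $\bS$ via $\B_0$ and $\B_{\thic}(\thjc)$, and the Airy-function identification of $\cK$) are consistent with the paper.
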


\begin{remark}
Theorem \ref{thm:markovexit} can be extended, as follows, to provide some information in the case when $\bS$ is singular, which can occur when the Markov chain is degenerate. Write 
$\bS = \bO \,{\rm diag}(\la_1, \ldots, \la_N)\, \bO^\dag$, where $\bO = (\bv_1, \ldots, \bv_N)$ is an orthogonal matrix with $\{\bv_i\}$ the eigenvectors of $\bS$ and $\{\la_i\}$ are the corresponding eigenvalues of $\bS$ in decreasing order. Let $k \equiv \min\{i: \la_i = 0\} < N$ and define $\cV = \langle \bv_1,\dots,\bv_k \rangle$, the span of $\{\bv_1,\ldots,\bv_k\}$. Setting  $\bV \equiv (\sqrt{\la_1}\bv_1, \ldots, \sqrt{\la_k}\bv_k)$ and $\vze$ is a standard Gaussian vector in $\RR^k$, straightforward calculus shows that the function
$\widehat{\Phi}(\cdot) \equiv \P(\bV \vze \leq \cdot)$ is smooth on $\RR^N\backslash \cV$. It follows (by modifying the last part of the proof of Proposition \ref{prop:2} to allow for singular $\bS$ by projecting $\{\bg_k+\hat{\bg}_k\}$ (there) onto $\cV$, and generalizing the estimate in Lemma \ref{lem:gaussianbound}), that for all $r\in\RR$ such that $r\bG \notin \cV$
\[
P_\nex(n,\rho_n) = \widehat{\Phi}(r\bG) - \bLam^\dag \,\nabla  \widehat{\Phi}(r\bG)\; \Omega \, n^{-1/6}  + O(n^{-\eta}),
\]
where $\eta$ is like in Theorem \ref{thm:markovexit}.
In particular,  if $(r\bG + \cH) \cap \cV = \emptyset$, where $\cH$ denotes the negative orthant of $\RR^N$, then actually
\[
P_\nex(n,\rho_n) = O(n^{-\eta}).
\]

It is also straightforward to modify the proof to yield an expansion for the probability of exiting near a particular critical time, or equivalently through a particular face of $\sfD$. Finally, the result extends to domains with smooth boundaries, though the proof then requires analysis in boundary layer coordinates near critical times. The choice of polytope domains here is motivated by applications and clarity of presentation. 
\end{remark}

\subsection{Finite size scaling for the 2-core in irregular hypergraph ensembles} \label{sec:introhyper}

A $2$-core in a hypergraph is the unique maximal sub-hypergraph such that no hypervertex in it has degree less than $2$. The size of a $2$-core is defined to be the number of hyperedges it contains. In the following, we occasionally refer to the $2$-core as the `core'. Its existence can be established through recursively deleting all vertices of degree less than $2$ together with the associated hyperedges, which is the basis for the randomized \emph{peeling} or \emph{leaf removal} algorithm proposed by \cite{KS81}. Algorithms equivalent to this one, as well as $k$-cores of graphs and hypergraphs, have been studied as combinatorial objects, in connection to the XOR-SAT problem, and in the analysis of low-density parity-check codes over the binary erasure channel (cf. \cite{DM08} and references therein for more background). 

We are interested in the ensemble $\cG \equiv \cG(n,m,\sfv_n)$. $\cG$ is a collection of labelled hypergraphs in which each hyperedge $\al$ is a list (of size $j_\al\leq L < \infty$ uniformly in $n$ ) of \emph{not necessarily distinct} hypervertices chosen uniformly at random and with replacement. For each $n$ and $j$ ($3 \leq j \leq L$) there are $\sfv_n(j)$ hyperedges of size $j$ and we have $\sum_{j=3}^L\sfv_n(j)=n$. Moreover, we assume that there is a non-negative vector $\vn_0 = (v_{0,3},\ldots,v_{0,L})$ such that $|\sfv_n(j)-nv_{0,j}| \leq c_0 < \infty$ uniformly in $n$. (See Section \ref{sec:ensemble1} for more details.)

We are interested in the probability $P_\nocore(n,\rho)$ that a random graph $G$ in the ensemble $\cG(n,\fl n\rho \fr, \sfv_n)$ does not have a 2-core. Define $\sfV(x) \equiv \sum_{j=3}^Lj v_j x^{j-1}$ and let
\begin{eqnarray}
\rho_c \equiv \inf\{\rho>0 : \sfV(\ze)/\rho > -\log(1 - \ze) \quad\forall\,\ze \in (0,1) \}\wedge 1.
\end{eqnarray}
Consequently the set $Z \equiv \{\ze\in(0,1) : \sfV(\ze)/\rhoc = -\log(1-\ze) \}$ is non-empty and finite. We then write $Z = \{\ze_1,\ldots,\ze_{|Z|} \}$ with $\ze_i < \ze_j$ for $i<j$ and assume that 
\begin{eqnarray}
\sfV''(\ze)/\rho_c < (1-\ze)^{-2}\qquad\mbox{for all}\qquad\ze\in Z. \label{ass:ensemble1}
\end{eqnarray}
Let $\theta(\cdot)$ be the inverse function of the monotonically decreasing bijection $\ze: [0,1] \rightarrow [0,1]$, given by $\zeta(\th) \equiv \exp\{ - \int_0^\th \eta(s)^{-1} \did s \}$. Here $\eta(s) = \sum_{j=3}^Ljy_j(s)$ and $\y(\cdot)$ is the solution of the system (\ref{eq:ODE}) with
\begin{eqnarray}
\F(\x) &=&  \ll  -1 + \sfR'(1)(\pp_1-\pp_0)\,, - \sfR'(1)\pp_{1} , -\ss_3\,,\;\;\dots,\;\;\; -\ss_L  \rr^\dag\label{def:F}\\
\y_0 &=& \ll \mu e^{-\mu/\rhoc}, \; \rho_c (1-e^{-\mu/\rhoc}) - \mu e^{-\mu/\rhoc},\; \vn\,^\dag \rr^\dag,\label{def:y0}
\end{eqnarray}
where $\mu = \sfV(1)$, \;$\sfR(\xi) = \sum_{j=3}^L\ss_j\xi^{j-1}$ and where
\begin{eqnarray}
\pp_0 = \frac{x_1}{ \sum_{j=3}^Ljx_j},\quad\pp_1 = \frac{x_2\la^2}{(e^\la-1-\la)\sum_{j=3}^Ljx_j}, \quad
\ss_j = \frac{jx_j}{\sum_{j=3}^Ljx_j}\;\;\mbox{for}\;\; 3\leq j\leq L,
\end{eqnarray}
and $\la$ is the unique positive solution of $\;\la(e^\la-1)(e^\la-1-\la)^{-1} = (\sum_{j=3}^Ljx_j-x_1\vee 0)x_2^{-1}$.

\begin{thm}\label{thm:hypergraph}
Let $\rho_n = \rhoc + rn^{-1/2}$ and suppose (\ref{ass:ensemble1}) holds. Then for any $\eta < 1/4$
\begin{eqnarray*}
P_\nocore(n,\rho_n) = \Phi(r\aa_h) - \bb_h^\dag \,\nabla \Phi(r\aa_h)\; \Omega \, n^{-1/6}  + O(n^{-\eta}),
\end{eqnarray*}
where $\Phi$ and $\Omega$ are defined in Theorem \ref{thm:markovexit}, while $\aa_h$ and $\bb_h$ are respectively $\aa$ and $\bb$ of Theorem \ref{thm:markovexit} where one uses $N = |Z|$, $\thic = \th(\ze_{N-i})$, $\sfmr_i = \vec{\sfe}_1 \equiv (1,0,\ldots,0)^\dag$, $\y_0$ defined in (\ref{def:y0}), p.s.d. symmetric matrix $\Q_0$ with entries ($3 \leq i \leq j \leq L$)
\begin{eqnarray}
\left\{ \begin{array}{rcll}
Q_{0,11} &=& \rho_c \gam e^{-2 \gam} (e^\gam - 1 + \gam - \gam^2) \\
Q_{0,12} &=& - \rho_c \gam e^{-2 \gam} (e^\gam - 1 - \gam^2)\\
Q_{0,22} &=&  \rho_c\, e^{-2 \gam} [ (e^\gam - 1) + \gam(e^\gam - 2) - \gam^2(1+ \gam)]\\
Q_{0,ij} &=& 0,
\end{array}\right.
\end{eqnarray}
vector $\F$ defined in (\ref{def:F}), and p.s.d. symmetric matrix $\G(\x)$ 
with entries ($3\leq i \leq j \leq L$)
\begin{eqnarray}
\left\{ \begin{array}{rcl}
G_{11}(\x) &=& \sfR'(1)(\pp_0+\pp_1) + [\sfR''(1) - \sfR'(1)^2](\pp_1-\pp_0)^2\\
G_{12}(\x) &=& \sfR'(1)\pp_1 - [\sfR''(1) - R'(1)^2](\pp_1-\pp_0)\pp_1\\
G_{1j}(\x) &=& \ss_j(j-1-\sfR'(1))(\pp_1-\pp_0)   \\
G_{22}(\x) &=& \sfR'(1)\pp_1 + [\sfR''(1) - R'(1)^2]\pp_1^2\\
G_{2j}(\x) &=& - \ss_j(j-1-\sfR'(1))\pp_1         \\
G_{ij}(\x) &=& - \ss_i\ss_j + \ss_j\1_{i=j}.
\end{array}\right.
\end{eqnarray}
\end{thm}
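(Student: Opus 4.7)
The strategy is to realize the peeling algorithm as a density-dependent Markov chain meeting the hypotheses of Theorem~\ref{thm:markovexit}, and then to specialize the conclusion there to the specific data provided by the hypergraph ensemble. This reduction is the purpose of the framework culminating in Proposition~\ref{lem:hyperasymptotic}. The chain tracks the residual hypergraph via a state vector $\z = (z_1, z_2, z_3, \ldots, z_L) \in \ints^L$ in which $z_1$ is the number of leaf hyperedges (those containing exactly one pending hypervertex), $z_2$ is the number of hyperedges with two pending hypervertices, and $z_j$ for $3 \leq j \leq L$ is the number of unpeeled hyperedges of size $j$. A peeling step removes a uniformly chosen leaf together with its containing hyperedge; the resulting transition kernel depends on $\z$ only through the fractions $\pp_0, \pp_1, \ss_j$, so it is density-dependent with rate function $\F$ as in (\ref{def:F}) and bounded increments, hence satisfies (\ref{ass:dd}).

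The 2-core is non-empty precisely when the peeling halts before exhausting all hyperedges, i.e.\ when the chain reaches the face $\{z_1 = 0\}$ strictly before the termination time $\ovn$. Accordingly I take $\sfD = \{\x \in \RR^L : \vec{\sfe}_1\cdot\x \geq 0\}$, so $\sfN = 1$ and $\sfmr_i = \vec{\sfe}_1$ for every critical time. Next I would verify that under the reparametrization $\zeta(\th) = \exp\{-\int_0^\th \eta(s)^{-1}\did s\}$ the fluid-limit solution $\y(\cdot)$ of (\ref{eq:ODE}) satisfies $y_1(\th) = \eta(\th)[\sfV(\zeta(\th))/\rho_c + \log(1-\zeta(\th))]$ at $\rho = \rho_c$. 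The defining property of $\rho_c$ then makes this quantity non-negative with zero set exactly $Z$, so the critical times are $\thic = \theta(\ze_{N-i})$ (the reordering reflecting that $\theta \mapsto \zeta$ is decreasing). The hypothesis (\ref{ass:ensemble1}) is precisely the strict convexity statement $\did^2 y_1/\did\th^2(\thic) > 0$, i.e.\ the strict tangency required in (\ref{tangencyconditions}).

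The initial state $\zrn$ of the peeling chain is obtained from $\cG(n,\fl n\rho\fr,\sfv_n)$ by sampling an i.i.d.\ sequence of hyperedge-endpoints uniformly over $[n]$, so the degree distribution of the hypervertices is approximately Poisson with parameter $\gamma = \mu/\rho_c$ at $\rho = \rho_c$. Standard Hoeffding and Berry--Esseen estimates for the induced multinomial vectors yield (\ref{ass:exp})-(\ref{neargaussian}), with the covariance matrix $Q_0$ computed from the Poisson-approximated degree law: the $(1,1), (1,2), (2,2)$ entries record the Gaussian covariances of the initial leaf and two-pending hyperedge counts at degree-density $\gamma$, while $Q_{0,ij}=0$ for $i,j\geq 3$ since the multiplicities $\sfv_n(j)$ are deterministic up to $O(1)$. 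The one-step covariance $\G(\x)$ is a direct conditional-variance computation: the removed hyperedge has size-distribution $\ss_j$ and its non-leaf pending-hypervertex count is distributed (given the global structure) as a zero-truncated Poisson$(\la)$, which produces the combinations $[\sfR''(1) - \sfR'(1)^2]$ in the $G_{11}, G_{12}, G_{22}$ entries.

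With these ingredients in place, Theorem~\ref{thm:markovexit} applies to $\z^{(n)}$ started at $\z_{n,\rho_n}$ with $\rho_n = \rho_c + rn^{-1/2}$. Since $\sfmr_i = \vec{\sfe}_1$ for all $i$, the general formulas (\ref{def:bSinthm}-\ref{def:bLam}) collapse onto the scalar $(1,1)$-entries of $\Q(\thic)\,\B_{\thic}(\thjc)^\dag$ and $\G(\y(\thic))$ together with $[\did^2 y_1/\did\th^2(\thic)]^{-1/3}$, matching the specialised $\aa_h, \bb_h$ of the statement. The main technical obstacle is that the peeling kernel is \emph{not} defined outside $\sfD^\circ$ (the algorithm halts at $z_1 = 0$), which is exactly the difficulty signalled in the introduction and handled through the auxiliary kernel $\hW(\cdot|\x) = W(\cdot|\sfK(\x))$: by Lemma~\ref{lem:solutionproperties} and the fluid-limit estimates, the extended and original chains coincide up to the first exit time with probability $1 - O(n^{-\eta})$, so the exit probability for the extended chain furnished by Theorem~\ref{thm:markovexit} equals $P_\nocore$ to within the stated error. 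A secondary check, non-singularity of $\bS$, reduces in this setting to linear independence of the scalar kernels $[\B_{\thic}(\cdot)]_{1,1}$, which follows from strict positivity of $G_{11}$ along the trajectory together with standard invertibility properties of the linear propagator $\B$.
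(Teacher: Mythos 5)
Your route is the one the paper takes: cast peeling as a density-dependent chain, verify the hypotheses of Theorem \ref{thm:markovexit} via the framework of Section \ref{sec:hypergraph}, read the critical times off the explicit fluid-limit solution, and establish positive definiteness of $\bS$ from the triangular structure of the associated Gaussian vector together with $G_{11}>0$. Most ingredients are correctly identified. There is, however, a genuine gap at the very first step of the reduction. You assert that the $2$-core is non-empty precisely when the chain reaches $\{z_1=0\}$ before the horizon $\ovn=\floor{n\ovth}$, and you take $\sfD$ to be the half-space $\{x_1\geq 0\}$. Neither survives scrutiny: Theorem \ref{thm:markovexit} needs the kernel to be density dependent with a smooth limit on all of a \emph{non-degenerate polytope} $\sfD$, and Proposition \ref{lem:hyperasymptotic} delivers this only on $\dom(\eps)$, which keeps $x_K$ and $d(\vn)-\sum_i ix_i$ bounded away from zero. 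The fluid trajectory violates these constraints as $\th\uparrow 1$ (everything gets peeled), so one must stop at some $\ovth<1$, i.e.\ after roughly $n(1-\eta_1)$ peeling steps. The exit problem on $[0,\ovn]$ therefore detects only cores of \emph{linear} size: the events $\{\mbox{no $2$-core}\}$ and $\{\min_{\ta\leq\ovn}z_1(\ta)>0\}$ differ by the event that a non-empty core with fewer than $\eta_1 n$ hyperedges exists, and this probability must itself be shown to be $O(n^{-\eta})$. The paper closes this with a separate first-moment bound (Lemma \ref{lem:smallcores}): the expected number of stopping sets with fewer than $\ka_1 n$ hyperedges is $O(n^{1-l/2})$ with $l=\min\{j:v_{0,j}>0\}\geq 3$, hence $O(n^{-1/2})$. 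Without some such estimate your identification of $P_\nocore$ with the exit probability is unjustified.

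Two smaller points. First, the exact fluid-limit solution is $u_1(\th,\rho)=\sfV(\zeta(\th))\lsq\zeta(\th)-1+e^{-\sfV(\zeta(\th))/\rho}\rsq$ (Lemma \ref{lem:hypersolution}), not the logarithmic expression you propose; the two share the zero set $\{\sfV(\zeta)/\rhoc=-\log(1-\zeta)\}$ at $\rho=\rhoc$, but the tangency and curvature conditions (\ref{tangencyconditions}) must be checked on the actual solution, which is where the paper combines $\sfV'(\ze)=(1-\ze)^{-1}$ on $Z$ (from the tangency) with hypothesis (\ref{ass:ensemble1}). Second, in the configurational model $z_1$ counts degree-one hypervertices (c-nodes), not ``leaf hyperedges,'' and the strong initial-condition bound (\ref{neargaussian}) — a supremum over half-space preimages at rate $n^{-1/2}$ — is obtained in Lemma \ref{lem:hyperinitial} from a local CLT plus an Euler--MacLaurin comparison, not from a plain Berry--Esseen estimate.
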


\begin{remark}\label{rem:importance}
Theorem \ref{thm:hypergraph} should be compared to Theorem 1 of \cite{DM08}, where the hyper-edges are restricted to have the same common size $l\leq 3$.  For such hypergraphs it can be shown that the set $Z$ has only one element, i.e. the critical solution $\y$ has only one critical time, whereas for irregular ensembles like $\cG(n,m,\sfv_n)$ there can be several 
critical times. Thus the general Theorem \ref{thm:markovexit} is necessary for the proof of Theorem \ref{thm:hypergraph}. See Remark \ref{rem:final} for more details on the analysis of other initial ensembles.

The framework of Section \ref{sec:hypergraph} and Theorem \ref{thm:hypergraph} have important consequences for coding theory, which are explained here briefly (cf. \cite{RU08} for a comprehensive account). In the analysis of low-density parity-check (LDPC) codes over the binary erasure channel (BEC), there appears a natural hypergraph structure, where the ratio $\rho = m/n$ is the transmission rate (equal to the noise level for the channel) and $n$ is the transmitted message length. Peeling algorithm on this hypergraph corresponds to a certain decoding strategy and the presence of a 2-core implies a decoding failure. Design of LDPC codes translates into the construction of hypergraph ensembles with maximal $\rho$ such that the probability of decoding failure vanishes as $n$, the message length, tends to infinity. Such $\rho$ is bounded above by the theoretical limit for the transmission rate over the channel, the so-called Shannon capacity, and the LDPC codes have been shown to achieve rates close to this limit.

Even though the probability of failure vanishes as $n\rightarrow\infty$ for any $\rho<\rhoc$ (some critical threshold), it is still significant for message lengths encountered in practice, which could be $n = 1,000$ to $10,000$, and so ultimately better estimates of this probability are necessary for successful code design. This motivated the conjecture in \cite{AMRU04} about the finite size scaling behavior at this phase transition, which was subsequently rigorously proved in \cite{DM08} for the ensembles of regular hypergraphs.

The overall goal, however, is to design finite LDPC code ensembles that minimize the decoding error (cf. \cite{AMU07} for a systematic approach to this task). This optimization gives rise to irregular hypergraph ensembles (that is with hyperedges of different sizes) with core size (proportion of hyperedges contained in the core) concentrating near several distinct values as $n$ tends to infinity. This corresponds to the case of $|Z| = N > 1$ and is a consequence of the so-called matching condition (cf. \cite[Section 3.14.4]{RU08}), which translates into an integral condition for the generating function $\sfV$, in terms of which the set $Z$ is defined. It is thus necessary to deal both with irregular ensembles and with the case of $N > 1$, both of which goals are accomplished here.
\end{remark}

This paper is organized as follows. The proof of Theorem \ref{thm:markovexit} spans Sections \ref{sec:asymptotickernel}-\ref{sec:weak}. Section \ref{sec:asymptotickernel} is devoted to preliminary results dealing with the loss of the density dependent property of the kernel at the boundary of $\sfD$ and establishing properties of the associated differential equations.
Section \ref{sec:coupling} is an adaptation of the coupling arguments from \cite{DM08}, culminating with a coupling to a Brownian motion with quadratic drift. Section \ref{sec:weak} contains the new arguments used to handle multiple critical times. Finally, Section \ref{sec:hypergraph} develops the framework for analyzing hypergraph 2-cores, where Proposition \ref{lem:hyperasymptotic} is of particular interest, and concludes with the proof of Theorem \ref{thm:hypergraph}.

\section{Asymptotic kernel and the associated vector field}\label{sec:asymptotickernel}

This section is devoted to preliminary results about the properties of the system (\ref{ode:mean}-\ref{ode:var}) needed for our analysis. Throughout the paper we denote the open interior of a set $\sfA \subset \RR^d$ by $\sfA^\circ$ and define the $\eps$-thickening of that set by $\sfA^\eps \equiv \{\x\in\RR^d : \dist(\x,\sfA) < \eps \}$. We let $\sfd(\z) \equiv \min_i (\sfnr_i\cdot\z - \sfe_i)$ and it is easy to check that this equals $\dist(\z,\RR^d\backslash\sfD)$ or $-\dist(\z,\sfD)$ depending on whether $\z$ lies in $\sfD$ or not. Greek letters $\ta$ and $\si$ are non-negative integers denoting discrete time, whereas $t$ and $s$ are non-negative real numbers denoting continuous time. Finally, $\{c_i\}$ are positive constants, possibly distinct in each proof.

For $\x \in \sfD$ let $\A(\x)$ be a matrix of partial derivatives of $\F$ at $\x \in \sfD$, i.e. $[\A]_{ab}=\partial_{x_b} F_a(\x)$, which is indeed well-defined for all $\x \in \sfD$ as $\F$ certainly admits a differentiable extension to $\sfD^\delta$. Also, define $\G(\x)$ to be the covariance of the increment $\D$ with distribution $W(\cdot|\x)$, i.e. the p.s.d. symmetric matrix
\begin{eqnarray}
\G(\x) \equiv \sum_{\D} \D \,(\D)^\dag W(\D|\x) - \F(\x)\, \F(\x)^\dag.\label{def:G}
\end{eqnarray}

Let $\sfK: \RR^d \rightarrow \sfD$ denote the projection onto $\sfD$, which was assumed to be convex and non-degenerate. We extend $\x \mapsto W(\cdot|\x)$ as well as $\F$, $\A$, and $\G$ to the whole of $\RR^d$ defining $W(\cdot|\x) \equiv W(\cdot|\sfK(\x))$, $\F(\x) \equiv \F(\sfK(\x))$, etc. Note that $W(\cdot|n^{-1}\,\cdot)$ is now a transition kernel on $\RR^d$ and that for all $\x\in\RR^d$ we still have $\F(\x)$ and $\G(\x)$ as the mean and variance, respectively, of the increment $\D$ with distribution $W(\cdot|\x)$. However, $\A(\x)$ is now only a matrix of derivatives of $\F$ for $\x \in \sfD^\circ$. Finally, $\F$, $\A$, and $\G$ are Lipschitz continuous on $\RR^d$, as they were on $\sfD$.

For each $\rho$ with $|\rho|<\mala$, we are interested in the following system of ordinary differential equations
\begin{eqnarray}
\frac{\did \y}{\did \th}(\th) &=& \F(\y(\th)) 
\qquad \th\geq 0, \;\y(0) = \y_\rho  \label{ode:mean} \\
\frac{\did \B_\ze}{\did \th}(\th) &=& \A(\y(\th)) \B_\zeta(\th)
\qquad \th\geq\zeta\geq 0, \;\B_\ze(\ze) = \ind \label{ode:b}\\
\frac{\did \Q}{\did \th}(\th) &=& \G(\y(\th)) + \A(\y(\th)) \Q(\th) + \Q(\th) \A(\y(\th))^\dag
\qquad \th\geq 0, \;\Q(0) = \Q_\rho. \label{ode:var}
\end{eqnarray}
The solution to equation (\ref{ode:mean}) is the fluid limit mentioned in the introduction and the solution to (\ref{ode:var}) gives the covariance matrix of the diffusive limit. The matrix $\B_\ze(\th)$ is the matrix of partial derivatives of the function $\y$ at time $\th$ with respect to the initial condition specified 
at time $\ze$.

\subsection{Properties of the ODEs}
The following lemma allows us deal with the fact that $W(\D| \cdot)$ is not differentiable at $\x \in \pip\sfD$.

\begin{lemma}\label{lem:solutionproperties}
The following hold with $\cA \equiv [0,\ovth]\times(-\mala,\mala)$. 
\begin{enumerate} 
\item There is a unique solution $(\th,\rho) \mapsto \y(\th,\rho)$, $(\th,\rho) \mapsto \B_{\ze}(\th,\rho)$ and $(\th,\rho) \mapsto \Q(\th,\rho)$ of (\ref{ode:mean}-\ref{ode:var}) on $\cA$. These functions are Lipschitz continuous there and moreover there is a uniform (for $(\th,\rho)\in\cA$) bound on the operator norms $||\B_{\ze}(\th,\rho)||$, $||\B_{\ze}(\th,\rho)^{-1}||$ and $||\Q(\th,\rho)||$.

\item There are positive constants $\ka_1$ and $\ka_2$ such that for all $(\th,\rho) \in \cA$
\begin{eqnarray}
\l| \y(\th,\rho) - \y(\th,0) - \rho\, \B_0(\th,0)
\left. \frac{\did\y_\rho}{\did\rho} \right|_{\rho=0}\r| 
\leq \ka_1|\rho|^2 + \ka_2\mu_\rho |\rho|, \label{eq:piprho}
\end{eqnarray}
where $\mu_\rho$ is the Lebesgue measure of the set $\{s \in [0,\th]: \y(s,\rho) \notin \sfD\}$.  Hence, a fortiori, the following partial derivative exists
\begin{eqnarray}
\frac{\pip \y}{\pip \rho}(\th,0) \equiv \B_0(\th,0)\left.\frac{\did \y_\rho}{\did \rho}\right|_{\rho=0}
\end{eqnarray}
\end{enumerate}
\end{lemma}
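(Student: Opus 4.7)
For Part (i) I would invoke standard ODE theory: the extended $\F$, $\A$, $\G$ are Lipschitz on $\RR^d$ (via the projection $\sfK$), and $\rho\mapsto \y_\rho$, $\rho\mapsto \Q_\rho$ are Lipschitz by assumption, so Picard--Lindel\"of yields existence, uniqueness, and Lipschitz continuity of the solutions on $\cA$. Uniform bounds on $\|\B_\ze(\th,\rho)\|$ and $\|\Q(\th,\rho)\|$ follow from Gronwall applied to (\ref{ode:b}) and (\ref{ode:var}) respectively; invertibility of $\B_\ze$ together with a matching bound on $\|\B_\ze^{-1}\|$ comes either from Liouville's formula $\det \B_\ze(\th) = \exp\bigl(\int_\ze^\th \Tr\A(\y(s,\rho))\,\did s\bigr)$ combined with the upper bound on $\|\B_\ze\|$, or directly from Gronwall applied to the linear ODE $\did\B_\ze^{-1}/\did\th = -\B_\ze^{-1}\A(\y(\th,\rho))$.

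For Part (ii) I would introduce the residual
$$\Psi(\th) \equiv \y(\th,\rho) - \y(\th,0) - \rho\,\B_0(\th,0)\left.\frac{\did\y_\rho}{\did\rho}\right|_{\rho=0}$$
and differentiate, using (\ref{ode:mean}) and (\ref{ode:b}), to obtain
$$\frac{\did \Psi}{\did \th} = \bigl[\F(\y(\th,\rho)) - \F(\y(\th,0))\bigr] - \rho\,\A(\y(\th,0))\,\B_0(\th,0)\left.\frac{\did \y_\rho}{\did \rho}\right|_{\rho=0}.$$
The key step is to split $[0,\th]$ into the \emph{good set} $G=\{s: \y(s,\rho)\in\sfD\}$ and its complement $B$, whose Lebesgue measure is $\mu_\rho$. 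On $G$ both $\y(s,0)$ and $\y(s,\rho)$ lie in $\sfD$, so the assumed $C^{1,1}$ regularity of $\F$ on $\sfD$, together with the Lipschitz bound $\|\y(s,\rho)-\y(s,0)\|=O(|\rho|)$ from Part (i), yields the Taylor expansion $\F(\y(s,\rho))-\F(\y(s,0)) = \A(\y(s,0))(\y(s,\rho)-\y(s,0)) + O(\rho^2)$; after combining with the subtracted term, the integrand on $G$ rearranges to $\A(\y(s,0))\Psi(s) + O(\rho^2)$. On $B$ the projection-based extension of $\F$ is merely Lipschitz, so each bracketed piece is crudely $O(|\rho|)$ and the contribution after integration is $O(|\rho|\mu_\rho)$. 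Gronwall's inequality applied to
$$\|\Psi(\th)\| \leq C_1|\rho|^2 + C_2|\rho|\mu_\rho + \int_0^\th \|\A(\y(s,0))\|\,\|\Psi(s)\|\,\did s$$
then delivers (\ref{eq:piprho}), noting that the initial datum $\Psi(0) = \y_\rho - \y_0 - \rho\,\left.\did\y_\rho/\did\rho\right|_{\rho=0}$ is itself $O(\rho^2)$ by the assumed $C^2$ regularity of $\rho \mapsto \y_\rho$. The existence of $\frac{\pip \y}{\pip \rho}(\th,0)$ is then immediate upon dividing (\ref{eq:piprho}) by $|\rho|$, since continuity of $\y$ in $\rho$ together with the tangency structure (\ref{tangencyconditions}) forces $\mu_\rho\to 0$ as $\rho\to 0$ (in fact $\mu_\rho = O(\sqrt{|\rho|})$ near each $\thic$, so the error is $o(|\rho|)$).

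The main obstacle is the loss of differentiability of $\F$ on $\pip\sfD$: the classical smooth-dependence-on-parameters theorem needs $C^{1,1}$ regularity in a tubular neighborhood of the reference trajectory, which fails at precisely the critical times $\thic$ where perturbations $\y(\cdot,\rho)$ may briefly exit $\sfD$. Isolating this defect cleanly into the $\mu_\rho|\rho|$ error term is what makes the lemma useful later in the paper, where it can be combined with problem-specific bounds on $\mu_\rho$.
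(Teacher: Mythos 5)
Your part (i) is essentially the paper's argument: Picard--Lindel\"of/Gronwall for existence, uniqueness and Lipschitz dependence, and the determinant identity $\did(\det\B_\ze)/\did\th = (\det\B_\ze)\,\Tr[\A(\y(\th,\rho))]$ (equivalently the ODE for $\B_\ze^{-1}$) for the uniform bound on $\|\B_\ze^{-1}\|$. For part (ii), however, you take a genuinely different route. The paper first constructs a twice-differentiable extension $\F_e$ of $\F$ from $\sfD$ to all of $\RR^d$, invokes the classical $C^2$-dependence-on-parameters theorem (Hartman) for the flow $\y_e$ of $\F_e$ to get the $\ka_1|\rho|^2$ Taylor bound, and then runs a \emph{separate} Gronwall comparison of $\y$ against $\y_e$ — using the identity $\F(\x)=\F_e(\sfK(\x))$ on the excursion set — to show $\|\y(\th,\rho)-\y_e(\th,\rho)\|\le C\mu_\rho|\rho|$; the two errors are then added. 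You instead run a single Gronwall argument directly on the residual $\Psi$, splitting the \emph{time axis} into the set where $\y(s,\rho)\in\sfD$ (where the $C^{1,1}$ regularity of $\F$ on the convex set $\sfD$ licenses the first-order Taylor expansion with $O(\rho^2)$ remainder) and its complement of measure $\mu_\rho$ (where only the Lipschitz bound is available, contributing $O(\mu_\rho|\rho|)$). This is correct: on the good set both $\y(s,0)$ and $\y(s,\rho)$ lie in $\sfD$, the segment joining them stays in $\sfD$ by convexity, and $\A(\y(s,0))$ is well defined up to the boundary by the regularity assumed of $W$; the crude bound on the bad set and the $O(\rho^2)$ initial datum close the Gronwall loop. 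Your approach buys a shorter, self-contained argument that avoids constructing the Whitney-type extension $\F_e$ and citing the smooth-dependence theorem; the paper's approach buys a cleaner conceptual separation of the two error sources into two independent comparisons. A further small plus: you actually justify the ``a fortiori'' claim by noting that the quadratic tangency in (\ref{tangencyconditions}) forces $\mu_\rho=O(\sqrt{|\rho|})$, hence an $o(|\rho|)$ total error, a point the paper leaves implicit.
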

\begin{proof} (a) Since $\F(\cdot)$ is bounded and Lipschitz continuous in $\sfD$ and since for any $|\rho| < \mala$  the inital condition $\y_\rho$ is in $\sfD^\circ$, it follows that the solution $\y(\cdot,\cdot)$ of (\ref{ode:mean}) exists and is unique in $\cA$. Applying Gronwall's lemma we can deduce that the solution to (\ref{ode:mean}) must be Lipschitz continuous with respect to its initial condition, so since $\rho\mapsto\y_p$ is differentiable, it also follows that $\y(\cdot,\cdot)$ is Lipschitz continuous in both coordinates.

Next, recall that $\A(\cdot)$ and $\G(\cdot)$ are Lipschitz continuous, so in view of the established properties of $\y(\cdot,\cdot)$  we can deduce the same for $(\th,\rho) \mapsto \A(\y(\th,\rho))$ and $(\th,\rho) \mapsto \G(\y(\th,\rho))$. Moreover, we have assumed $\rho \mapsto \Q_\rho$ to be Lipschitz continuous. It follows that the linear ODEs (\ref{ode:b}) and (\ref{ode:var}) have unique Lipschitz continuous and thus bounded solutions on $\cA$. In fact, suppressing the dependence on $\rho$, we have the explicit respresentation
\begin{eqnarray}
\Q(\th) = \B_0(\th)\Q(0)(\B_0(\th))^\dag + 
\int_0^{\th} \B_{\ze}(\th) \G(\y(\ze)) (\B_{\ze}(\th))^\dag \did \ze.\label{def:Q}
\end{eqnarray} 

It remains to be shown that $|| \B_\ze(\th,\rho)^{-1}||$ is uniformly bounded. This follows from the observation that ${\rm det}\, \B_\ze(\ze) = 1$, while
\[
\frac{\did ({\rm det} \B_\ze(\th))}{\did \th} = ({\rm det} \B_\ze(\th)) \;{\rm tr}[\A(\y(\th,\rho))]
\]
and the entries of $\A(\y(\th,\rho))$ are uniformly bounded on $\cA$.

(b) In order to establish (\ref{eq:piprho}), let $\F_e(\cdot)$ be a twice differentiable (with Lipschitz continuous derivatives) extension of $\F(\cdot)$ from $\sfD$ to $\RR^d$. It follows from an elementary result on differential equations (c.f. \cite[Corollary 4.1]{Har64}) that the differential equation
\begin{eqnarray}
\frac{\did \y}{\did \th}(\th) = \F_e(\y(\th)) \qquad \th\geq 0, \quad\y(0) = \y_\rho 
\end{eqnarray}
has a unique solution $\y_e$ with $(\th,\rho) \mapsto \y_e(\th, \rho)$ twice continuously differentiable on $\cA$ and moreover
\begin{eqnarray}
\frac{\pip \y_e}{\pip \rho}(\th,\rho) = \B_0(\th,\rho)\frac{\did \y_\rho}{\did \rho}
\end{eqnarray}
In particular, $\y(\th,0) = \y_e(\th,0)$, since we assumed that $\sfd(\y(\th,0)) \geq 0$ implying $\y(\th,0)\in\sfD$, and for some constant $\ka_1 > 0$ and all $\th\in[0,\ovth]$ Taylor expansion gives
\begin{eqnarray}
\l| \y_e(\th,\rho) - \y(\th,0) - \rho \frac{\pip \y_e}{\pip \rho}(\th,0) \r| 
\leq \ka_1 |\rho|^2. \label{eq:piprho2}
\end{eqnarray}
Now fix $\rho$ and $[s,u] \subset [0,\ovth]$. If $\y(\th,\rho) \in \sfD$ for all $\th \in [s,u]$, then $\F(\y(\th,\rho)) = \F_e(\y(\th,\rho))$ and Gronwall's lemma gives
\begin{eqnarray}
\l| \y(u,\rho) - \y_e(u,\rho)  - [\y(s,\rho) - \y_e(s,\rho)] \r| \leq
\l| \y(s,\rho) - \y_e(s,\rho) \r| e^{(u-s)c_2}
\end{eqnarray}
where $c_2>0$ is the Lipschitz constant for $\F_e$. On the other hand, we can also use the identity $\F(\x) = \F_e(\sfK(\x))$ to obtain
\begin{eqnarray*}
\l| \y(u,\rho) - \y_e(u,\rho)  - [\y(s,\rho) - \y_e(s,\rho)] \r| &\leq&
\l| \int_s^u \lsq\F_e(\sfK[\y(s',\rho)]) - \F_e(\y_e(s',\rho))\rsq \did s' \r| \\
&\leq& (u-s) c_2 \sup_{s'\in [s,u]} \l| \sfK[\y(s',\rho)]) - \y_e(s',\rho) \r| \\
&\leq& c_3 (u-s)|\rho| \,,
\end{eqnarray*}
where the last inequality follows by Lipschitz continuity of both $\y$ and $\y_e$, together with an application of the triangle inequality
\[
\l| \sfK[\y(s',\rho)]) - \y_e(s',\rho) \r| \leq
\l| \sfK[\y(s',\rho)]) - \sfK[\y(s',0)] \r| + \l| \y(s',0) - \y_e(s',\rho) \r|,
\]
and our assumption of $\y(s',0) \in \sfD$ for $s' \in [0,\ovth]$. Since none of the constants $c_i$ depend of $\rho$ or $[s,u]$, by continuity of $\th \mapsto \y(\th,\rho)$ it follows that for all $(\th,\rho) \in \cA$
\[
\l| \y(\th,\rho) - \y_e(\th,\rho) \r| \leq c_4e^{\ovth c_2} \mu_\rho|\rho|,
\]
which together with (\ref{eq:piprho2}) gives the inequality (\ref{eq:piprho}).
\end{proof} 

The following simple estimate will be used to approximate the mean and covariance matrix of the chain near the critical times.

\begin{lemma}\label{lem:figi}
Let $\thc \in (0,\ovth)$ and fix some positive integer $0 \leq \nu < \floor{n\thc}$. Then there exist positive constants $\ka_1$ and $\ka_2$ such that for any integer $\ta$ with $\nu \leq \ta \leq 2 \floor{n\thc} - \nu$
\begin{eqnarray}
\l| \sum_{\si=\nu}^{\ta-1}\F(\y(\si/n,\rho)) - \F_0(\ta -\nu)/n - 
\F_1 \int_{\nu-0.5}^{\ta -0.5}(s/n - \thc) \did s \r|
&\leq& \ka_1(\,|\nu/n - \thc|^2 + |\rho|\,)\,( \ta - \nu ) \label{eq:Fi}\\
\l| \sum_{\si = \nu}^{\ta-1} \G(\y(\si/n,\rho)) - \G_0(\ta - \nu) \r| &\leq&  
\ka_2(\,|\nu/n - \thc| + |\rho|\,)\,( \ta - \nu ) \label{eq:Gi}
\end{eqnarray}
where $\F_0 = \frac{\did \y}{\did \th}(\thc,0) = \F(\y(\thc,0))$,  
$\F_1 = \frac{\did^2 \y}{\did \th^2}(\thc,0) = \A(\y(\thc,0)) \F(\y(\thc,0)) $ and $\G_0 = \G(\y(\thc,0))$.
\end{lemma}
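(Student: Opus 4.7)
The plan is to Taylor-expand the maps $s \mapsto \F(\y(s,\rho))$ and $s \mapsto \G(\y(s,\rho))$ in $s$ around $\thc$ and in $\rho$ around $0$, then sum over $\si = \nu,\ldots,\ta-1$. The whole proof is a routine combination of the smoothness/Lipschitz properties supplied by Lemma \ref{lem:solutionproperties} with elementary bookkeeping.

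For (\ref{eq:Fi}), I would first observe that by hypothesis $\sfd(\y(\th,0)) \geq 0$ throughout $[0,\ovth]$ (vanishing only on the finite set $\Theta$), so the critical trajectory lies in $\sfD$. On $\sfD$ the extended vector field $\F$ coincides with the original and has Lipschitz derivative $\A$; combined with $\y(\cdot,0)\in C^2$ (with Lipschitz second derivative), this makes $s\mapsto \F(\y(s,0))$ a $C^1$ function of $s$ with Lipschitz derivative whose value at $\thc$ is precisely $\A(\y(\thc,0))\F(\y(\thc,0)) = \F_1$. A one-variable Taylor expansion therefore yields $\F(\y(s,0)) = \F_0 + \F_1(s-\thc) + O((s-\thc)^2)$. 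For general $\rho$, Lipschitz continuity of $\F$ on $\RR^d$ together with the Lipschitz dependence of $\y$ on $\rho$ supplied by Lemma \ref{lem:solutionproperties}(a) gives $|\F(\y(s,\rho)) - \F(\y(s,0))| \leq c|\rho|$, yielding the joint estimate
\[
\F(\y(s,\rho)) = \F_0 + \F_1(s-\thc) + R(s,\rho), \qquad |R(s,\rho)| \leq c_1\bigl[(s-\thc)^2 + |\rho|\bigr].
\]

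Summing this expansion for $\si=\nu,\ldots,\ta-1$ then finishes the argument via two observations. First, the integrand $s\mapsto s/n - \thc$ is linear, so the midpoint-rule identity $\sum_{\si=\nu}^{\ta-1}(\si/n - \thc) = \int_{\nu-1/2}^{\ta-1/2}(s/n-\thc)\did s$ is exact and no quadrature error is introduced when re-expressing the $\F_1$ contribution. Second, the symmetric constraint $\ta \leq 2\floor{n\thc}-\nu$ ensures $|\si/n - \thc| \leq |\nu/n - \thc| + O(1/n)$ for every $\si$ in the summation range, so the accumulated remainder is bounded by $c_2[(\nu/n - \thc)^2 + |\rho|](\ta-\nu)$, matching the right-hand side of (\ref{eq:Fi}).

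The bound (\ref{eq:Gi}) is simpler. Lipschitz continuity of $\G$ on $\RR^d$ (noted in Section \ref{sec:asymptotickernel}) and of $(\th,\rho)\mapsto \y(\th,\rho)$ on $\cA$ (from Lemma \ref{lem:solutionproperties}(a)) give $|\G(\y(s,\rho)) - \G_0| \leq c_3(|s-\thc| + |\rho|)$, and summing with the same symmetric-range estimate yields the stated bound. There is no genuine obstacle: the proof is a routine assembly of (i) a second-order Taylor expansion of $\F\circ \y$ (available precisely because the critical trajectory stays inside $\sfD$, where $\F$ is smooth, even though $\F$ loses differentiability across $\pip \sfD$ in general), (ii) the observation that the Riemann sum of a linear function equals its integral exactly, and (iii) the symmetry of the summation range around $n\thc$. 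The only place where one must be slightly careful is step (i), since if one had to consider perturbations $\y(\cdot,\rho)$ that leave $\sfD$ one would get only a first-order expansion; this is avoided by splitting off the $\rho$-dependence as a Lipschitz increment.
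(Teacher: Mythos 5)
Your proof is correct and follows essentially the same route as the paper's: Lipschitz continuity in $(\th,\rho)$ handles (\ref{eq:Gi}) and the $\rho$-increment in (\ref{eq:Fi}), a second-order Taylor expansion of $\frac{\did\y}{\did\th}(\cdot,0)=\F(\y(\cdot,0))$ about $\thc$ handles the $s$-dependence, and the exact midpoint-rule identity for constant and linear integrands together with the symmetric range $\nu\leq\ta\leq 2\floor{n\thc}-\nu$ (which gives $|\si/n-\thc|\leq|\nu/n-\thc|$) yields the stated bounds. The only cosmetic difference is that you derive the needed $C^2$-with-Lipschitz-second-derivative property of $\y(\cdot,0)$ from smoothness of $\F$ on $\sfD$ and the trajectory remaining in $\sfD$, whereas the paper simply invokes it as a standing assumption on the critical solution.
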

\begin{proof}
By the Lipschitz continuity of $(\th,\rho) \mapsto \y(\th,\rho)$, $\x \mapsto \F(\x)$ and $\x \mapsto \G(\x)$ it is immediate that
\begin{eqnarray*}
  \l| \F(\y(\si/n,\rho)) - \F(\y(\si/n,0)) \r| &\leq& c_0 |\rho| \\ 
  \l| \G(\y(\si/n,\rho)) - \G(\y(\th,0)) \r| &\leq& \ka_2(\,|\si/n - \th| + |\rho|\,)
\end{eqnarray*}
(where the second inequality is for the operator norm) from which (\ref{eq:Gi}) follows.
To obtain (\ref{eq:Fi}) recall that $\th \mapsto \y(\th,0)$ was assumed to be twice differentiable with Lipschitz continuous second derivative. Consequently the following Taylor expansion holds
\begin{eqnarray}
\l| \frac{\did \y}{\did \th}(\ze,0) - \frac{\did \y}{\did \th}(\thc,0) - (\ze - \thc) \frac{\did^2 \y}{\did \th^2}(\thc,0) \r| \leq c_1 |\ze - \thc|^2
\end{eqnarray}
and taking $\ka_1 = c_0 \vee c_1$ we have
\begin{eqnarray}
\l| \sum_{\si=\nu}^{\ta-1}\F(\y(\si/n,\rho)) - \sum_{\si=\nu}^{\ta-1} \lsq \F_0 - 
\F_1 (\si/n - \thc) \rsq \r|
\leq \ka_1(\,|\nu/n - \thc|^2 + |\rho|\,)\,( \ta - \nu ).
\end{eqnarray}
Observing that $f(\si) = \int_{\si-0.5}^{\si+0.5}f(s) \did s$ for $f(s) \equiv 1$ and 
$f(s) = s/n - \thc$ yields the desired conclusion.
\end{proof}



In the next lemma we establish certain discrete approximations to the solutions of the differential equations (\ref{ode:mean})-(\ref{ode:var}). These approximations will be related to the mean and covariance matrix of the auxiliary process to be introduced in Section \ref{sec:coupling}. 
Fix some $J_n \subset [0,n\ovth]$. Let $I_n \equiv [0,n\ovth] \backslash J_n$ and define the matrices $\tA_\ta \equiv \A(\ytnrho) \1_{\ta \in I_n}$. Further, let the matrix $\tB_{\si}^{\ta} \equiv \ind$ \;for $\si > \ta$ while for $0 \leq \si \leq \ta$ let
\[
\tB_{\si}^{\ta} \equiv   \ll \ind + \frac1n \tA_{\ta} \rr\cdots \ll \ind + \frac1n \tA_{\si} \rr.
\]
Finally, define the following sequences (as before $\si$ and $\ta$ are non-negative integers)
\begin{eqnarray}
\yst(\ta+1) &=& \yst(\ta) + \tA_\ta (\yst(\ta) - \ytnrho) + \F(\ytnrho)\;,\qquad  
\yst(0) = n\y_\rho\\ 
\nonumber\\
\Qst(\ta+1) &=& n \tB_0^\ta\, \Q_\rho ( \tB_0^\ta )^\dag  + 
\sum_{\si \in I_n\cap [0,\ta]} \tB_{\si+1}^\ta \, \G(\ysnrho) ( \tB_{\si+1}^\ta )^\dag, \qquad
\Qst(0) = n \tB_0^\ta\, \Q_\rho ( \tB_0^\ta )^\dag, \label{def:var}\qquad
\end{eqnarray}
where we suppressed the explicit dependce of $\yst$ and $\Qst$ on $\rho$. The following lemma is proved in Appendix \ref{app:A}.

\begin{lemma}\label{lem:meanvar}
There are positive constants $\ka_1$, $\ka_2$ and $\ka_3$ such that for all $\ta \in [0,n\ovth]$, $\ze \in [0, \ta/n]$ and $n$ large enough we have
\begin{eqnarray}
\l| n^{-1}\yst(\ta) -  \y(\ta/n, \rho) \r| &\leq& \ka_1 n^{-1} \label{eq:ystar} \\
\l| \tB_{ \ceil{n \ze} }^{\ta-1} - \B_\ze(\ta/n) \r| &\leq& \ka_2 (1 + |J_n|)n^{-1} \label{eq:tB} \\
\l| n^{-1}\Qst(\ta) - \Q(\ta/n,0) \r| &\leq& \ka_3 (1 + |J_n|)n^{-1}. \label{eq:Qstar}
\end{eqnarray}
Moreover, matrices $\{ \tB_{\si}^{\ta} : 0 \leq \si, \ta \leq n\ovth \}$ and their inverses are uniformly bounded in operator norm.
\end{lemma}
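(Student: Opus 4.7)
The plan is to view $\yst$, $\tB_\si^\ta$ and $\Qst$ as discrete Euler-type schemes for the ODEs (\ref{ode:mean})--(\ref{ode:var}) and to bound the discrepancies by discrete Gronwall arguments, taking care to account for the perturbation caused by zeroing $\tA_\ta$ on $J_n$.

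First, for (\ref{eq:ystar}), introduce the error $E(\ta) \equiv n^{-1}\yst(\ta) - \y(\ta/n,\rho)$. Lipschitz continuity of $\F$ makes $\y(\cdot,\rho)$ of class $C^{1,1}$ on $\cA$, so
\[
\y((\ta+1)/n,\rho) = \y(\ta/n,\rho) + \tfrac{1}{n}\F(\y(\ta/n,\rho)) + O(n^{-2})
\]
uniformly on $\cA$. Substituting this into the recursion defining $\yst$ leads to $E(\ta+1) = (\ind + \tfrac{1}{n}\tA_\ta) E(\ta) + O(n^{-2})$ with $E(0) = 0$. Uniform boundedness of $\tA_\ta$ (Lemma \ref{lem:solutionproperties}) then yields, by iteration, $|E(\ta)| \le n\ovth \cdot O(n^{-2})\cdot e^{c\ovth} = O(n^{-1})$ uniformly in $\ta\le n\ovth$; this bound is insensitive to $|J_n|$, since zeroing $\tA_\ta$ only reduces $\|\ind + \tfrac{1}{n}\tA_\ta\|$.

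For (\ref{eq:tB}), I would compare $\tB$ to the ``unbroken'' Euler product $\hat B_\si^\ta \equiv (\ind + \tfrac{1}{n}\A(\y(\ta/n,\rho)))\cdots(\ind + \tfrac{1}{n}\A(\y(\si/n,\rho)))$ that uses every index. A standard discrete Gronwall argument, combined with Lipschitz continuity of $(\th,\rho) \mapsto \A(\y(\th,\rho))$ on $\cA$, gives $\|\hat B_{\ceil{n\ze}}^{\ta-1} - \B_\ze(\ta/n)\| = O(n^{-1})$. The actual $\tB_\si^\ta$ is obtained from $\hat B_\si^\ta$ by replacing the factor $\ind + \tfrac{1}{n}\A(\y(r/n,\rho))$ by $\ind$ at each $r\in J_n$. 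Telescoping along these substitutions, each replacement perturbs the product by $O(n^{-1})$ in operator norm, being sandwiched between two uniformly bounded matrices; the $|J_n|$ contributions add to $O(|J_n|/n)$. Combining yields (\ref{eq:tB}). The uniform bound $\|\tB_\si^\ta\| \le e^{c\ovth}$ is immediate from $\prod(1 + c/n) \le e^{c\ovth}$, and for the inverse one uses $(\ind + \tfrac{1}{n}\tA_r)^{-1} = \ind - \tfrac{1}{n}\tA_r + O(n^{-2})$ to write $(\tB_\si^\ta)^{-1}$ as a product of the same form.

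Finally, for (\ref{eq:Qstar}) the explicit representation (\ref{def:Q}) identifies $\Q(\th,0)$ as the continuous-time analogue of $n^{-1}\Qst(\ta)$. Inserting (\ref{eq:tB}) into each $\tB$ factor contributes $O((1+|J_n|)n^{-1})$; Lipschitz continuity of the integrand lets the Riemann sum $\tfrac{1}{n}\sum_{\si\in [0,\ta)}\B_{\si/n}(\ta/n)\G(\y(\si/n,\rho))\B_{\si/n}(\ta/n)^\dag$ approximate its integral with error $O(n^{-1})$; the missing indices $\si\in J_n$ contribute an additional $O(|J_n|/n)$; and Lipschitz continuity in $\rho$ of $\Q_\rho$, $\G\circ\y$, and $\B_\ze$ allows replacement of $\rho$ by $0$ at a cost proportional to $|\rho|$, which is harmless in the regime of interest. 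I expect the main technical subtlety to be the additive (rather than multiplicative) accumulation of the $|J_n|$ correction in (\ref{eq:tB}): each omitted one-step factor differs from $\ind$ only by $O(n^{-1})$ and the surrounding factors are uniformly bounded, so a telescoping expansion along the ordered indices of $J_n$ reveals the additive structure.
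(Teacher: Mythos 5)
Your proposal is correct and follows essentially the same route as the paper: a discrete Gronwall/Euler-scheme error analysis for (\ref{eq:ystar}) and (\ref{eq:tB}) with the $J_n$ indices contributing additively (the paper runs a single recursion with an indicator term $\1_{\ta\in J_n}$ rather than telescoping through an intermediate ``unbroken'' product, but this is only a reorganization), followed by the matrix perturbation bound, Riemann-sum comparison with (\ref{def:Q}), and accounting for the $O(|J_n|)$ omitted terms to get (\ref{eq:Qstar}). Your remark that the replacement of $\rho$ by $0$ in (\ref{eq:Qstar}) costs $O(|\rho|)$ and is harmless only in the regime of interest (where $|\rho|$ is dominated by $|J_n|/n$) is an accurate reading of how the lemma is actually used.
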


\subsection{Fluid limit with error bounds}
The following is concentration inequality for a Markov chain with a fluid limit. Its proof isd a straightforward generalization of the proof of \cite[Lemma 5.2]{DM08}. For similar results for more general discrete-time processes see \cite{Wor95} and for pure-jump Markov process see \cite{DN08}.

\begin{lemma}\label{prop:ode}
Let $\z(\cdot)$ be a Markov chain started at some initial distribution $\z(0)$ with kernel $W(\cdot|n^{-1}\cdot)$. Then there exist constants $\ka_1$, $\ka_2$ and $\ka_3$, depending only on the constants $\sfk_i$ and $\ovth$, such that for all $n$ and any $0 \leq \tau \leq \floor{n\ovth}$: 
\begin{enumerate}
\item $\z(\cdot)$ is exponentially concentrated round its mean
\begin{eqnarray}\label{est:expcon}
 \Pnr \{ \l| \ztau - \Enr\ztau \r| \geq r \}  \leq \ka_0 \exp \ll {-\frac{r^2}{\ka_1 n}} \rr, 
\end{eqnarray}
\item and the mean is close to the solution $\y(\cdot)$ of the ODE (\ref{ode:mean})
\begin{eqnarray}
\l| \Enr \ztau - n \y(\tau/n) \r| \leq \ka_2 \l|\Enr\z(0) - n\y(0) \r| + \ka_3 \sqrt{n \log n}
\end{eqnarray}
\end{enumerate}
\end{lemma}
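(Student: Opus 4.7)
The plan is to adapt the martingale argument from \cite[Lemma 5.2]{DM08} to the general density-dependent setting considered here.

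\textbf{Part (a).} Fix $\ta \leq \floor{n\ovth}$ and apply the Doob martingale decomposition to $\z(\ta)$: write $\z(\ta) - \Enr\z(\ta) = \sum_{\si=0}^{\ta-1} D_\si$ with $D_\si \equiv \Enr[\z(\ta)\,|\,\cF_{\si+1}] - \Enr[\z(\ta)\,|\,\cF_\si]$. Introduce $g(\z, \si) \equiv \Enr[\z(\ta)\,|\,\z(\si) = \z]$, so that the Markov property gives $D_\si = g(\z(\si+1), \si+1) - \Enr[g(\z(\si+1), \si+1)\,|\,\cF_\si]$. The main goal is to establish a uniform Lipschitz bound on $g(\cdot,\si)$ with constant $L_g$ depending only on $\sfk_i$ and $\ovth$; combined with the bounded-increment assumption $||\z(\si+1) - \z(\si)|| \leq \sfk_1$, this will yield $||D_\si|| \leq 2L_g \sfk_1$, after which a coordinatewise vector Azuma--Hoeffding inequality delivers the sub-Gaussian estimate (\ref{est:expcon}).

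To obtain the Lipschitz bound on $g$, I would use a total-variation coupling: the $C^2$ dependence of $W(\cdot|\x)$ on $\x$ assumed in the setup implies $||W(\cdot|n^{-1}\z) - W(\cdot|n^{-1}\z')||_{\sTV} \leq C_1 n^{-1}||\z - \z'||$, so two copies of the chain starting at $\z$ and $\z'$ can be coupled to make the same transition with probability at least $1 - C_1 n^{-1}||\z - \z'||$ at each step, and differ by at most $2\sfk_1$ otherwise. Iterating this coupling over $\ta - \si \leq \floor{n\ovth}$ steps, one obtains $\Enr||\z(\ta) - \z'(\ta)|| \leq (1 + C_2 n^{-1})^{\ta - \si}||\z - \z'|| \leq e^{C_2\ovth}||\z - \z'||$, giving $L_g \leq e^{C_2\ovth}$ independently of $n$.

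\textbf{Part (b).} Set $e_\ta \equiv \Enr\z(\ta) - n\y(\ta/n)$. The Markov property and the $C^2$ smoothness of $\y$ give
\begin{eqnarray*}
e_{\ta+1} - e_\ta &=& \Enr\F(n^{-1}\z(\ta)) - \F(\y(\ta/n)) + O(n^{-1})\,.
\end{eqnarray*}
Splitting $\Enr\F(n^{-1}\z(\ta)) - \F(\y(\ta/n))$ as $[\Enr\F(n^{-1}\z(\ta)) - \F(n^{-1}\Enr\z(\ta))] + [\F(n^{-1}\Enr\z(\ta)) - \F(\y(\ta/n))]$, Lipschitz continuity of $\F$ (extended to $\RR^d$ by projection onto $\sfD$) bounds the second bracket by $C_3 n^{-1}||e_\ta||$. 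For the first bracket, Lipschitz continuity together with part (a) gives $||\Enr\F(n^{-1}\z(\ta)) - \F(n^{-1}\Enr\z(\ta))|| \leq C_3 n^{-1}\Enr||\z(\ta) - \Enr\z(\ta)|| = O(n^{-1/2})$, where the bound $\Enr||\z(\ta) - \Enr\z(\ta)|| = O(\sqrt{n})$ follows by integrating the tail estimate from part (a). A discrete Gronwall argument applied to $||e_{\ta+1}|| \leq (1 + C_3 n^{-1})||e_\ta|| + O(n^{-1/2})$ then yields $||e_\ta|| \leq e^{C_3\ovth}||e_0|| + O(\sqrt{n})$ uniformly in $\ta \leq \floor{n\ovth}$, which is (slightly) stronger than the claimed $O(\sqrt{n \log n})$ bound.

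The main obstacle will be the uniform Lipschitz estimate on $g(\cdot,\si)$ in part (a). Although the total-variation coupling is a standard tool, iterating it over $\Theta(n)$ steps requires careful control of how the per-step Lipschitz constant of the kernel (coming from the smoothness of $W$ in its state variable) interacts with the bounded-increment assumption; the multiplicative factor $(1 + C_2 n^{-1})$ per step is precisely what keeps $L_g$ independent of $n$. Both the smoothness of $W$ in $\x$ and the compact support of $W$ in $\D$ are indispensable here.
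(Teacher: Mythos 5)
Your argument is correct and is essentially the paper's: the paper defers to \cite[Lemma 5.2]{DM08}, whose proof is exactly this combination of a Doob martingale decomposition with a coupling-based Lipschitz bound on $g(\cdot,\si)$ for part (a), and a Gronwall comparison of $\Enr\z(\ta)$ with $n\y(\ta/n)$ for part (b); your per-step factor $(1+C_2n^{-1})$ and the $O(\sqrt{n})$ refinement in (b) are both as expected. One small point needs attention: when $\z(0)$ is random, your telescoping sum equals $\z(\ta)-\Enr[\z(\ta)\,|\,\cF_0]=\z(\ta)-g(\z(0),0)$ rather than $\z(\ta)-\Enr\z(\ta)$, and the residual term $g(\z(0),0)-\Enr g(\z(0),0)$ is not a bounded martingale increment, so Azuma does not absorb it; it must be controlled separately via the Lipschitz property of $g(\cdot,0)$ together with the assumed sub-Gaussian concentration (\ref{ass:exp}) of the initial state $\zrn$, which yields a tail of the same $\exp(-r^2/(\ka n))$ form and leaves the statement intact.
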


\section{Coupling arguments}\label{sec:coupling}

In this section we construct three couplings that allow us to approximate the distribution of the $\min_{\ta\leq\ovn}\sfd(\z(\ta))$ where $\z(\cdot)$ is the Markov chain of Section \ref{sec:introexit}. The methods of proof owe a substantial debt to those of \cite{DM08}.

We define an extension $\sfWR$ of the transition kernel $\sfWD$ by
\begin{equation}
\sfWR(\Dz|\z) \equiv \lbr \begin{array}{rcl} 
\sfWD (\Dz|\z)  && \mbox{for} \quad \z \in \sfS\cap(n\sfD^\circ) \\
W(\Dz| n^{-1}\z) && \mbox{otherwise} \end{array} \right. .
\end{equation}
It follows, that when started at the same initial distribution, the Markov chain with kernel $\sfWR$ and the one with kernel $\sfWD$ coincide  until the first time they reach the boundary of $n\sfD$. Furthermore, notice that our assumption (\ref{ass:dd}) gives us the following estimate for the total variation distance
\begin{eqnarray}
\l| \sfWR(\cdot|\z) - W(\cdot | n^{-1}\zp) \r|_{\sTV} 
\leq \sfk_2n^{-1+\maly} + \sfk_7 n^{-1}||\z - \zp|| \label{eq:wclose}
\end{eqnarray}
where $\sfk_7$ is the Lipschitz constant for $W$. The following lemma allows us to reduce the  original problem to the exit problem for the Markov chain with kernel $W(\cdot|n^{-1}\cdot)$.

\begin{lemma} \label{lem:originalcoupling}
Let $\z(\cdot)$ and $\zp(\cdot)$ be Markov chains in $\RR^d$ with transition kernels $\sfWR(\cdot|\cdot)$ and $W(\cdot|n^{-1}\cdot)$ respectively and with common initial distribution $\z(0)=\zp(0)$. Then there exists a coupling $(\z(\cdot),\zp(\cdot))$ and positive constants $\ka_1$ and $\ka_2$, depending only on $\ovth$ and the constants $\sfk_i$, such that for all $r>0$
and any time $\ovth > 0$
\begin{eqnarray}
\Pn\lbr \sup_{\ta \,\leq\, \ovn} \l| \ztau - \zp(\ta)\r| > r n^{\maly} \rbr 
\leq \exp\{n^{\maly} (\ka_1 - \ka_2 r)\},
\end{eqnarray}
where $\ovn \equiv \floor{n \ovth}$.
\end{lemma}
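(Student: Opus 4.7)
The plan is to construct the coupling inductively by using the maximal coupling of the one-step kernels at each time, and to control the discrepancy $X_\ta := \|\ztau - \zp(\ta)\|$ by counting coupling failures. Given $(\z(\ta), \zp(\ta))$, draw the pair of next increments from the maximal coupling of $\sfWR(\cdot\mid \z(\ta))$ and $W(\cdot\mid n^{-1}\zp(\ta))$ and let $D_\ta\in\{0,1\}$ be the indicator that the two increments disagree. On $\{D_\ta=0\}$ the increments coincide and $X_{\ta+1}=X_\ta$; on $\{D_\ta=1\}$ the bounded-increment assumption (both $\sfWR$ and $W$ are supported on $\{\|\D\|\leq \sfk_1\}$) gives $X_{\ta+1}\leq X_\ta + 2\sfk_1$. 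Since $X_0=0$, this yields $X_\ta \leq 2\sfk_1\,\tilde N_\ta$ with $\tilde N_\ta := \sum_{\si<\ta}D_\si$, and by (\ref{eq:wclose}) the conditional disagreement probability satisfies
\[
p_\ta \;:=\; \P(D_\ta=1\mid \cF_\ta) \;\leq\; \sfk_2 n^{-1+\maly} + \sfk_7 n^{-1} X_\ta.
\]

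Introducing $T := \inf\{\ta\leq \ovn : X_\ta > rn^\maly\}$ and $T^* := T\wedge \ovn$, the event of interest is contained in $\{\tilde N_{T^*}\geq rn^\maly/(2\sfk_1)\}$, while on $\{T>\ta\}$ we have $p_\ta \leq q := (\sfk_2+\sfk_7 r)n^{-1+\maly}$. Writing $\tilde N_{T^*} = \sum_{\ta<\ovn} D_\ta\1_{\ta<T^*}$, the truncated one-step estimate
\[
\E\!\left[e^{\la D_\ta \1_{\ta<T^*}}\,\big|\,\cF_\ta\right] \;\leq\; \exp\!\bigl((e^\la-1)\,q\bigr)
\]
iterates via the tower property (using $\1_{\ta<T^*}\in\cF_\ta$) to give $\E[\exp(\la\tilde N_{T^*})]\leq \exp\bigl((e^\la-1)\,\ovth(\sfk_2+\sfk_7 r)\,n^\maly\bigr)$. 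Markov's inequality at $K=rn^\maly/(2\sfk_1)$ then yields
\[
\log \Pn\Bigl(\sup_{\ta\leq\ovn} X_\ta > rn^\maly\Bigr)\;\leq\; n^\maly\bigl[(e^\la-1)\ovth\sfk_2 + r\bigl((e^\la-1)\ovth\sfk_7 - \la/(2\sfk_1)\bigr)\bigr].
\]

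The main obstacle is that the variance proxy in the Chernoff exponent is itself linear in $r$, so a single optimization over $\la$ produces the target form $\exp\{n^\maly(\ka_1-\ka_2 r)\}$ with $\ka_2>0$ only when $2\sfk_1\sfk_7\ovth$ is sufficiently small. For a general $\ovth$ my plan is to partition $[0,\ovn]$ into $m$ sub-intervals of equal length $\ovn/m$, fix an increasing sequence $0=r_0<r_1<\cdots<r_m=r$ of thresholds, and run the preceding Chernoff argument block-by-block with stopping times $T_i := \inf\{\ta\in[t_{i-1},t_i] : X_\ta > r_i n^\maly\}$, so that the disagreement rate in block $i$ is controlled by $r_i$ rather than by $r$. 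Choosing $r_i-r_{i-1}$ so that the expected number of block-$i$ disagreements is a fixed fraction $1/C$ (with $C>1$ and $m$ large enough that $2C\sfk_1\sfk_7 \ovth/m < 1$) of the allowed budget $(r_i-r_{i-1})n^\maly/(2\sfk_1)$ keeps each block's Chernoff bound uniformly exponentially small, and a union bound over the $m$ blocks telescopes to the required $\exp\{-c\,rn^\maly\}$ decay, with $c>0$ depending only on $\ovth$ and the $\sfk_i$. For $r$ below a threshold $r_0=r_0(\ovth,\sfk_i)$ the stated bound is trivial upon taking $\ka_1\geq \ka_2 r_0$, so combining the two regimes gives the desired estimate for all $r>0$ with constants depending only on $\ovth$ and the $\sfk_i$.
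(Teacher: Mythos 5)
Your coupling, the total-variation estimate (\ref{eq:wclose}), the bound $X_\ta\leq 2\sfk_1\tilde N_\ta$ on the discrepancy in terms of the number of coupling failures, and the stopped Chernoff computation in steps 1--7 are all correct, and they match the paper's construction up to the point where the exponential moment must be controlled. You have also correctly diagnosed the real difficulty: freezing the disagreement rate at its worst-case value $q=(\sfk_2+\sfk_7 r)n^{-1+\maly}$ puts $r$ into the variance proxy, so the one-shot bound yields $\ka_2>0$ only when $2\sfk_1\sfk_7\ovth<1$.

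The gap is in the block argument that is supposed to remove this restriction. First, the prescription ``expected block-$i$ count $=\tfrac1C\times$ budget'' is the recursion $r_i-r_{i-1}=\tfrac{2C\sfk_1\ovth}{m}(\sfk_2+\sfk_7 r_i)$ with $r_0=0$, which \emph{determines} $r_m=\tfrac{\sfk_2}{\sfk_7}[(1-\al)^{-m}-1]$ for $\al=2C\sfk_1\sfk_7\ovth/m$; this is a fixed constant and cannot equal an arbitrary $r$. Second, and more seriously, the union bound over blocks does not ``telescope'': it gives $\sum_{i=1}^m e^{-c_0K_i}\geq e^{-c_0\min_i K_i}$, not $e^{-c_0\sum_i K_i}$, and under your prescription the smallest budget is $K_1=\Theta(n^\maly)$ \emph{independently of} $r$, so the resulting bound never decays like $e^{-\ka_2 rn^\maly}$ as $r\to\infty$ and the stated inequality fails for large $r$. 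The scheme can be repaired by taking the thresholds geometric, e.g. $r_i=(1-2\al)^{m-i}r$, so that \emph{every} increment satisfies both $r_i-r_{i-1}\geq\al(r_i+\sfk_2/\sfk_7)$ (for $r$ above a constant $r_*(\ovth,\sfk_i)$, the small-$r$ regime being absorbed into $\ka_1$) and $r_i-r_{i-1}\geq c(\ovth,\sfk_i)\,r$; then $\min_iK_i\geq c' rn^\maly$ and the union bound does give $m\,e^{-c''rn^\maly}$. The paper instead follows \cite{DM08}: one bounds $\E[e^{\la_\ta Z(\ta)}]$ directly for $Z(\ta)=\sup_{\si\leq\ta}\|\z(\si)-\zp(\si)\|$ with a slowly decreasing tilting parameter satisfying $\la_\ta-\la_{\ta+1}=(e^{2\sfk_1\la_{\ta+1}}-1)\sfk_7 n^{-1}$; the associated ODE $\dot\la=-c(e^\la-1)$ never reaches $0$ in finite time, so $\la_{\ovn}=\ka_2>0$ for every $\ovth$, and no restriction on $2\sfk_1\sfk_7\ovth$ arises because the self-excitation is kept multiplicative inside the exponential moment rather than being bounded by its worst case over the whole horizon.
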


\begin{proof}
We couple $\z(\cdot)$ and $\zp(\cdot)$ by first setting $\z(0)=\zp(0)$ and then iteratively coupling for each $\ta \geq 1$ the increments $\D\z = \z(\ta) - \z(\ta-1)$ and $\D\zp = \zp(\ta)-\zp(\ta-1)$, so that 
\begin{eqnarray}
\Pn \{\Dz \neq \D\zp | \z,\zp \} = \l| \sfWR(\cdot|\z) - W(\cdot|n^{-1}\zp) \r|_{\sTV}.
\label{eq:wcoupling}
\end{eqnarray}
Define $D_\ta(\la) \equiv \En [e^{\la Z(\ta)}]$, where
$Z(\ta) \equiv \sup_{\si\leq\ta} ||\z(\si) - \zp(\si) ||$. It is enough that we prove the bound 
$D_\ovn(\ka_2) \leq \exp\{\ka_1 n^{\maly}\}$ for some positive $\ka_1$ and $\ka_2$. This proof follows like the proof of \cite[Lemma 5.1]{DM08}, where we take $\widetilde{c} = c_0n^\mala$ and allow the constants to depend on $\ovth$.
\end{proof}

As in the comments preceding Lemma \ref{lem:meanvar}, let $J_n \subset [0,n\ovth]$, define $I_n \equiv [0,n\ovth]\backslash J_n$, and recall the definitions of $\tA_\ta$ and $\tB_\si^\ta$.
We now suppose that $|J_n| \leq n^\beta$ for some $\beta \in (0,1)$, where $|\cdot|$ denotes the Lebesgue measure on $\RR$, and define the auxiliary process
\[ 
\zp(\ta+1) = \left\{ \D_{\ta} - \tA_{\ta}\y(\tn,\rho) \right\} + \tB_{\ta}^{\ta} \zp(\ta), \qquad \zp(0) = \zrn,
\]
where $\{\D_\ta \}_{\ta=0}^\infty$ are independent random variables with distributions $W(\cdot|\y(\tn,\rho))$. Let $\Paux$ denote the distribution of $\zp(\cdot)$.

Furthermore, suppose that there exist some positive constants $c_0$ and $\gam$ and a slowly varying function $L(n)$, such that for $\z(\cdot)$, being the chain with kernel $W(\cdot|n^{-1}\cdot)$ and $\z(0)=\zrn$, we have
\begin{eqnarray}
  \Pnr \lbr \sup_{\ta \leq \ovn}||\z(\ta) - n \y(\tn,\rho)|| \geq c_0 n^\gam L(n) \rbr \leq \frac{1}{n}. \label{assum:estimate}
\end{eqnarray}
Assume also, that for some $c_1<c_0$ we have $\sfd(y(\th,\rho)) \geq c_1n^\gam L(n)$ uniformly in $\{\th : n\th \in I_n\}$ and $|\rho| < \mala$. Then the following holds.

\begin{propo}\label{prop:general}
For any $\delta > \max\{ \gam/2, 2\gam - 1, \beta +\gam - 1 \}$ there exist finite positive constants $\ka_1$ and $\ka_2$ (depending only on the constants $\sfk_i$, $c_0$ and $c_1$ above) and a coupling of the process $\z(\cdot)$, with kernel $W(\cdot|n^{-1}\cdot)$ and initial distribution $\zrn$, and 
$\zp(\cdot)$ with law $\Paux$, such that
\begin{eqnarray}
\Pnr \lbr \sup_{0 \leq \ta \leq \ovn} \l| \ztau - \zp(\ta) \r| \geq \ka_1 n^\delta \rbr \leq \frac{\ka_2}{n}
\end{eqnarray}
for all $|\rho| < \mala$ and $n$.
\end{propo}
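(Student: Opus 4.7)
Build a step-by-step maximal coupling of the increments. Starting from $\z(0) = \zp(0) = \zrn$, at each step $\ta$ draw $\D_\ta$ with law $W(\cdot|\ytnrho)$ (this suffices to define $\zp(\ta+1)$), and couple the increment $\Dz_\ta \equiv \z(\ta+1) - \z(\ta)$ to $\D_\ta$ so that
\[
\Pnr\{\Dz_\ta \neq \D_\ta \mid \cF_\ta\} \;=\; \l| W(\cdot|n^{-1}\z(\ta)) - W(\cdot|\ytnrho) \r|_{\sTV} \;\leq\; c\, n^{-1}\,\l| \z(\ta) - n\ytnrho \r|,
\]
where the inequality uses the in-total-variation Lipschitz continuity of $\x \mapsto W(\cdot|\x)$ furnished by assumption~(ii). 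The marginal law of $\z(\cdot)$ under the coupling is the prescribed one, and both $\Dz_\ta$ and $\D_\ta$ are deterministically bounded by $\sfk_1$.

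\textbf{Error recursion.} Setting $E(\ta) \equiv \z(\ta) - \zp(\ta)$, a direct computation from the definition of $\zp$ yields
\[
E(\ta+1) \;=\; (\ind + n^{-1}\tA_\ta)\,E(\ta) + \xi_\ta\,,\qquad \xi_\ta \equiv (\Dz_\ta-\D_\ta) - \tA_\ta(n^{-1}\z(\ta) - \ytnrho),
\]
and $E(0) = 0$ unrolls to $E(\ta) = \sum_{\si=0}^{\ta-1} \tB_{\si+1}^{\ta-1}\,\xi_\si$, with the $\tB_{\si+1}^{\ta-1}$ of uniformly bounded operator norm by Lemma~\ref{lem:meanvar}. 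I would then decompose $\xi_\si = M_\si + r_\si$ into its martingale-difference part $M_\si \equiv \xi_\si - \E[\xi_\si\mid\cF_\si]$ and its conditional-mean drift $r_\si$, and control each separately on the good event
\[
\dom \;\equiv\; \Big\{\,\sup_{\ta\leq\ovn}\l|\,\z(\ta)-n\ytnrho\,\r| \leq c_0 n^\gam L(n)\,\Big\},
\]
whose complement has probability at most $1/n$ by (\ref{assum:estimate}).

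\textbf{Three error sources and main obstacle.} For $\ta \in I_n$, the identity $\tA_\ta = \A(\ytnrho)$ makes $r_\ta = \F(n^{-1}\z(\ta)) - \F(\ytnrho) - \A(\ytnrho)(n^{-1}\z(\ta)-\ytnrho)$ a genuine second-order Taylor remainder; the hypothesis keeping $\ytnrho$ in the region where $\F$ is twice differentiable then gives $\l|r_\ta\r| \leq c\,(n^{-1}\l|\z(\ta)-n\y\r|)^2 \leq c\,n^{2\gam-2}L(n)^2$ on $\dom$. For $\ta \in J_n$ one has $\tA_\ta=0$, leaving only the Lipschitz bound $\l|r_\ta\r| \leq c\,n^{\gam-1}L(n)$; with $|J_n|\leq n^\beta$ the total drift contribution is then of order $n^{2\gam-1}L(n)^2 + n^{\beta+\gam-1}L(n)$. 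The martingale increments $M_\si$ are uniformly bounded and have conditional variance at most $c\,n^{-1+\gam}L(n)$ (the coupling fails with that probability, producing an $O(1)$ discrepancy), so an Azuma--Hoeffding/Freedman inequality applied to the martingale \emph{stopped} at the first exit of $\z(\cdot)$ from $\dom$ yields $\sup_{\ta\leq\ovn}\l|\sum_{\si<\ta} \tB_{\si+1}^{\ta-1} M_\si\r| = O(n^{\gam/2}L(n)^{1/2}(\log n)^{1/2})$ outside a further event of probability $1/n$; the stopping is essential, since the variance bound on $M_\si$ holds only on $\dom$ and is not a predictable almost-sure bound, and this is the main subtlety. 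Slow variation of $L$ then absorbs the logs into $n^\delta$ for any $\delta$ exceeding $\max\{\gam/2,\,2\gam-1,\,\beta+\gam-1\}$, and uniformity of $\ka_1,\ka_2$ in $\rho\in(-\mala,\mala)$ is inherited from the uniform bounds on the flow in Lemma~\ref{lem:solutionproperties}.
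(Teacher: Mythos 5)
Your proposal is correct and follows essentially the same route as the paper: the same maximal coupling of increments, the same restriction to the good event via (\ref{assum:estimate}), a Doob-type split of the error into a second-order Taylor drift (quadratic on $I_n$, linear on $J_n$) and a martingale with variance proxy $n^{\gam-1}L(n)$, and the same exponent bookkeeping. The only cosmetic difference is that the paper premultiplies by $(\tB_0^{s-1})^{-1}$ so the martingale part is literally a martingale (your weighted sum $\sum_\si \tB_{\si+1}^{\ta-1}M_\si$ needs that same normalization before Freedman/Azuma applies), and it invokes \cite[Lemma 5.6]{DM08} in place of a generic Freedman inequality.
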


\begin{proof}
Observe that we can couple $\z$ and $\zp$ by taking $\zp(0) = \z(0)$ and coupling the increments so that
\begin{eqnarray}
\Pnr (\D\ztau \neq \D_\ta | \calF_\ta ) = \l| W(\cdot|n^{-1}\ztau) -  W(\cdot|\ytn) \r|_{\sTV},
\end{eqnarray}
where as before $\D\ztau \equiv \z(\ta+1) - \ztau$ and $\calF_\ta$ is the $\si$-algebra generated by $\{\z(\si),\zp(\si): \si \leq \ta\}$.

We will now show that it is enough to consider the coupling only until the first time the process $\z(\cdot)$ and the solution $\y(\cdot)$ (suppressing dependence on $\rho$) are more than $c_0 n^\gam L(n)$ apart. Indeed let $\ta_\ast \leq n$ denote the first value of $\ta$ such that 
$||\ztau - n\ytn|| > c_0 n^\gam L(n)$. Then by assumption (\ref{assum:estimate}) we have
\begin{eqnarray}
\Pnr \{ \ta_\ast \leq \ovn \} \leq n^{-1} \,,
\end{eqnarray}
and it follows that to prove the statement of the proposition, it is enough to show that
\begin{eqnarray}
\sup_{\ta \leq \ovn} \Pnr \{ \ta_\ast \leq \ovn \;,\;  \l| \ztau - \zp(\ta) \r| \geq \ka_1 n^\delta \} \leq n^{-2} \,.
\end{eqnarray}
This is achieved by separately bounding the $\cF_s$-martingale $\{Z_s\}$ and the predictable process $\{V_s\}$ in the Doob's decomposition of the adapted process $N_s \equiv (\tB_0^{s-1})^{-1} (\zp(s) - \z(s)) = Z_s + V_s$, where $Z_0 = V_0 = 0$ and
\begin{eqnarray*}
\D V_{s+1} &\equiv& V_{s+1}-V_s = \En [N_{s+1}-N_s | \cF_s] = (\tB_0^{s-1})^{-1} \R(n^{-1}\z(s),\y(s/n),s) \\
\D Z_{s+1} &\equiv& Z_{s+1}-Z_s = (\tB_0^{s-1})^{-1} \{\D_s - \Dz(s) - \En[\D_s-\Dz(s) | \cF_s] \}
\end{eqnarray*}
for
\[
\R(\xp,\x,s) \equiv \F(\x) + \ind_{s \in I_n}\A(\x) [\xp -x] - \F(\xp) \, .
\]
Since $\A(\cdot)$ is Lipschitz continuous and uniformly bounded on $\RR^d$ and since it is the matrix of derivatives of $\F$ at every $\x \in\sfD^\circ$, we have
\begin{eqnarray*}
||\R(\xp,\x,s)|| &\leq& c_2 ||\xp - \x ||^2 \quad \mbox{for} \quad s\in I_n\;\;\;\mbox{and}\;\;\;\x,\;\xp \in \sfD^\circ,\\
||\R(\xp,\x,\th)|| &\leq& c_3 ||\xp - \x || \;\quad \mbox{otherwise}.
\end{eqnarray*}
Now observe that by the assumption on $\sfd(\y(\cdot))$ we have $\y(s/n)$ and $n^{-1}\z(s)$ in $\sfD$ for $s \in I_n$ and $n$ large enough as long as $\tast < n\ovth$. Thus by the bounds on $\R$
\begin{eqnarray*}
||\D V_s|| \leq \ind_{s \in I_n} c_3 ||n^{-1}z(s) - \y(s/n) ||^2 + \ind_{s \in J_n}c_4 ||n^{-1}z(s) - \y(s/n) ||
\end{eqnarray*}
leading to $||V_s|| \leq c_3 |I_n| (n^{\gam-1} L(n))^2 + c_4|J_n| (n^{\gam-1} L(n))$.

To bound $||Z_s||$ we use \cite[Lemma 5.6]{DM08} with $U_s \equiv \D Z_s$ as follows. Throughout we take advantage of the uniform bound on $||(\tB_0^{s-1})^{-1}||$. Since both $\D_s$ and $\D \z(s)$ have uniformly bounded support, we have $||U_s|| \leq 4 c_5$. Moreover on $s < \ta_\ast$
\begin{eqnarray*}
||\En[\D^\ast_s | \cF_s] || \leq c_5 \Pn(\D^\ast_s \neq 0 | \cF_s) \leq c_5 ||n^{-1}\z(s) - \y(s/n)|| \leq c_5 n^{\gam-1}L(n)
\end{eqnarray*}
implying
\begin{eqnarray*}
\Pn(||U_s|| > c_5 L(n)/n |\cF_s) \leq  \Pn(\D^\ast_s \neq 0 | \cF_s) \leq  c_5 n^{\gam-1}L(n) \;.
\end{eqnarray*}
Combining the two above estimates gives $||U_s|| \leq c_5 n^{\gam-1}L(n) + 4 c_5 \ind_{A}$ where $\Pn(A) \leq c_5 n^{\gam-1}L(n)$, making it clear that inequality \cite[(5.19)]{DM08} holds with $\Gamma = c_5 n^{\gam-1}L(n)$. Consequently, applying \cite[Lemma 5.6]{DM08} with $a$ (there) taken to be $n^\eta$ gives 
\[
\Pn (||Z_s|| \geq n^\eta) \leq 2d \exp\{ - n^{2\eta-\gam}L(n)^{-1}/(2d\ovth) \}
\]
as long as $n^\eta < n \Gamma d = d n^\gam L(n)$, i.e. $\eta < \gam$. To get a nontrivial bound we also need $2\eta > \gam$, so that $\Pn (||Z_s|| \geq n^\eta) \leq n^{-2}$ for $\eta \in (\gam/2,\gam)$ and $n$ large enough. This gives the required conclusion for any 
$\delta > \max\{ \gam/2, 2\gam - 1, \beta +\gam - 1 \}$.
\end{proof}


The process with distribution $\Paux$ has independent increments for $\ta\in J_n$, which allows us to couple it with Brownian motion. Let us fix $J_n = \cup_{i=1}^N J_n^i$, union of finitely many intervals $J_n^i = [\ltai,\uta^i]$ with $\lta^i = \floor{n\thic - n^\beta}$ and $\uta^i = \floor{n\thic + n^\beta}$. We define $\lta^0=\uta^0=0$ for notational convenience. Notice that $|J_n| \leq 2Nn^\beta$.

Define $X^i(t) \equiv \frac12 \Fi t^2 + \sqrt{ \Gi} W^i(t)$, where $\{W^i(\cdot)\}_{i=1}^N$ are independent doubly infinite standard Brownian motions conditioned on $W^i(0) = 0$ and where $\Fi = \frac{\did^2 \y}{\did \th^2}(\thic) = \A(\y(\thic,0)) \F(\y(\thic,0))$ and $\Gi = \G(\y(\thic,0))$. Finally, let
\begin{eqnarray*}
X_n^i(\ta) \equiv n^{1/3} [X^i(n^{-2/3}(\ta - 0.5 - n\thic)) - 
X^i(n^{-2/3}(\ltai - 0.5 - n\thic)) ].
\end{eqnarray*}

\begin{lemma}\label{lem:brownian}
Let $\beta\in (2/3,1)$ and $|\rho| \leq n^{\beta'-1}$ for $\beta' < 2\beta-1$. Then for any  $\delta > 3\beta -2$ there exist finite positive constants $\ka_1$ and $\ka_2$ and a coupling of the process $\zp(\cdot)$ with law $\Paux$ and the Gaussian processes $\lbr X_n^i(\cdot) \rbr_{i=1}^N $, such that for each $i = 1 \ldots N$ we have

\[ \Pnr \lbr \sup_{t \in J^i_n} \ln \, \sfmr_i \cdot\lbr \zp(\floor{t}) - \zp(\ltai) \rbr - X_n^i(t) \rn \geq \ka_1 \,n^\delta \rbr \leq \frac{\ka_2}{n}
\]
\end{lemma}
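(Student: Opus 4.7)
The plan is to work interval by interval. Since the $J_n^i$ are disjoint and $\tA_\ta = 0$, $\tB_\ta^\ta = \ind$ for $\ta \in J_n$, the process $\zp$ has independent (though not identically distributed) increments on each $J_n^i$, namely $\zp(\ta+1)-\zp(\ta)=\D_\ta$; so the couplings on different intervals can be constructed on independent factors of the probability space and the Brownian motions $W^i$ taken mutually independent. Fix $i$ and split
\[
\sfmr_i \cdot [\zp(\ta) - \zp(\ltai)] \;=\; \sum_{\si = \ltai}^{\ta - 1} \sfmr_i \cdot \F(\y(\si/n, \rho)) \;+\; \sum_{\si = \ltai}^{\ta - 1} \sfmr_i \cdot (\D_\si - \E \D_\si)
\]
into a deterministic drift and a centered, independent, uniformly-bounded sum; each piece will be matched against the corresponding component of $X_n^i$.

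For the drift, I would invoke Lemma \ref{lem:figi} with $\nu = \ltai$. The tangency condition $\sfmr_i \cdot \F_0 = \sfmr_i \cdot \tfrac{\did \y}{\did \th}(\thic, 0) = 0$ kills the linear-in-$(\ta-\nu)$ term, leaving exactly the parabolic drift $\tfrac{1}{2n}\,\sfmr_i \cdot \Fi\,[(\ta - 0.5 - n\thic)^2 - (\ltai - 0.5 - n\thic)^2]$, which is also $\sfmr_i\cdot \E X_n^i(\ta)$. The residual from (\ref{eq:Fi}) is of order $(|\ltai/n - \thic|^2 + |\rho|)(\ta - \ltai) = O(n^{3\beta - 2})$, using $|\ltai/n - \thic| \leq 2n^{\beta - 1}$, $|\rho| \leq n^{\beta' - 1}$ with $\beta' < 2\beta - 1$, and $\ta - \ltai \leq 2 n^\beta$.

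For the centered sum, the key tool is a Komlós--Major--Tusnády-type strong approximation for partial sums of independent, uniformly bounded summands (e.g.\ Sakhanenko's version), producing on the same probability space a standard Brownian motion $\tilde W^i$ with
\[
\Pnr \Bigl\{ \sup_{\ta \in J_n^i} \ln \sum_{\si = \ltai}^{\ta - 1} \sfmr_i \cdot (\D_\si - \E \D_\si) \;-\; \tilde W^i(T_\ta) \rn \geq c \log n \Bigr\} \;\leq\; n^{-2},
\]
where $T_\ta \equiv \sum_{\si = \ltai}^{\ta - 1} \sfmr_i\, \G(\y(\si/n, \rho))\, \sfmr_i$ is the natural variance clock. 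Applying (\ref{eq:Gi}) gives $|T_\ta - (\ta - \ltai)\,\sfmr_i\,\Gi\,\sfmr_i| = O(n^{2\beta - 1})$, so Lévy's modulus of continuity for $\tilde W^i$ bounds the time-change increment $|\tilde W^i(T_\ta) - \tilde W^i((\ta - \ltai)\,\sfmr_i\,\Gi\,\sfmr_i)|$ by a polylogarithmic multiple of $n^{\beta - 1/2}$ outside an event of probability $o(n^{-1})$. Extending $\tilde W^i/\sqrt{\sfmr_i\,\Gi\,\sfmr_i}$ to an independent $d$-dimensional standard Brownian motion $W^i$ (using orthogonal-complement coordinates chosen so that $t \mapsto \sfmr_i \cdot \sqrt{\Gi}\,W^i(t)$ reproduces $\tilde W^i$), and unfolding the rescaling $X_n^i(\ta) = n^{1/3}[X^i(n^{-2/3}(\ta - 0.5 - n\thic)) - X^i(n^{-2/3}(\ltai - 0.5 - n\thic))]$, identifies the two objects up to the errors above.

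The principal technical point is arranging a sharp ($\log n$-error) strong approximation for a non-i.i.d.\ yet uniformly-bounded independent sum; this is covered directly by Sakhanenko's theorem, so the remaining work is bookkeeping. Summing the three error contributions---$O(n^{3\beta - 2})$ from the drift expansion, $O(\log n)$ from the strong approximation, and a polylog multiple of $n^{\beta - 1/2}$ from the variance-clock mismatch---then yields the claimed bound $\ka_1 n^\delta$ for any $\delta > 3\beta - 2$, with the drift residual the binding term once $\beta$ is close enough to $1$.
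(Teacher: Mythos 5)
Your proposal is correct and follows essentially the same route as the paper: independent increments on each $J_n^i$, Sakhanenko's strong approximation for the centered bounded non-i.i.d.\ sum with $O(\log n)$ error, Lemma \ref{lem:figi} plus the tangency condition $\sfmr_i\cdot\F_0=0$ to match the drift up to $O(n^{3\beta-2})$, and a Gaussian estimate for the $O(n^{2\beta-1})$ variance-clock mismatch (the paper uses pointwise Gaussian tails with a union bound where you use L\'evy's modulus, an immaterial difference). Your closing caveat is also apt: the clock-mismatch term is of order $n^{\beta-1/2}$, which is dominated by $n^{3\beta-2}$ only for $\beta>3/4$ — the same implicit restriction present in the paper's own proof and harmless since the lemma is applied with $\beta\in(3/4,1)$.
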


\begin{proof}
In the following, we assume that $\det\G(\y(\thic)) \neq 0$ for all $i$, as for each $i$ with $\det\G(\y(\thic)) = 0$ the result is just a statement about the approximation of a Riemanian sum by an integral (this is clear from the last paragraph of the proof).

First, notice that the chain $\zp(\cdot)$ has independent increments for $\ta \in J^i_n$ for each $i=1,\ldots, N$ and moreover for $i\neq j$ the increments in $J^i_n$ are independent of those in $J^j_n$. Define random variables $\gamma^i_j \equiv \sfm_i\{ \zp(\ltai+j) - \zp(\ltai+j-1)\}$ for $1 \leq j \leq \uta^i-\ltai$, which are uniformly bounded by $\sfk_1$ and have mean and variance 
\begin{eqnarray}
\En \gamma^i_j = \sfmr_i\cdot\F(\ytn)  \qquad\mbox{and}\qquad  \Var \gamma^i_j = \sfm_i\G(\ytn)\sfmr_i\,.
\end{eqnarray}
We can then apply Sakhanenko's refinement \cite{Sak85} of the Hungarian construction to conclude that there exist independent real-valued Gaussian processes $\{b^i_n(\cdot)\}_{i=1}^N$, with  $b^i_n(\cdot)$ defined on $J^i_n$ and satisfying $b^i_n(\ltai) = 0$, such that
\begin{eqnarray}\label{ineq:sak}
\P_n \lbr \sup_{\ta \in J^i_n} \ln \, \sfmr_i\cdot \lbr \zp(\ta) - \zp(\ltai) \rbr - b_n^i(\ta) \rn \geq c_0 \,\log n \rbr \leq \frac{c_1}{n}.
\end{eqnarray}
The increments $b^i_n(\ta+1)-b^i_n(\ta)$, $\ta\in J^i_n$ have mean and variance equal to the mean and variance of $\gamma^i_{\ta - \ltai}$. The inequality (\ref{ineq:sak}) follows by applying Chebyshev's inequality to the result of \cite[Theorem A]{Sha95}).

Thus for $i = 1 \ldots N$ we have the respresentation
\begin{eqnarray*}
b_n^i(\ta) = \sum_{\si = \ltai}^{\ta-1} \sfmr_i\cdot\F (\ysn) + B^i \ll \sum_{\si = \ltai}^{\ta-1}\sfm_i \G(\ysn)\sfmr_i  \rr
\end{eqnarray*}
where $B^i(\cdot)$ are independent standard Brownian motions. Since the real-valued Gaussian processes $\{X^i_n(t),t\geq\ltai\}$ admit the representation
\begin{eqnarray*}
X^i_n(t) = \Fi\int_{\ltai - 0.5}^{t-0.5}(\si/n - \thic) \,\did\si + B^i(\Gi(t-\ltai/n)),
\end{eqnarray*}
to conclude the proof, it is enough to show that this coupling of $b_n^i(\cdot)$ and $X_n^i(\cdot)$ is such that all $n$ and $i$
\begin{eqnarray}\label{eq:discrete2cts}
\Pn \{ \sup_{t\in J^i_n} | b^i_n(\floor{t}) - X^i_n(t)| \geq 3 c_2 n^\delta \} \leq \frac{c_3}{n},
\end{eqnarray}
where the $t$ in the supremum takes values in $\RR$. Since $\{X^i_n(\ta+s) - X^i_n(\ta) \; : \; 
s \in [0,1]\}$ have the same law as $\{ B^i(\Gi s) + a_{n, \ta}(s) \; : \: s \in [0,1] \}$ for some non-random $a_{n,\ta}(s)$ bounded uniformly in $s\in [0,1]$, $n$ and $\ta \in J^i_n$, we get (\ref{eq:discrete2cts}) as soon as we show that
\begin{eqnarray}
\sup_{\ta\in J^i_n} \Pn \{| b^i_n(\ta) - X^i_n(\ta)| \geq 2 c_2 n^\delta \} \leq n^{-2}.
\end{eqnarray}
This follows by standard Gaussian tail estimates from the fact that $|\En b^i_n(\ta) -\En X^i_n(\ta)| \leq c_4 (n^{2(\beta-1)} + n^{\beta'-1})n^{\beta} \leq c_4 n^{3\beta - 2}$ and $\Var( b^i_n(\ta) - X^i_n(\ta) ) \leq c_5(n^{\beta-1} + n^{\beta'-1})n^{\beta}   = c_6 n^{2\beta -1}$, itself a consequence of  Lemma \ref{lem:figi} and the bounds on $|J_n^i|$ and $|\rho|$. 
\end{proof}


All the above couplings are collected in the following proposition.

\begin{propo}\label{prop:1}
Let $\beta\in (3/4,1)$ and $|\rho| \leq n^{\beta'-1}$ for $\beta' < 2\beta - 1$. Then there exists a positive constant $\ka$, such that for $\delta > (3\beta - 2)\vee\maly$
\begin{eqnarray*}
\Pn \lbr \bKn \geq n^\de \rbr - \frac{\ka}{n} \leq P(n,\rho) \leq 
\Pn \lbr \bKn \geq -n^\de \rbr +\frac{\ka}{n}
\end{eqnarray*}
where $[\bKn]_i\equiv \sfm_i\zp(\ltai) + \inf_{t\in J^i_n} X^i_n(t) - \sfd_i$ and $\zp(\cdot)$ is the process with law $\Paux$.
\end{propo}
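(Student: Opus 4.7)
The plan is to chain together the three couplings of Lemma \ref{lem:originalcoupling}, Proposition \ref{prop:general}, and Lemma \ref{lem:brownian}, verify that outside the critical windows $J_n^i$ the fluid limit keeps the chain safely inside $n\sfD$, and then read off the exit behaviour inside each $J_n^i$ directly from the Brownian approximation, whose infimum is precisely the $i$-th coordinate of $\bKn$.

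Concretely, I would first apply Lemma \ref{lem:originalcoupling} with $r$ chosen large enough (say $r=2\ka_1/\ka_2$) so that the exceptional event has probability $O(1/n)$, obtaining a coupling with $\sup_{\ta\le\ovn}|\z(\ta)-\bar\z(\ta)|\le c\,n^\maly$. Next, to invoke Proposition \ref{prop:general} I need $\sfd(\y(\th,\rho))\ge c_1 n^{\gam-1}L(n)$ for $n\th\in I_n$; by the quadratic vanishing of $\sfd\circ\y$ near each $\thic$ enforced by the tangency conditions (\ref{tangencyconditions}), and since $J_n^i$ excludes an interval of width $n^{\beta-1}$ about $\thic$, this requires $\gam\le 2\beta-1$, hence $\beta>3/4$ when one takes $\gam$ slightly above the value $1/2$ supplied by Lemma \ref{prop:ode}. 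The resulting coupling between $\bar\z$ and $\zp\sim\Paux$ has error $O(n^\delta)$ for any $\delta>\max\{\gam/2,\,2\gam-1,\,\beta+\gam-1\}$, which simplifies to $\delta>\beta-\tfrac12$ for the above choice of $\gam$. Finally, Lemma \ref{lem:brownian} couples $\zp$ to $X_n^i$ on each $J_n^i$ with error $O(n^\delta)$ for $\delta>3\beta-2$.

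On the intersection $E_n$ of these three good events, which satisfies $\Pnr(E_n)\ge 1-\ka/n$ by a union bound, one analyses $\min_{\ta\le\ovn}\sfd(\z(\ta))$ piece-by-piece. For $\ta$ with $\ta/n\notin\cup_i J_n^i/n$, the lower bound $\sfd(\y(\ta/n,\rho))\gtrsim n^{\gam-1}L(n)$ together with Lemma \ref{prop:ode} and the coupling errors (which are all $O(n^\delta)$ with $\delta<\gam$) keeps $\sfd(\z(\ta))$ strictly positive, so no exit can occur there. For $\ta\in J_n^i$, only the face indexed by $\sfmr_i$ is close to $\y$; the other faces remain bounded away by Lipschitz continuity of $\y$ and the tangency structure. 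Telescoping through the couplings,
\[
\sfmr_i\cdot\z(\ta)-\sfd_i \;=\; \sfmr_i\cdot\zp(\ltai) + X_n^i(\ta) - \sfd_i + O(n^\delta),
\]
whence $\inf_{\ta\in J_n^i}\bigl(\sfmr_i\cdot\z(\ta)-\sfd_i\bigr)=[\bKn]_i+O(n^\delta)$. Hence, on $E_n$, the inclusions $\{\bKn\ge n^\delta\}\subseteq\{\min_{\ta\le\ovn}\sfd(\z(\ta))>0\}\subseteq\{\bKn\ge -n^\delta\}$ hold, and taking probabilities while absorbing $\Pnr(E_n^c)$ into $\ka/n$ yields the claimed sandwich.

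The delicate step is the bookkeeping that reconciles the three coupling errors with the quadratic behaviour of $\sfd\circ\y$ at each $\thic$. The constraint $\beta>2/3$ is forced by Lemma \ref{lem:brownian} so that the rescaled Brownian approximation is non-degenerate, while $\beta>3/4$ is needed for the window $J_n^i$ to be wide enough that the quadratic safety margin on $\sfd(\y)$ dominates the noise from Proposition \ref{prop:general}; the final error threshold $\delta>(3\beta-2)\vee\maly$ is then the maximum of the Hungarian-construction error in Lemma \ref{lem:brownian} and the density-dependent loss at $\pip\sfD$ from Lemma \ref{lem:originalcoupling}, the intermediate coupling of Proposition \ref{prop:general} being strictly better in this regime.
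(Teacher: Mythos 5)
Your proposal is correct and follows essentially the same route as the paper: chain the three couplings (Lemma \ref{lem:originalcoupling}, then Proposition \ref{prop:general} with $\gam=1/2$ and $L(n)=\log n$, then Lemma \ref{lem:brownian}), use the fluid-limit concentration of Lemma \ref{prop:ode} together with the quadratic tangency to rule out exit for $\ta\in I_n$ (which is exactly where $\beta>3/4$ enters), and identify the exit event on each $J_n^i$ with the corresponding coordinate of $\bKn$ up to the $O(n^\delta)$ coupling error. The only cosmetic difference is that the paper takes $r=c_3\log n$ rather than a constant in Lemma \ref{lem:originalcoupling} (needed only in the borderline case $\maly=0$); otherwise your bookkeeping of the exponents $\gam/2$, $2\gam-1$, $\beta+\gam-1$, $3\beta-2$, $\maly$ agrees with the paper's.
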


\begin{proof}
Taking $r = c_3 \log n$ with some $c_3 > \ka_1/\ka_2$ in Lemma \ref{lem:originalcoupling} we see that there exists a coupling of the Markov chain $\z(\cdot)$ with kernel $W_n(\cdot|\cdot)$  and the chain $\zp'(\cdot)$ with kernel $W(\cdot|n^{-1}\cdot)$ (both started with distribution $\z(0)=\zp'(0)=\zrn$) such that with probability exceeding $1 - 1/n$ the two processes are at most $e_n \equiv c_3 n^{\maly} \log n$ apart until time 
$\tast \equiv \min\{\ta \geq 0:  \z(\ta) \leq 0 \}\wedge\floor{n\ovth}$ and therefore
\begin{eqnarray}
\Pn \lbr \min_{\ta \leq \floor{n\ovth}} \sfd(\zp'(\ta)) \geq e_n \rbr - \frac{1}{n} &\leq&
\Pn \lbr \min_{\ta \leq \tast} \sfd(\oz(\ta)) > 0 \rbr \nonumber\\
&\leq&
\Pn \lbr \min_{\ta \leq \floor{n\ovth}} \sfd(\zp'(\ta) \geq -e_n \rbr + \frac{1}{n} \label{eq:fromorigcoupling}
\end{eqnarray}

Next, observe that by the bound of Lemma \ref{prop:ode} and the assumption (\ref{ass:mean}) the events $\{\min_{\ta \in I_n}\sfd(\zp'(\ta)) \leq \pm e_n\}$ have probability smaller than $c_4/n$ (taking $c_3$ big enough) and hence we can restrict our attention to $\min_{\ta \in J_n}\sfd(\zp'(\ta)) $. Transfering the couplings in Proposition \ref{prop:general} (whose hypothesis is satisfied for the $J_n$ we are considering here with $\gam=1/2$ and $L(n)=\log n$) and Lemma \ref{lem:brownian} onto one probability space, it follows that
\begin{eqnarray*}
&&\Pn \lbr \sfm_i\zp(\ltai) + \inf_{t\in J^i_n} X^i_n(t) - \sfd_i \geq n^{\de},\; i\leq N \rbr - \frac{c_5}{n} \\
&\leq& \Pn \lbr \min_{\ta \in J_n} \sfd(\zp'(\ta)) \geq \pm e_n \rbr \\
&\leq& \Pn \lbr\sfm_i\zp(\ltai) + \inf_{t\in J^i_n} X^i_n(t) - \sfd_i \geq -n^{\de},\; i \leq N\rbr + \frac{c_5}{n} 
\end{eqnarray*}
where $\de > (3\beta - 2)\vee\maly$. Combining this with the estimate (\ref{eq:fromorigcoupling}) yields the conclusion of the proposition.
\end{proof}

\section{Weak convergence arguments}\label{sec:weak}
Having reduced our problem to that of estimating the probability that the random vector $\bKn$ lies in a convex set, we can now leverage a convenient decomposition of $\bKn$ into a sum of independent vectors, that will enable us to apply the central limit theorem and subsequently use the Taylor expansion for a smooth probability distribution function in $\RR^N$. Our methods here depart from those of \cite{DM08} and seem to simplify the argument leading to a somewhat tighter bound on the error.

To begin with, let us define the $\RR^N$ vectors with entries ($1\leq i \leq N$):
\begin{eqnarray}
[\bGn]_i &\equiv& \sfm_i \lbr\yst(\ltai) + \Enr \z(0) - n\y(0,\rho)\rbr + \frac{\Fi}{2}n^{1/3} s_n^2 - \sfd_i 
\end{eqnarray}
as well as symmetric matrices $\bSn$ whose entries are given by ($1\leq i\leq j \leq N$) 
\begin{eqnarray}
\relax [\bSn]_{i j} &\equiv& \sfm_i \,\Qst(\ltai)\, \tB(i,j)^\dag \, \sfmr_j 
\end{eqnarray}
where $\tB(i,j) = \tB_{\utai}^{\lta^j}$. In what follows, also $\B(i,j) = \tB_{\thic}(\thjc)$. 

\begin{lemma} \label{lem:sigmagamma}
Let $r\in \RR$ and take $\rho = \rho_n \equiv rn^{-1/2}$ in the definition of $\bSn$ and $\bGn$. Then there is a positive constant $\ka$ such that for all $\beta \in (3/4, 1)$ and for all $n$
\begin{eqnarray}
|| n^{-1}\bSn - \bS || &\leq& \ka n^{\beta -1} \label{var} \\
|| n^{-1/2}\bGn - r\bG || &\leq& \ka n^{3\beta - 5/2}  \label{mean}
\end{eqnarray}
\end{lemma}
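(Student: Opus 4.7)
The plan is to treat the two bounds (\ref{var}) and (\ref{mean}) separately; both reduce to Lemma \ref{lem:meanvar} combined with the continuity properties from Lemma \ref{lem:solutionproperties}.

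For (\ref{var}), apply Lemma \ref{lem:meanvar}(b),(c) at the relevant time arguments, using $|J_n|\leq 2Nn^\beta$, to get
\[
n^{-1}\Qst(\ltai) = \Q(\ltai/n,\rho) + O(n^{\beta-1}), \qquad
\tB(i,j) = \B_{\utai/n}(\lta^j/n) + O(n^{\beta-1}).
\]
Then Lipschitz continuity of $\theta\mapsto\Q(\theta,0)$ and $(\zeta,\theta)\mapsto\B_\zeta(\theta)$ on $\cA$ (Lemma \ref{lem:solutionproperties}(a)), together with $|\ltai/n-\thic|$, $|\utai/n-\thic|$, $|\lta^j/n-\thjc|=O(n^{\beta-1})$, shifts all arguments to the critical times at cost $O(n^{\beta-1})$. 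The $\rho$-dependence of $\Q$ contributes only $O(|\rho|)=O(n^{-1/2})$, which is absorbed since $\beta>1/2$. Uniform boundedness of $\B$ and $\Q$ on $\cA$ propagates the error through the product $\sfm_i\Qst(\ltai)\tB(i,j)^\dag\sfmr_j$.

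For (\ref{mean}), the strategy is Taylor expansion at $(\thic,0)$ together with the tangency conditions. First, Lemma \ref{lem:meanvar}(a) gives $\yst(\ltai)=n\y(\ltai/n,\rho)+O(1)$ and assumption (\ref{ass:mean}) gives $\sfm_i[\Enr\z(0)-n\y(0,\rho)]=O(1)$. Using $\sfd_i=n\sfmr_i\cdot\y(\thic,0)$ from the first equation in (\ref{tangencyconditions}) (applied at face $j(i)$), the problem reduces to controlling
\[
n\sfm_i[\y(\ltai/n,\rho)-\y(\thic,0)] + \tfrac{\Fi}{2}n^{1/3}s_n^2 - rn^{1/2}[\bG]_i.
\]
Expand $\y(\ltai/n,\rho)$ around $(\thic,0)$. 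In $\rho$: Lemma \ref{lem:solutionproperties}(b), with $\mu_\rho=0$ since $\y(\theta,0)\in\sfD$ in a neighborhood of $\thic$, yields the linear contribution $\rho\,\B_0(\ltai/n,0)\tfrac{\did\y_\rho}{\did\rho}|_{\rho=0}$. In $\theta$: the tangency identity $\sfmr_i\cdot\F(\y(\thic,0))=0$ kills the linear term, leaving $\tfrac{\Fi}{2}(\ltai/n-\thic)^2$ at second order. The quantity $s_n$, inherited from the Brownian time shift in Lemma \ref{lem:brownian}, is defined so that $\tfrac{\Fi}{2}n^{1/3}s_n^2$ matches the resulting $\tfrac{n\Fi}{2}(\ltai/n-\thic)^2$ term up to discretization error $O(n^{\beta-1})$. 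Using Lipschitz continuity of $\B_0(\cdot,0)$ to pass from $\ltai/n$ to $\thic$ (at cost $O(n|\rho|n^{\beta-1})=O(n^{\beta-1/2})$), the $\rho$-linear piece becomes exactly $rn^{1/2}[\bG]_i$. The remaining error contributions are the cubic $\theta$-remainder $O(n|\ltai/n-\thic|^3)=O(n^{3\beta-2})$; the mixed $\theta\rho$-remainder $O(n|\rho||\ltai/n-\thic|)=O(n^{\beta-1/2})$; and the quadratic $\rho$-remainder $O(n\rho^2)=O(1)$. For $\beta>3/4$ the cubic $O(n^{3\beta-2})$ dominates, and dividing by $n^{1/2}$ yields $O(n^{3\beta-5/2})$ as claimed.

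The main obstacle will be verifying the precise cancellation between the Taylor residual $\tfrac{n\Fi}{2}(\ltai/n-\thic)^2$ and $\tfrac{\Fi}{2}n^{1/3}s_n^2$: because $s_n$ is inherited from the Brownian coupling in Section \ref{sec:coupling}, one must carefully check the sign and discretization conventions so that the difference of these two expressions is indeed of the subleading order $O(n^{\beta-1})$ and not of the leading order $O(n^{2\beta-1})$. Once this cancellation is confirmed, the rest of the argument is routine bookkeeping of error exponents.
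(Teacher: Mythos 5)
Your proof is correct and follows essentially the same route as the paper: part (\ref{var}) is the identical expansion of the product $\Qst(\ltai)\tB(i,j)^\dag$ via Lemma \ref{lem:meanvar} and uniform operator-norm bounds, and for part (\ref{mean}) the paper likewise isolates the quadratic term at $\thic$ (using the discrete sum of Lemma \ref{lem:figi} applied to $\yst$ rather than a continuous Taylor expansion of $\y$ — an immaterial difference), matches it against $\tfrac12\Fi n^{1/3}s_n^2$, and supplies the $\rho$-linearization from Lemma \ref{lem:solutionproperties}(b); your flagged sign/discretization check is indeed the one delicate point. One small correction: the $\mu_\rho$ in (\ref{eq:piprho}) measures the time the \emph{perturbed} trajectory $\y(\cdot,\rho)$ spends outside $\sfD$, which need not vanish; but since $\sfd(\y(\th,0))$ grows quadratically away from the tangency points one gets $\mu_\rho=O(\sqrt{|\rho|})$, so the contribution $n\mu_\rho|\rho|\cdot n^{-1/2}=O(n^{-1/4})$ is still within $O(n^{3\beta-5/2})$ for $\beta\geq 3/4$ and the stated bound is unaffected.
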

\begin{proof}
We suppress the dependence of $\Q$ on $\rho$ and start by expanding the difference
\begin{eqnarray*}
 n^{-1}\Qst(\ltai)\tB(i,j)^\dag-\Q(\thic)\B(i,j)^\dag 
= n^{-1}\Qst(\ltai) \lsq \tB(i,j)-\B(i,j)\rsq^\dag + \lsq n^{-1}\Qst(\ltai)-\Q(\thic)\rsq \B(i,j)^\dag
\end{eqnarray*}
to see that (\ref{var}) follows from the estimates proved in Lemma \ref{lem:meanvar} and the uniform bound on the norms of all matrices involved. In turn to see (\ref{mean}), note that
the proof of Lemma \ref{lem:figi} gives the bound
\[
\ln \sfm_i\yst(\floor{n\thic}) - \sfm_i\yst(\ltai) + 
\Fi\sum_{\si = \ltai}^{\floor{n\thic}}(\si/n - \thic) \rn \leq c_0 n^{3\beta - 2},
\]
a simple calculation confirms that
\[
 \ln \frac{1}{2}n^{1/3} s_n^2 -  \sum_{\si = \ltai}^{\floor{n\thic}}(\si/n - \thic) \rn \leq \frac{1}{8}n^{-1},
\]
while it is shown in Lemma \ref{lem:meanvar} that
\[
\l| n^{-1}\yst(\floor{\thic n}) - \y(\thic,\rho) \r| \leq c_1 n^{-1},
\] 
and from Lemma \ref{lem:solutionproperties} we have
\begin{eqnarray*}
\ln \y(\thic,\rho) - \y(\thic,0) - \rho \frac{\pip \y}{\pip \rho}(\thic,0) \rn \leq c_2 n^{\beta - 1}|\rho|.
\end{eqnarray*}
Finally $|\Enr\z(0) - n\y(0,\rho)| \leq \sfk_5$ by (\ref{ass:mean}). Putting these estimates together, and using $\rho_n = r n^{-1/2}$ and $\sfd_i = \y(\thic,0)$, completes the proof, since for $\beta \in (3/4,1)$ we have $-1/2 < \beta - 1 < 3\beta - 5/2$.
\end{proof}

The next lemma is a bound on the distance between Gaussian distributions in terms of the differences between their means and covariances.

\begin{lemma} \label{lem:gaussianbound}
Let $\xx$ and $\yy$ be Gaussian vectors in $\RR^N$ with means $\aa$, $\bb$ and positive definite covariance matrice $\AA$, $\BB$ respectively. Let $\de > 0$ be a lower bound for the eigenvalues of both $\AA$ and $\BB$. Then there exists a constant $\ka$ depending only on $N$ and $\de$ such that
\begin{eqnarray*}
\sup_{C \in \cC} \ln \P ( \xx  \in C ) - \P ( \yy \in C ) \rn 
\leq \ka \lbr ||\aa - \bb|| + ||\AA - \BB|| \sqrt{\logp(||\AA - \BB||)} \rbr,
\end{eqnarray*}
where $\cC$ denotes the collection of Borel-measurable convex sets in $\RR^N$ and $\logp x \equiv |\log x|$ for $x>0$ and equals 0 otherwise.
\end{lemma}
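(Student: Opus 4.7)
The plan is to insert the intermediate Gaussian $N(\bb,\AA)$ and split via the triangle inequality
\[
|P(\xx\in C)-P(\yy\in C)|\leq \underbrace{|P(N(\aa,\AA)\in C)-P(N(\bb,\AA)\in C)|}_{=:T_1} \;+\; \underbrace{|P(N(\bb,\AA)\in C)-P(N(\bb,\BB)\in C)|}_{=:T_2},
\]
and then prove $\sup_{C\in\cC} T_1\leq \ka_1\|\aa-\bb\|$ and $\sup_{C\in\cC} T_2\leq \ka_2 \|\AA-\BB\|\sqrt{\logp\|\AA-\BB\|}$ separately. Both bounds rest on the same classical input: uniformly over convex $C\subset\RR^N$ and centred Gaussian densities $p_\Sigma$ with $\Sigma\geq \delta I$,
\[
p_\Sigma(\{x:\dist(x,\partial C)\leq r\})\leq c_{N,\delta}\,r\qquad \mbox{for all}\;r>0,
\]
which follows from the boundedness of the Gaussian surface measure of convex bodies, together with a change of variables by $\Sigma^{-1/2}$ to reduce to the standard Gaussian and absorb $\delta$.

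For $T_1$, I would write the difference as $\int_C [p_\AA(x-\aa)-p_\AA(x-\bb)]\did x$, interpolate along the segment from $\aa$ to $\bb$, use $|\nabla p_\AA(x)|\leq \delta^{-1}|x|\,p_\AA(x)$, and apply the divergence theorem on the translated convex region $C-\bb_s$, $\bb_s=(1-s)\bb+s\aa$. The resulting surface integrals of $p_\AA$ over translates of $\partial C$ are each $O(1)$ by the surface-measure estimate above, giving $T_1\leq \ka_1\|\aa-\bb\|$.

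For $T_2$, I would couple the two Gaussians on a common standard normal $\zz$ as $\xx' = \bb+\AA^{1/2}\zz$ and $\yy = \bb+\BB^{1/2}\zz$. Because the matrix square root is Lipschitz on $\{\Sigma\succeq \delta I\}$,
\[
\|\xx'-\yy\| \leq c_\delta M \|\zz\|,\qquad M:=\|\AA-\BB\|.
\]
The symmetric difference $\{\xx'\in C\}\triangle\{\yy\in C\}$ is contained in $\{\dist(\xx',\partial C)\leq \|\xx'-\yy\|\}$, so truncating at $\|\zz\|\leq R$ and using the boundary-layer bound together with a standard Gaussian tail estimate yields
\[
T_2 \leq P(\dist(\xx',\partial C)\leq c_\delta MR) + P(\|\zz\|>R) \leq c_\delta' MR + e^{-cR^2}.
\]
Optimising over $R$ with $R^2$ of order $\logp(1/M)$ balances the two contributions and gives $T_2\leq \ka_2 M\sqrt{\logp M}$ in the interesting regime of small $M$; for $M$ bounded away from $0$ the conclusion is trivial since $T_2\leq 1$, and the absolute constant $\ka$ can be enlarged accordingly.

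The only non-routine ingredient is the Gaussian surface-measure bound for convex bodies, which I would expect to be the main obstacle if one insisted on a fully self-contained proof, but it is classical and can be invoked as a black box. Everything else is a short interpolation-plus-coupling argument, and the $\sqrt{\logp}$ factor arises purely from the truncation step, not from any deeper Gaussian phenomenon.
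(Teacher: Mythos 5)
Your proposal is correct and follows essentially the same route as the paper: couple the two Gaussians through a common standard normal, use Lipschitz continuity of the matrix square root on $\{\Sigma \succeq \delta I\}$ (which the paper proves explicitly via a trace identity), bound the resulting boundary-layer event by the uniform Gaussian measure of convex shells (the paper cites \cite[Corollary 3.2]{BR76} for exactly this), and truncate $\|\zz\|$ at radius of order $\sqrt{\logp}$ to get the logarithmic factor. The only cosmetic difference is that you split off the mean shift by a triangle inequality and treat it by interpolation, whereas the paper absorbs $\|\aa-\bb\|$ directly into the shell width $r$ in a single coupling step.
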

\begin{proof} We couple $\xx$ and $\yy$ by taking $\xx = \SS \zz$ and $\yy = \TT \zz$,
where $\zz$ a standard normal Gaussian vector in $\RR^N$ and $\SS$ and $\TT$ are symmetric positive definite square-roots of $\AA$ and $\BB$ respectively, i.e. $\SS\SS^\dag = \AA$ and
$\TT\TT^\dag = \BB$. Let $\al = ||\SS - \TT||$ and let $r = \al \sqrt{3 \logp\al} + ||\aa - \bb||$. Observe that for $\al > 0$ we have
\begin{eqnarray}\label{eq:tail1}
 \P( ||\xx - \yy|| > r) \leq \P( ||\SS - \TT|| ||\zz|| + ||\aa - \bb|| > r)
\leq \P (||\zz|| > \sqrt{3 \logp\al}) \leq \ka_1 \al 
\end{eqnarray}
for some $\ka_1 \geq 0$, where we have used the standard Gaussian tail estimate $\P (||\zz|| \geq u) \leq p_{N-2}(u) e^{-u^2/2}$ for some polynomial $p_{N-2}$ of degree $(N-2)\vee 0$ with coefficients depending only on $N$. In turn for $\al = 0$ we have $r = ||\aa - \bb||$, so the probability in (\ref{eq:tail1}) is trivially 0. Next, conditioning on $||\xx - \yy|| \leq r$ and using monotonicity of measure gives
\begin{eqnarray}
\P(\xx \in C) \leq \P(\yy \in C^r) +\ka_1 \al \leq 
\P(\yy \in C) + \ka_2 r + \ka_1 \al
\end{eqnarray}
where $C^r \equiv \{x \in \RR^N: \dist(x, C) \leq r\}$ and the last inequality for some $\ka_2>0$ depending only on $N$ follows by a uniform bound on integrals over convex shells of width $r$ (see for example \cite[Corollary 3.2]{BR76}). Since by symmetry the above inequality also holds with $\xx$ and $\yy$ swapped, to complete the proof we only need to show that $||\SS - \TT||  \leq \ka_3 || \AA - \BB ||$ for some positive $\ka_3$ that may depend on $N$ and $\de$. As all norms in $\RR^{N^2}$ are equivalent, we will demonstrate this in Frobenius norm, defined by $||\AA||_\Fr \equiv \Tr (\AA^2)$. 

To this end, we write $(\SS-\TT)^2 = O L O^\dag$ and $(\SS+\TT)^2 = U D U^\dag$, where $O$, $U$ are orthogonal and $L$, $D$ are diagonal and positive. This is possible because $\SS$ and $\TT$ are positive definite and symmetric. Let the elements on the diagonal of $L$ be $l_i$ and on the diagonal of $D$ be $d_i$, where $d_i > 2\delta$, which follows from $(\SS+\TT)^2 = \AA + \BB + 2\SS\TT$, where all the matrices are positive definite and $\de$ is a positive lower bound on the eigenvalues of $\AA$ and $\BB$. Thus
\begin{eqnarray*}
\Tr[(\AA -\BB)^2] &=& \Tr[(\SS-\TT)^2(\SS+\TT)^2] = \Tr[OLO^\dag UDU^\dag]\\ 
&=&  \Tr[UOLO^\dag UD] = \sum d_il_i > 2\de \Tr[(\SS-\TT)^2]
\end{eqnarray*}
demonstrating that $||\SS -\TT||_\Fr \leq (2\de)^{-1}||\AA -\BB||_\Fr$ and concluding the proof.
\end{proof}

In preparation for the next proposition, we introduce a decomposition of $\bKn$ into a linear combination of random vectors $\bYn$, $\bZn$, $\bHn$, and $\bXn$ with entries ($1\leq i \leq N$)
\begin{eqnarray}
[\bYn]_i &\equiv& \sum_{0 \leq \si \leq \ltai, \si \notin J_n}\sfm_i \tB_{\si+1}^{\ltai} \lbr\D_\si - \F(\ysn)\rbr  \, \\
\relax [\bZn]_i &\equiv& \sfm_i\tB(0,i) (\z(0) - \Enr\z(0)) \\
\relax [\bHn]_i &\equiv& n^{-\beta/2}\sum_{\si \in J_n, \si \leq \ltai,}\sfm_i \tB_{\si+1}^{\ltai} \lbr\D_\si - \F(\ysn)\rbr \\
\relax [\bXn]_i &\equiv& \inf_{t \in [-s_n,u_n]}X^i(t) - X^i(-s_n) + \frac12 \Fi s_n^2,
\end{eqnarray}
where $s^i_n = n^{-2/3}[n^\beta+0.5]$ and $u_n = n^{-2/3}[n^\beta - 0.5]$, so that
\begin{eqnarray*}
\sfmr_i\cdot \zp(\ltai) = \sfmr_i\cdot\yst(\ltai) + [\bYn]_i + [\bZn]_i + n^{\beta/2}[\bHn]_i.
\end{eqnarray*}
and consequently
\begin{eqnarray}
\bKn = \bGn + \bYn + \bZn + n^{\beta/2}\bHn + n^{1/3}\bXn.
\end{eqnarray}
The merit of writing $\bKn$ like this lies in the fact that $\bYn, \bZn$, and $(\bHn, \bXn)$ are mutually independent and centered.

\begin{propo} \label{prop:2}
Suppose that $\bS$ is positive definite and fix $\beta_0 \in (3/4,1)$ and
$\de > (3\beta - 2)\vee\maly$. Then there exists a positive $\ka$, such that for any  $\beta \in (3/4,\beta_0)$, $\eta < \min\{1/2-\delta,1-\beta,3 \beta - 5/2\}$, $r \in \RR$, $\rho_n = rn^{-1/2}$ and $n$ sufficiently large we have
\begin{eqnarray*}
\ln \Pn\lbr  \bKn \leq \pm n^{\delta} \rbr - \Pn \lbr \bY + n^{-(1-\beta)/2} \bHn + n^{-1/6} \bXn \leq \bzer \rbr
\rn \leq \ka n^{-\eta},
\end{eqnarray*}
where  $\bY$ is an independent Gaussian vector in $\RR^N$ with mean $r \bG$ and covariance matrix $\bS$.
\end{propo}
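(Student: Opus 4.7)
The plan is to divide $\bKn \leq \pm n^\delta$ by $\sqrt n$ to obtain
\[
n^{-1/2}\bGn + n^{-1/2}(\bYn+\bZn) + n^{-(1-\beta)/2}\bHn + n^{-1/6}\bXn \leq \pm n^{\delta-1/2},
\]
and then replace the first two deterministic/random terms by the Gaussian $\bY$ with mean $r\bG$ and covariance $\bS$. Lemma \ref{lem:sigmagamma} already delivers $n^{-1/2}\bGn = r\bG + O(n^{3\beta-5/2})$, so the substantive task is to show that $n^{-1/2}(\bYn+\bZn)$ is well approximated in the convex-set metric by a centered Gaussian of covariance $\bS$. The four ingredients $\bYn$, $\bZn$, $\bHn$, $\bXn$ are mutually independent by construction (disjoint index supports of the increments, the initial-condition randomness, and the exogenous Brownian motions defining $\bXn$), so conditioning on the pair $(\bHn,\bXn)$ reduces the problem to bounding $\P\{n^{-1/2}(\bYn+\bZn) \in C\}$ for a random convex half-space $C \subset \RR^N$.

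\textbf{Central limit step.} The vector $\bYn$ is a sum of at most $n\ovth$ independent, centered, uniformly bounded random vectors in $\RR^N$ (the boundedness coming from $\|\Dz\| \leq \sfk_1$, the uniform operator-norm bound on $\tB_{\si+1}^{\ltai}$ from Lemma \ref{lem:meanvar}, and $|\sfmr_i|=1$). The multivariate Berry--Esseen inequality for convex sets of Bhattacharya--Rao \cite[Corollary 3.2]{BR76} therefore yields a Gaussian approximation uniform over convex sets with error $O(n^{-1/2})$. Combining this with the near-Gaussian hypothesis (\ref{neargaussian}) applied to the independent piece $[\bZn]_i = \sfm_i \tB(0,i)(\zrn - \Enr\zrn)$ (choosing $\U$ there to be the matrix with rows $\sfm_i \tB(0,i)$) gives, for any Borel-convex $C\subset\RR^N$,
\[
\bigl|\Pn\{n^{-1/2}(\bYn+\bZn) \in C\} - \P\{\widetilde\bY \in C\}\bigr| \leq \ka_1 n^{-1/2},
\]
where $\widetilde\bY$ is centered Gaussian with covariance $n^{-1}\Cov(\bYn+\bZn)$. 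A direct computation, using independence of $\bYn$ and $\bZn$, the recursive definition of $\Qst$, and the fact that $\tA_\si = 0$ for $\si\in J_n$ (so $\tB_{\si+1}^{\lta^j} = \tB(i,j)\,\tB_{\si+1}^{\ltai}$ whenever $i\leq j$), identifies this covariance with $\bSn$ up to a uniformly bounded discrepancy.

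\textbf{Comparison, error accounting, main obstacle.} Lemma \ref{lem:gaussianbound} is now applied twice: first to replace $n^{-1}\bSn$ by $\bS$, using $\|n^{-1}\bSn - \bS\| \leq \ka n^{\beta-1}$ from (\ref{var}) together with positive definiteness of $\bS$ (which for $n$ large also forces a uniform lower bound on the spectrum of $n^{-1}\bSn$); and second to absorb the mean shift $r\bG - n^{-1/2}\bGn = O(n^{3\beta-5/2})$ from (\ref{mean}) together with the threshold wobble $\pm n^{\delta-1/2}$. Integrating the resulting conditional uniform bound against the law of $(\bHn,\bXn)$, which is harmless by independence, gives total error $O\bigl(n^{-1/2} + n^{\delta-1/2} + n^{\beta-1}\sqrt{\logp(n^{\beta-1})} + n^{3\beta-5/2}\bigr)$, and this is $O(n^{-\eta})$ for any $\eta$ smaller than each of $1/2-\delta$, $1-\beta$, and $5/2-3\beta$ (logarithmic factors being absorbed into any strictly smaller $\eta$). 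The principal obstacle is the covariance identification in the previous paragraph: one must carefully exploit the telescoping product structure of the $\tB_\si^\ta$ matrices to match $\Cov(\bYn+\bZn)$ with the definition of $\bSn$ through $\tB(i,j)^\dag$, and extract enough covariance information from the one-sided TV bound (\ref{neargaussian}) to pin down $\Cov(\bZn)$ to order $O(\sqrt n)$; once this is done the rest is a routine multivariate Gaussian replacement.
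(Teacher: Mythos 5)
Your overall architecture --- rescale by $\sqrt n$, condition on the pair $(\bHn,\bXn)$, approximate $n^{-1/2}(\bYn+\bZn)$ by a Gaussian uniformly over the relevant convex sets, then use Lemmas \ref{lem:sigmagamma} and \ref{lem:gaussianbound} to adjust mean and covariance and absorb the $\pm n^{\delta-1/2}$ wobble --- is the same as the paper's, and your error accounting is correct. But the central limit step as you order it has a genuine gap. You propose to apply the Berry--Esseen bound over convex sets to $\bYn$ alone, and only afterwards bring in the hypothesis (\ref{neargaussian}) for $\bZn$. That Berry--Esseen bound (which, incidentally, is \cite[Theorem 13.3]{BR76}, not Corollary 3.2 --- the latter is the Gaussian convex-shell estimate used inside Lemma \ref{lem:gaussianbound}) requires standardizing the sum, so its constant degrades as the smallest eigenvalue of $\Cov(n^{-1/2}\bYn)$ tends to zero. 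The hypotheses only guarantee that $\bS$, the limit of $n^{-1}\bSn = n^{-1}\Var(\bYn+\bAn)$, is positive definite; $\Cov(\bYn)$ on its own may be degenerate or nearly so (for instance when most of the variance of $\bKn$ enters through the initial condition via $\Q_\rho$), and then the claimed $O(n^{-1/2})$ approximation of $\bYn$ uniformly over all convex sets is simply not available.

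You also correctly identify, but do not resolve, the second problem created by this ordering: matching $\Cov(\bYn+\bZn)$ with $\bSn$ requires pinning down $\Cov(\bZn)$, and (\ref{neargaussian}) is a Kolmogorov-distance bound that does not directly control second moments. The paper's proof removes both difficulties at once by reversing the order: it first conditions on $\bYn$ and uses (\ref{neargaussian}) with $\U = [\sfmr_1\tB(0,1),\ldots,\sfmr_N\tB(0,N)]^\dag$ to replace $\bZn$ by the genuinely Gaussian vector $\bAn = n^{1/2}\U\vze_\rho$, whose covariance is known exactly (this is legitimate because the relevant sets are orthant-type sets, stable under the translation by the conditioning variable); it then writes $\bAn$ as a sum of $n$ i.i.d.\ Gaussian pieces by self-decomposability and applies \cite[Theorem 13.3]{BR76} to the combined independent array $\bVn^{-1}(\bg_k+\hat{\bg}_k)$, whose total covariance is $n\ind_N$ precisely because $\bSn=\Var(\bYn+\bAn)$ is, for large $n$, uniformly non-degenerate by positive definiteness of $\bS$ and (\ref{var}). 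If you adopt this ordering your argument goes through; as written, the CLT step is not justified.
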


\begin{proof} We rescale $\bKn$ by a factor of $n^{-1/2}$ and observe that for any random vector $y$, independent from the random vector pair $(x_1,x_2)$, we have
\begin{eqnarray*}
\ln \P (x_1 \leq y) - \P(x_2 \leq y) \rn &\leq& \E \ln\P(x_1\leq y | y) - \P(\x_2\leq y | y)\rn\nonumber\\
&\leq& \sup_{z \in \RR^N} \ln\P(x_1\leq z) - \P(x_2\leq z)\rn.
\end{eqnarray*}
Thus conditioning on $(\bHn,\bXn)$ (which is independent from $\bYn$ and $\bZn$) we will be done once we show that
\begin{eqnarray}
\sup_{\xx \in \RN} \ln \Pn\lbr  n^{-1/2}(\bGn + \bYn + \bZn)  \pm n^{\delta-1/2} \leq \xx  \rbr -
\Pn \lbr  \bY \leq  \xx   \rbr \rn \leq n^{-\eta}. \label{a1}
\end{eqnarray}
To this end first recall that by our assumption (\ref{neargaussian}) we have 
\begin{eqnarray*}
\sup_{\U \in \cM_{N,d}}\,\sup_{\xx \in \RR^d} 
\ln \Pnr\lbr  n^{-1/2}\U \lbr\z(0)-\Enr\z(0)\rbr \leq \xx\rbr -
\P \lbr n^{1/2}\U\vze_\rho \leq  \xx   \rbr \rn \leq \sfk_6 n^{-1/2},
\end{eqnarray*}
where $\vze_\rho$ is a centered Gaussian vector in $\RR^d$ with positive semi-definite covariance matrix $\Q_\rho$. Setting $\U = \U_n \equiv [ \sfmr_1\tB(0,1), \ldots, \sfmr_N\tB(0,N)]^\dag$, writing $\bAn = n^{1/2}\U\vze_\rho$ and conditioning on $\bYn$ (indepedent from $\bZn$) in (\ref{a1}) immediately reduces our task to that of showing
\begin{eqnarray*}
\sup_{\xx \in \RN} \ln \Pnr\lbr  n^{-1/2}( \bYn + \bAn) +  n^{-1/2} \bGn \pm n^{\delta-1/2} \leq \xx  \rbr -
\P \lbr  \bY \leq  \xx   \rbr \rn \leq n^{-\eta}.\label{a2}
\end{eqnarray*}
This  follow will from Lemmas \ref{lem:sigmagamma} and \ref{lem:gaussianbound}
once we prove that for $\bN$ a standard normal vector in $\RR^N$ we have
\begin{eqnarray}\label{a3}
\sup_{\xx \in \RN} \ln \Pnr\lbr  n^{-1/2}( \bYn + \bAn) \leq \xx \rbr -
\Pn \lbr  \bVn \bN \leq \xx  \rbr \rn \leq n^{-\eta},
\end{eqnarray}
where $\Var(\bYn +\bAn) = \bSn = \bVn\bVn^\dag$.

Since $\bS$ is assumed to be positive definite, by (\ref{var}) for $n$ large enough $\bS_n$ is positive definite, too. Thus (\ref{a2}) is a consequence of
\begin{eqnarray}\label{eq:CLT}
\sup_{C \in \cC} \ln \Pnr\lbr  n^{-1/2}\bVn^{-1}( \bYn +\bAn)  \in C \rbr -
\P \lbr \bN \in C   \rbr \rn \leq n^{-1/2},
\end{eqnarray}
where as before $\cC$ denotes the collection of Borel-measurable convex sets in $\RN$ and $\bN$ is a standard normal vector in $\RR^N$. Indeed it is enough to consider the convex sets $C = C(\xx) \equiv \{\xx'\in\RN : \bVn \xx' \leq \xx \}$.

Thus we conclude by proving (\ref{eq:CLT}) with an application of \cite[Theorem 13.3]{BR76}. Namely observe that $\bYn$ is a sum of $n$ independent centered random vectors $\bg_k$ in $\RN$  where
\begin{eqnarray*}
[\bg_k]_i = \1_{k \leq \ltai} \1_{k \in I_n} \sfm_i \tB_k^{\ltai} \lbr\D_k-\F(\y(k/n))\rbr
\end{eqnarray*}
and these vectors have uniformly (in $n$ and $\rho$) bounded fourth moments ($|\D_\ta| \leq \sfk_1$). Similarly the self-decomposability of the normal distribution gives us i.i.d. centered Gaussian vectors $\widehat{\bg}_k$ such that $\bAn = \sum_{k=1}^{n}\widehat{\bg}_k$. Hence applying \cite[Theorem 13.3]{BR76} to the independent vector array $\bVn^{-1}(\bg_k+\widehat{\bg}_k)$, whose sum (over $k$ for each $n$ large enough) has a covariance matrix $n\ind_N$, yields the required result.
\end{proof}

\subsection{Proof of Theorem \ref{thm:markovexit}}

Combining the results of Propositions \ref{prop:1} and \ref{prop:2} we get
\begin{eqnarray*}
P_\nex(n,\rho_n) &=& \Pn \lbr \bG + \bY + n^{-(1-\beta)/2}\bHn + n^{-1/6}\bXn \geq 0 \rbr + O(n^{-\eta}) \\
  &=& \En \lbr \Phi\lsq \bS^{-1/2}\ll \bG + n^{(\beta-1)/2}[ \bHn + \bXn^{(1)}]+ n^{-1/6}\bXn^{(2)}\rr\rsq  \rbr + O(n^{-\eta}).
\end{eqnarray*}
where $[\bXn^{(1)}]_i \equiv n^{\beta/2 - 1/3} (X^i(-s_n) - \Fi s_n^2/2) $ and  $[\bXn^{(2)}]_i \equiv  \inf_{t\in [-s_n,u_n]}X^i(t)$. Our normalization and centering ensures that (uniformly in $n$)
\begin{eqnarray*}
\En [\bHn]_i = 0\;, \qquad \Var[\bHn]_i \leq C\;, \qquad
\En [\bXn^{(1)}]_i = 0\;, \qquad\mbox{and}\qquad \Var[\bXn^{(1)}]_i \leq C.
\end{eqnarray*}
We will now show that there also exist positive constants $c_1, c_2$ such that for all $1\leq i\leq N$ we have
\begin{eqnarray*}
 \E \ln \inf_{t\in [-s_n,u_n]}X^i(t) + \Fi^{-1/3}\Gi^{2/3}\Omega \rn &\leq& c_1\Fi^{-1/3}\Gi^{2/3} n^{-1} \\
 \E \ln  \inf_{t\in [-s_n,u_n]}X^i(t) \rn^2 &\leq & c_2 \Fi^{-2/3}\Gi^{4/3}\E |V|^2 < \infty
\end{eqnarray*}
where  $V \equiv  \inf_{t\in \RR}X(t)$ for $X(t) \equiv \frac12t^2 + W(t)$ with $\proc{W}$ a double-sided standard Brownian motion. 

In \cite[Theorem 3.1]{Gro89} it is shown that the distribution function of $V$ is $F_V(v) = 1 - \cK(-v)^2\1_{v<0}$, where $\cK$ is defined in the (\ref{def:cK}) and this explicit formula follows from \cite[(5.2)]{Gro89} taken with $c=1/2$ and $s=0$. Integration by parts reveals that 
$\E V = -\Omega$, for the $\Omega$ defined in (\ref{def:Omega}).

As a consequence of the above and the Brownian scaling giving 
\[
X^i(t) \stackrel{\cal L}{=} \Fi^{-1/3}\Gi^{2/3} X(\Fi^{2/3}\Gi^{-1/3}t)
\]
(the case $\Gi=0$ is trivial), it is enough to show that
\begin{eqnarray}
 \E \ln V_n - V \rn &\leq& c_1n^{-1} \label{eq:1st}\\
 \E \ln  V_n \rn^2 &\leq & c_2 \E |V|^2 < \infty \label{eq:2nd},
\end{eqnarray}
with  $V_n \equiv  \inf_{t\in [-S_n,U_n]}X(t)$, where $X(\cdot)$ is the same doubly infinite Brownian motion as in the definition of $V$ and $S_n$ and $U_n$ are positive sequences increasing to infinity, such that $T_n\equiv S_n\wedge U_n \geq \Fi^{2/3}\Gi^{-1/3}n^{\beta-2/3}$. To this end notice that (\ref{eq:2nd}) follows immediately from $0 \geq V_n \geq V$. To see (\ref{eq:1st}), define the random time  
\[
T \equiv \inf \{s>0 : X(s) = \inf_{t\in \RR}X(t) \;\mbox{ or }\; X(-s) = \inf_{t\in \RR}X(t) \}
\]
and apply the Cauchy-Schwarz inequality to obtain
\begin{eqnarray*}
\E\ln V_n - V \rn \leq \E \lbr \ln V_n - V \rn \1_{|T| > T_n} \rbr \leq
 \l|V_n - V \r|_2 \P\lbr |T| > T_n \rbr^{1/2} \leq  \l| V \r|_2 \P\lbr |T| > T_n \rbr^{1/2}.
\end{eqnarray*}
In \cite[Corollary 3.4]{Gro89} it is proved that $\P\lbr |T| > t \rbr \leq A_0^{-1}e^{-A_0 t^3}$, which together with $T_n > \Fi^{2/3}\Gi^{-1/3}n^{\beta-2/3}$ will imply (\ref{eq:1st}) once we show that $\E |V|^2 < \infty$. This is achieved by using the same estimate on $\P\lbr |T| > t \rbr$  and bounding as follows
\begin{eqnarray*}
\P(V \leq v) \leq \P \lbr \inf_{s \in [-t,t]}X(s) \leq v \rbr + A_0^{-1}e^{-A_0 t^3} 
\leq \lbr \inf_{s \in [-t,t]}W(s) \leq v \rbr + A_0^{-1}e^{-A_0 t^3} 
\leq  e^{-v^2/2t} + A_0^{-1}e^{-A_0 t^3},
\end{eqnarray*}
so that taking $t = \sqrt{v}$ we can deduce that $\P(V \leq v) < C^{-1}\exp(-C |v|^{3/2})$ for some $C>0$ and all $v<0$. In fact, it follows that all moments of $V$ are finite.

To conclude, we use second order Taylor expansion for $\Phi$ and the bound $||\bS^{-1}|| \leq c_3$ for all $n$ to obtain
\begin{eqnarray*}
P(n,\rho_n) = \Phi \ll r \bS^{-1/2}\bG\rr - 
\ll\bS^{-1/2}\bLam \rr^\dag \nabla\Phi \ll r \bS^{-1/2}\bG \rr \Omega n^{-1/6} + 
c_4 n^{\max\{(\beta-1),-1/3,-1/6-1,-\eta\}}.
\end{eqnarray*}
where $[\bLam]_i \equiv \Fi^{-1/3}\Gi^{2/3}$.
Finally, observe that since for $\beta \in (3/4,1)$ we have $\eta < 5/2 - 3\beta < 1-\beta < 1/4$ with $5/2 - 3/4 = 1/4$, the above holds for all $\eta < 1/4$.
\endproof

\section{Finite-size scaling for the 2-core in irregular hypergraphs}\label{sec:hypergraph}

This section is devoted to the analysis of the phase transition for the existence of the 2-cores in hypergraph ensembles and specifically the proof of Theorem \ref{thm:hypergraph}. Section \ref{sec:hypermodel} defines the state-space paramaterizing ensembles of interest. The peeling algorithm and the density dependent Markov chain deriving from it is treated in Section \ref{sec:hyperexact}. The asymptotic probability kernel $W(\cdot|\cdot)$ is furnished in Section \ref{sec:hyperasymptotic}. In Section \ref{sec:ensemble1} it is shown that the paramtetrization of the initial ensemble satifies the hypothesis (\ref{ass:exp})-(\ref{neargaussian}). Finally, the proof of Theorem \ref{thm:hypergraph} is in Section \ref{1stproof}.

\subsection{State-space of ensembles}\label{sec:hypermodel}
We model hypergraphs as bipartite graphs with the set of vertices partitioned into v-nodes and c-nodes (the \emph{configurational} model). The hyperedges correspond to v-nodes and hypervertices to c-nodes, and the resulting hypergraphs is more precisely a hyper-multigraph.

Fix integers $K \geq 2$ and $L \geq 3$. Every ensemble in the state-space is characterized by the non-negative integers 
\[ \oz = (z_1,\ldots,z_{K+L-2}) \equiv (\vo, \t) = (\om_1,\om_2,\ldots,\om_K,\ta_3,\ldots,\ta_L)\;, \qquad n, \qquad \mbox{and} \qquad m,
\]
and denoted $\cG(\oz) = \cG(\vo,\t)$. We define $\dd(\t) \equiv \sum_{j=3}^Lj\ta_j$ and $\hta(\t) \equiv n - \sum_{j=3}^L\ta_j$, where we drop the explicit argument, when it is clear from the context. 

In order for $\cG(\oz)$ to be non-empty, we require that $\sum_{i=1}^Kz_i \leq m$, $\sum_{j=3}^L\ta_j \leq n$ and either $z_K \geq 1$ and $\sum_{i=1}^Kiz_i \leq \dd(\t)$, or $z_K = 0$ and $\sum_{i=1}^Kiz_i = \dd(\t)$. An element in the ensemble is a bipartite graph 
\[ G = (U,V_3,V_4,\ldots,V_L; R_0, R_1,\ldots, R_{K}; E),
\] 
where (denoting disjoint union by $\amalg$) we have $U\amalg V_3 \amalg \cdots \amalg V_L = [n]$ (called the set of v-nodes) and $R_0\amalg \cdots \amalg R_{K} = [m]$ (called the set of c-nodes), and the cardinalities of these sets are $|U| = \hta$, $|V_j| = \ta_j$, $|R_0| = m - \sum_{i=1}^Kz_i$ and $|R_i| = z_i$. Finally $E$ is an ordered list of $n-\hta$ edges
\begin{eqnarray*}
 E = [ (\al_1,a_1),(\al_1,a_2),\ldots,(\al_1,a_{l_1}); (\al_2,a_{l_1+1}),(\al_2,a_{l_1+2}),\ldots,(\al_2,a_{l_1+l_2}); \ldots;\\
(\al_{n-\hta},a_{l_1+\cdots + l_{n-\hta-1} + 1}),\ldots,(\al_{n-\hta},a_{l_1+\cdots + l_{n-\hta}})  ] 
\end{eqnarray*}
such that the pair $(\al,a)$ appears before $(\beta, b)$ whenever $\al < \beta$. Moreover, each $\al \in V_j$ appears in the list exactly $j$ times, while none of the $\al \in U$ appear in any of the pairs in $E$. Analogously, none of the $a \in R_0$ appears in $E$; each $a \in R_i$ for $1\leq i \leq K-1$ appears in exactly $i$ edges; however, each $a \in R_K$ appears in \emph{at least} $K$ edges. The total number of graphs in $\cG(\oz)$ is thus
\begin{eqnarray}
h(\oz)  \equiv |\cG(\oz)| = 
\binom{m}{z_1,\ldots, z_K,\cdot} \binom{n}{\ta_3,\ldots,\ta_L,\cdot}
\lbr d! \prod_{i=0}^K(i!)^{-z_i}\rbr \coeff[e_K(\sfx)^{z_K},\sfx^{\ovd}]
\end{eqnarray}
where $\ovd = \ovd(\oz) \equiv \dd(\t) - \sum_{i=1}^{K-1} iz_i$ and 
$e_k(x) \equiv \sum_{i=k}^\infty x^{i}/i!$.

\subsection{Exact kernel}\label{sec:hyperexact}
We consider the following graph-valued process $\proc{G}$. We assume that the distribution of the initial graph $G(0)$ is such that conditioned on $\{ G(0) \in \cG(\oz) \}$ (when this event has positive probability), it is a uniformly random element of $\cG(\oz)$. At each time $\si = 0, 1, \ldots$ if the set of c-nodes of degree 1 is non-empty, one of them is chosen uniformly at random. Let this c-node be $a$ and observe that there is a unique v-node $\al$, such that $(\al,a) \in E$. This edge and all other edges $(\al, \cdot) \in E$ (edges incident to the v-node $\al$) are deleted and the graph thus obtained is denote by $G(\si+1)$. Otherwise, that is if there are no c-nodes of degree 1 in $G(\si)$, we set $G(\si+1) = G(\si)$.

We can now define the process $\{\oz(\si) = (\vo(\si),\t(\si)), \si \geq 0 \}$ on $\ints_+^{K+L-2}$, where $z_i(\si) = \om_i(\si)$ is the number of c-nodes of degree $i$ in $G(\si)$ ($1 \leq i \leq K$) and  $z_{K+j-2}(\si) = \ta_j(\si)$ is the number of v-nodes of degree $j$ in $G(\si)$ ($3\leq j \leq L$). Notice that we have
\[
\sum_{i=1}^Kiz_i(\si) \leq d(\t(\si)) = \sum_{j=3}^Lj\ta_j(\si),
\]
and that $\hta = \hta(\t(\si)) = \si \wedge \min\{\si'\geq 0 : \z_1(\si') = 0 \}$.

\begin{lemma}\label{lem:hyperexact}
The process $\oz(\cdot)$ is a Markov chain with kernel $W_n$ defined by: 
if $z_1 = 0$, then $W_n(\D\oz|\oz) = \1_{\D\oz = 0}$; if $z_1 > 0$,  then  $W_n(\D\oz|\oz) = 0$
unless there is exists a unique $\ell\in\{3,\ldots,L\}$ such that $\D\ta_i = -\1_{i=\ell}$ 
(then in particular $\D d \equiv d(\D\t) = -\ell$) and we have
\begin{eqnarray}
W_n(\D\oz|\oz) = \frac{h(\zp)}{h(\z)}N(\zp | \z), \label{eq:W}
\end{eqnarray}
where $\zp = \z + \D\z$ and
\begin{eqnarray}
N(\zp | \z) \equiv (\ta + 1) \ell! \sum_{\cD}
\binom{m - \sum_{i=1}^Kz_i'}{q_{01},\ldots,q_{0K}} \binom{z_1'}{q_{12},\ldots,q_{1K}}\cdots \binom{z_{K-1}'}{q_{(K-1)K}}\binom{z_K'}{q_{KK}} q_{01} \;\pi (Q)  \label{eq:N}
\end{eqnarray}
with $\ta = n - \hta = n - \sum_{j=3}^L\ta_j$ and
\begin{eqnarray}
\pi(Q) \equiv \coeff\lsq e_1(\sfx)^{q_{KK}}\prod_{j=0}^{K-1}e_{K-j}(\sfx)^{q_{j K}} , \sfx^{\ovell} \rsq,
\end{eqnarray}
where $\ovell \equiv  \sum_{0\leq i < j < K}(j-i)q_{ij}$
and the collection $\cD$ consist of all triangular arrays of non-negative integers $Q \equiv \{q_{ij} \geq 0$: $0\leq i \leq j \leq K\}$, such that $q_{ii} = 0$ for $1\leq i\leq K-1$ and the remaining entries satisfy the system (writing $z_0 \equiv m - \sum_{i = 1}^Kz_i$ and analogously for $z_0'$)
\begin{equation}
\lbr  \begin{array}{rclclcl}
z_0 &=& z_0' &-& \sum_{j=1}^Kq_{0j} \\
z_1 &=& z_1' &-& \sum_{j=2}^Kq_{1j} &+& q_{01}\\
&\vdots& \\
z_i &=& z_i' &-& \sum_{j=i+1}^Kq_{ij} &+& \sum_{j=0}^{i-1}q_{ji} \\
&\vdots& \\
z_{K-1} &=& z_{K-1}' &-& q_{(K-1) K}&+& \sum_{j=0}^{K-2}q_{j (K-1)} \\
z_K &=& z_K' && &+& \sum_{i=0}^{K-1}q_{jK} 
\end{array} \right.
\end{equation}
and the inequalities 
\begin{equation}
\lbr  \begin{array}{rcl}
d(\ta) -  \sum_{i=1}^Kiz_i &\geq& \ell - \sum_{0\leq i < j \leq K}(j-i)q_{ij}\\
\ell - \sum_{0\leq i < j \leq K}(j-i)q_{ij} &\geq& q_{KK} \\
z'_i &\geq& \sum_{j=i}^Kq_{ij}\qquad\mbox{for}\;\;1\leq i \leq K
\end{array} \right..
\end{equation}

Moreover, conditional on $\{\oz(\si'): 0\leq\si'\leq\si \}$, the graph $G(\si)$ is distributed uniformly over $\cG(\oz) \equiv \cG(\oz(\hta))$, i.e.
\begin{eqnarray}
\P \{ G(\si) = G | \{\oz(\si'): 0\leq\si'\leq\si \} \} = 
\frac{1}{h(\oz(\hta))} \1_{G \in \cG(\oz(\hta))} \label{eq:uniform}
\end{eqnarray}
\end{lemma}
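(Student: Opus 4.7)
The plan is to prove both parts of the lemma simultaneously by induction on $\si$, taking the conditional uniformity (\ref{eq:uniform}) as the inductive hypothesis; the Markov property and the explicit kernel formula (\ref{eq:W}) then emerge as by-products. The base case $\si = 0$ holds by assumption on $G(0)$. For the inductive step I condition on the $\z$-history up to time $\si$; by hypothesis $G(\si)$ is uniform on $\cG(\z(\si))$. The case $z_1(\si) = 0$ is trivial, so assume $z_1 > 0$ and fix an admissible target $\zp = \z + \D\z$ with $\D\ta_i = -\1_{i=\ell}$ for some unique $\ell \in \{3,\ldots,L\}$. For any $G' \in \cG(\zp)$, the description of the algorithm gives
\[
\P(G(\si+1) = G',\, \z(\si+1) = \zp \mid \text{history}) = \frac{\#\Lambda(\z,G')}{h(\z)\,z_1},
\]
where $\Lambda(\z, G')$ is the set of pairs $(G, a)$ with $G \in \cG(\z)$, $a$ a degree-$1$ c-node of $G$, and $\mathrm{peel}(G, a) = G'$.

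The core of the argument is to enumerate $\Lambda(\z, G')$ by reversing the peel: each forward pair $(G, a)$ is uniquely recovered from $G'$ by (i) resurrecting an inactive v-node $\al \in U'$ into the class $V_\ell$ of $G$, (ii) recording the degree-transition array $Q = (q_{ij})_{0 \leq i \leq j \leq K}$ (with $q_{ii}=0$ for $1 \leq i \leq K-1$) that specifies how many c-nodes move from class $R'_i$ of $G'$ to class $R_j$ of $G$, (iii) selecting which specific c-nodes in each source class undergo each transition, (iv) placing $\al$'s $\ell$ ordered edge-ends onto the selected c-nodes compatibly with $Q$ and the saturated degree constraint of $R_K$, and (v) designating which of the newly degree-$1$ c-nodes plays the role of the distinguished peel-edge endpoint $a$. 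Step (i) gives the factor $\ta + 1$, step (iii) gives the product of multinomials in (\ref{eq:N}), the ordering of edges within $\al$'s block of the edge list $E$ produces $\ell!$, step (v) contributes $q_{01}$, and step (iv) produces $\pi(Q)$ via exponential-generating-function calculus encoding the ``at least $k$ distinguishable items per $R_K$-box'' constraint. The linear equations in $\cD$ are the class-size conservation laws and the inequalities are the feasibility constraints on total edge counts and the $R_K$ lower bounds.

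The key observation is that this enumeration depends on $G'$ only through $\zp$: all choices are made among unlabelled class elements and the definition of $\cG(\cdot)$ is invariant under label permutations within each class. Hence $\#\Lambda(\z, G')$ is a function of $(\z, \zp)$ alone. Summing over $\cG(\zp)$ and normalizing yields
\[
\P(\z(\si+1) = \zp \mid \text{history}) = \frac{h(\zp)}{h(\z)}\,N(\zp \mid \z),
\]
which depends on the history only through $\z(\si)$, establishing simultaneously the Markov property and the kernel (\ref{eq:W}). Moreover, because each $G' \in \cG(\zp)$ receives the same conditional probability, the conditional distribution of $G(\si+1)$ given $\z(\si+1) = \zp$ and the history is uniform on $\cG(\zp)$, closing the induction.

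The main obstacle lies in step (iv). Since $R_K$ is a saturated class, the distribution of $\al$'s edge-ends onto the c-nodes transitioning into $R_K$ cannot be expressed as a single multinomial: c-nodes coming from $R'_j$ with $j < K$ need at least $K - j$ edges, while those already in $R'_K$ need at least one extra. The truncated exponential series $e_{K-j}(x)$ and $e_1(x)$ encode these ``$\geq k$'' constraints, and the appropriate coefficient extraction converts them into counts of ordered edge placements. Carefully reconciling the $\ell!$ edge-ordering factor, the split of edges between fixed-degree destinations and $R_K$-bound destinations, and the indistinguishability conventions for edge-ends at each c-node is routine but tedious; apart from this, the proof is a direct exchange argument in the spirit of the configuration model.
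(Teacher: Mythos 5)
Your proposal is correct and follows essentially the same route as the paper's proof: both arguments run an induction on $\si$ with conditional uniformity as the hypothesis, count the pairs (preimage graph, distinguished degree-one c-node) that peel to a fixed $G'\in\cG(\ozp)$ by reversing the algorithm step, and identify the factors $\ta+1$, $\ell!$, the multinomials over the transition array $Q$, the coefficient $\pi(Q)$ for the edge allocation into $R_K$, and $q_{01}$, before observing that the count depends on $G'$ only through $\ozp$. The paper is equally brief about the generating-function step you flag as the main technical point, so no gap relative to the published argument.
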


\begin{proof}
Fix $\oz=(\vo,\t)$ with $z_1>0$ together with $\ozp = (\vop,\tp)$ and $G' \in \cG(\ozp)$ such that transition happens with positive probability. Let $N(G'|\oz)$ be the number of pairs of graphs
$G\in \cG(\oz)$ and choices of the deleted c-node from $R_1$ that would result in our algorithm producing $G'$. Clearly, the following relations need to be satisfied (primed letters correspond to the analogous sets in $G'$)
\[
R_i \subseteq \bigcup_{j=0}^{i} R'_j\quad\mbox{for}\;\;0\leq i\leq K,\qquad R_K'\subseteq R_K,
\]
and for $0\leq i < j \leq K$ we let $q_{ij} \equiv |R'_i\cap R_j|$. In turn, let $R^\ast_K \subseteq R'_K\cap R_K$ be such that each $a \in R^\ast_K$ had its degree decreased during the algorithm step (i.e. $k_a > k'_a$). It follows that
\[
\ovd \geq \ovell \geq q_{K}, \qquad \sum_{j=i+1}^Kq_{0j} \leq z_i', \qquad\mbox{and}\qquad q_{KK} \leq z_K'
\]
where the first inequality is a consequence of the fact that upon the deletion of a degree $\ell$ v-node at least $(j-i)$ edges of c-nodes in $R'_i\cap R_j$ disappear compared to at least one edge disappearing from all the c-nodes in $R_K$.

We proceed to compute $N(G'|\oz)$. First, select a v-node $\al$ to add to $G'$ among the $\ta + 1 = n - d'$ elements of $U$; by our choice of $\ozp$, the degree of $\al$ is $\ell$ and we select a permutation of its $\ell$ sockets that will be used to connect to the c-nodes and create $G \in \cG(\oz)$. 

Second, we sum over the set $\cD$ of all the possible arrays $Q$. For each possible pair in this set there are $\binom{m - \sum_{i=1}^Kz_i'}{q_{01},\ldots,q_{0K}}$ ways of selecting nodes in $R'_0$ to be assigned to $R_1,\ldots,R_K$; for each $1\leq i \leq K-1$ there are exactly $\binom{z_i'}{q_{i(i+1)},\ldots,q_{iK}}$ ways of selecting nodes in $R'_i$ to be assigned to $R_{i+1},\ldots,R_K$; while there are $\binom{z_K'}{q_{K}}$ ways of selecting those c-nodes in $R'_K$ to be assigned to $R^{\ast}_K$. We have thus allocated $\sum_{0 \leq i < j < K}(j-i)q_{ij}$ edges among the $\ell$ emanating out of $\al$.

Third, we need to select the precise number ($\geq K-j$) of edges from our v-node $\al$ that we will connect to each of the $q_{jK}$ c-nodes in $R'_j\cap R_K$; at the same time we select the number of edges to be added to each of the $q_{KK}$ c-nodes in $R^{\ast}$. Since we allocate in this way exactly $\ovell = \ell - \sum_{0 \leq i < j < K}(j-i)q_{ij}$ edges (emanating out of $\al$), this can be accomplished in exactly $\pi(Q)$ ways.
%
%

Fourth, recall that we are counting not as much graphs $G$ as pairs composed of a graph $G$ and  a particular c-node from $R'_0\cap R_1$ of that graph, the choice of which during the step of our algorithm would produce $G'$. There are $q_{01}$ such c-nodes. We have now proved that $N(G'|\ozp)$ is indeed given by the formula (\ref{eq:N}).

We are now ready to conclude the proof of the theorem by demonstrating (\ref{eq:W}) and (\ref{eq:uniform}). Notice that $N(G'|\ozp)$ depends on $G'$ only through $\ozp$ and recall that the graph-valued chain starts at some distribution $G(0)$ which is uniform conditioned on each ensemble $\cG(\oz(0))$. Consequently, by induction on $\si$, as long as $z_1(\si) > 0$, we have
\begin{eqnarray*}
\Pn \{ G(\si+1) = G' | \{\oz(\si'): 0\leq\si'\leq\si \} \} = 
\frac{N(G'|\oz(\si))}{z_1(\si)\;h(\oz(\si))} 
\end{eqnarray*}
and this is the same for all $G' \in \cG(\ozp)$. (Factor of $z_1(\si)$ comes from us counting the pairs: graph and distinguished c-node of degree 1.)  Thus (\ref{eq:W}) is implied by the cardinality of that ensemble being $h(\ozp)$. Finally, with the graph $G(\si)$ being constant for all $\si$ with $\si > \hta$, these arguments have shown that the property (\ref{eq:uniform}) holds for all $\si$.
\end{proof}

\subsection{Smooth asymptotic kernel}\label{sec:hyperasymptotic}

We next demonstrate that there indeed exists a probability kernel $W$ that approximates the kernel $W_n$ on the domain $\dom(\eps)$, defined below. Adopting the convention $\vn = (v_3, \ldots, v_{L})\in\RR_+^{L-2}$, while $\cn = (u_1, \ldots, u_K)\in\RR^K$ and  $\x = (x_1,\ldots, x_{K+L-2})\in\RR^{K+L-2}$, $\dom(\eps)$  is defined by
\begin{eqnarray}
\dom(\eps) \equiv \lbr \x\equiv(\cn,\vn) \in \RR_+^K\times\RR_+^{L-2} : \quad 
x_K\geq \eps; \;\; 
1\geq \sum_{j=3}^Lv_j; \;\; d(\vn) - \sum_{i=1}^Kix_i  \geq \eps   \rbr,
\end{eqnarray}
where $\RR_+ = \{x\in\RR : x\geq 0\}$ and as before we have $d(\vn) \equiv \sum_{j=3}^Lj v_j$.

\begin{propo}\label{lem:hyperasymptotic}
Fix $\eps\in(0,1)$ and $\dom = \dom(\eps)$. For $\x=(\cn,\vn) \in \dom$ and $\D\oz=(\D\vo,\D\t)\in\ints^{K+L-2}$ define transition kernel $W(\cdot|\cdot)$ as follows: $W(\D\oz|\x) = 0$ unless there exists a unique $\ell$ with $\D\ta_i = -\1_{i=\ell}$, in which case
\begin{eqnarray}
W(\D\oz | \x ) \equiv \frac{\ell v_\ell}{d(\vn)}
\binom{\ell -1}{q_0-1, q_1, \dots, q_{K}} \pp_0^{q_0-1}\pp_1^{q_1}\cdots\pp_K^{q_{K}}\label{eq:simplified}
\end{eqnarray}
with
\begin{eqnarray}
\pp_i = \frac{(i+1)x_{i+1}}{d(\vn)}\;\;\mbox{for}\;\;0\leq i \leq K-2;\quad
\pp_{K-1} = \frac{x_K\la^K}{(K-1)!e_K(\la) d(\vn)};\quad
\pp_K = \frac{x_K\la}{d(\vn)},
\end{eqnarray}
where $\la$ is the unique solution of 
\begin{eqnarray}\label{def:lam}
f(\la)\equiv\frac{\la e_{K-1}(\la)}{e_K(\la)} = \frac{d(\vn) - \sum_{i=1}^{K-1}ix_i}{x_K}
\end{eqnarray}
and where $q_i = - \sum_{j=i+1}^K \D z_j \geq \1_{i=0}$ for $i=0,\dots,K-1$ and $q_K = \ell + \sum_{i=1}^Ki \D z_i = \ell - \sum_{i=0}^{K-1}q_i \geq 0$.

Then there exists a positive constant $\sfk_2=\sfk_2(L,K,\eps)$ such that for any $\oz \in \ints^{K+L-2}\cap n\dom$ with $\om_i\geq 1$ for all $i$ we have
\[
\l| W_n(\cdot | \oz) -  W(\cdot | n^{-1}\oz) \r|_{\sTV} \leq \sfk_2 n^{-1}.
\]
\end{propo}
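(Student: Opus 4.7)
The plan is to derive (\ref{eq:simplified}) from the exact formula of Lemma \ref{lem:hyperexact} by a direct Stirling expansion of the combinatorial factors combined with a saddle-point tilt for the polynomial coefficient extractions $\coeff[e_K(\sfx)^{z_K},\sfx^\ovd]$ and $\pi(Q)$. The target kernel has a transparent probabilistic meaning that will organize the expansion: the uniformly chosen degree-$1$ c-node forces removal of its adjacent v-node $\al$, whose degree $\ell$ is size-biased (giving the leading factor $\ell v_\ell/d(\vn)$), and the remaining $\ell-1$ edges of $\al$ become, as $n\to\infty$, independent uniform half-edges, each landing in class $R_{i+1}$ with probability $\pp_i$ equal to the proportion of half-edges in that class. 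The parameter $\la$ in (\ref{def:lam}) has a clean interpretation: conditional on $(z_K,\ovd)$, the degrees of $R_K$-nodes behave asymptotically as i.i.d.\ truncated-Poisson($\la$) variables (i.e., Poisson($\la$) conditioned on being $\geq K$), and $f(\la)=\ovd/z_K$ tilts this distribution so that the observed $R_K$-degree sum is typical.

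Since both kernels have uniformly bounded support in $\D\oz$ (the statement restricts to a single $\ell\leq L$), it is enough to establish $|W_n(\D\oz|\oz)-W(\D\oz|n^{-1}\oz)|=O(1/n)$ for each admissible $\D\oz$. First I would regroup the sum over triangular arrays $Q$ in (\ref{eq:N}) by the aggregated vector $(q_0,\ldots,q_K)$ with $q_i\equiv\sum_{0\leq m\leq i<n\leq K}q_{mn}$, i.e.\ the number of edges of $\al$ crossing the $i$-th level downward; this aggregate depends only on $\D\oz$ and matches the indexing in (\ref{eq:simplified}). Next I would apply Stirling's formula to the two outer multinomial coefficients in $h(\cdot)$, to the factorials $d!$ and $\ovd!$, and to the inner binomials in $N$, extracting the prefactor $\ell v_\ell/d(\vn)$ together with factors $\binom{\ell-1}{q_0-1,\ldots,q_K}\prod\pp_i^{q_i-\1_{i=0}}$ up to relative error $O(1/n)$. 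Finally, the polynomial coefficient extractions are handled through the tilting identity
\begin{equation*}
\coeff[e_K(\sfx)^{z_K},\sfx^{\ovd}]\;=\;\frac{e_K(\la)^{z_K}}{\la^{\ovd}}\,\P[S_{z_K}=\ovd],
\end{equation*}
where $S_m$ is a sum of $m$ i.i.d.\ truncated-Poisson($\la$) variables; using the same $\la$ for $h(\cdot)$ and $\pi(Q)$, the latter becomes a product of independent-trials probabilities, and the residual sum over arrays $Q$ sharing a common aggregated vector collapses by the multinomial theorem to the right hand side of (\ref{eq:simplified}).

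The main obstacle is controlling the ratio of coefficient extractions between $(z_K,\ovd)$ and $(z_K',\ovd')=(z_K+\D z_K,\ovd+\D\ovd)$ at relative accuracy $O(1/n)$. Tilting each side at its own saddle-point parameter reduces the task to a ratio of local-CLT point probabilities $\P[S_m=k]$ evaluated at the mean; the standard local CLT supplies the leading Gaussian density $(2\pi m\sigma^2)^{-1/2}$ with a relative correction of order $O(1/m)$ at the mean, and since the shifts $\D z_K,\D\ovd$ are $O(1)$ while $\la'-\la=O(1/n)$, these perturbations propagate to an $O(1/n)$ ratio, with a single Edgeworth correction absorbing the $O(1/\sqrt{n})$ discrepancies in standardized coordinates. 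The domain conditions $x_K\geq\eps$ and $d(\vn)-\sum ix_i\geq\eps$ keep $\la$ in a compact interval bounded away from $0$ and $\infty$ and keep the variance of a single truncated-Poisson summand bounded away from zero, so all these constants are uniform in $\oz$; summing the pointwise $O(1/n)$ errors over the bounded support of $\D\oz$ then delivers the claimed total variation bound.
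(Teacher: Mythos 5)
Your overall strategy coincides with the paper's: expand the exact kernel of Lemma \ref{lem:hyperexact}, extract the size-biased prefactor $\ell v_\ell/d(\vn)$ and the multinomial structure from the combinatorial factors, and control the ratio of coefficient extractions $\coeff[e_K(\sfx)^{z_K'},\sfx^{\ovd'}]/\coeff[e_K(\sfx)^{z_K},\sfx^{\ovd}]$ via exactly the tilting identity you write (Poisson($\la$) conditioned on $\geq K$, with $f(\la)=\ovd/z_K$) followed by a local CLT with an Edgeworth term; the paper does this by reducing to \cite[Lemma 4.5]{DM08}, and your observation that the domain conditions keep $\la$ and the summand variance in compact sets is precisely how uniformity in $\oz$ is obtained there.

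The one step where your mechanism is wrong, though the conclusion you want is right, is the treatment of the sum over triangular arrays $\cD$ in (\ref{eq:N}). You assert that the arrays sharing a common aggregated vector $(q_0,\ldots,q_K)$ ``collapse by the multinomial theorem'' to the right-hand side of (\ref{eq:simplified}). That is not what happens: for a fixed admissible $\D\oz$ the arrays do all share the same aggregate, but they do \emph{not} contribute comparably. Each array contributes a power of $n$ equal to $\sum_{i<j}q_{ij}+q_{KK}$ (from the binomials $\binom{z_i'}{\cdots}$), subject to the edge budget $\sum_{i<j}(j-i)q_{ij}+q_{KK}\leq\ell$; hence any array with $q_{ij}>0$ for some $j>i+1$, or with $\sum_i q_{(i-1)i}+q_{KK}<\ell$, is smaller by a factor $n^{-1}$ and is absorbed into the error, and exactly \emph{one} array survives at leading order (the nearest-neighbour one with $q_{(i-1)i}=q_{i-1}$ and $q_{KK}=q_K$). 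The multinomial coefficient $\binom{\ell-1}{q_0-1,\ldots,q_K}$ then comes from that single array, as $(\ell-1)!$ (the socket permutations) divided by the $q_i!$'s hidden in its binomials, together with the residual factor $\pi_3=\binom{q_{K-1}+q_K}{q_K}$ and $\pi(Q)=1$ --- not from summing over arrays. You would need to replace the ``multinomial collapse'' by this power-of-$n$ counting argument; once that is done, the rest of your proposal (Stirling on the outer multinomials in $h$, cancellation of the $\prod_i(i!)^{-\D z_i}$ against the definition of $\pp_{K-1}$, and the local-CLT ratio estimate) goes through as in the paper.
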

\begin{proof} 
We begin by defining
\begin{eqnarray}
\pi_1(\ozp,\oz) &\equiv& \frac{\binom{m}{z_1,\ldots,z_K}}{\binom{m}{z'_1,\ldots,z'_K}}
\frac{h(\ozp)}{h(\oz)} (\ta + 1)\ell ! \nonumber\\
&=&
n^{-\ell+1} (\ell-1)! \frac{\ell\, v_{\ell}}{d(\vn)} 
\frac{\prod_{i=0}^{K-1}(i!)^{-\D z_i}}{[d(\vn)]^{\ell-1}}
\frac{ \coeff[e_K(\sfx)^{z'_K}, \sfx^{\ovd'}] }
     { \coeff[e_K(\sfx)^{z_K} , \sfx^{\ovd}  ] } 
(1 + \gam^{(1)}_n), \label{eq:pi1}
\end{eqnarray}
where $|\gam^{(1)}_n| \leq c_1/n$ for some positive constant $c_1=c_1(\eps,L \,)$ and the equality follows from the estimate
\begin{eqnarray*}
\binom{d(\t)}{\ell}^{-1} = n^{-\ell+1}\frac{\ell}{d(\t)} \frac{(\ell-1)!}{[d(\vn)]^{\ell-1}}(1+\gam^{(1)}_n),
\end{eqnarray*}
itself a consequence of $\eps \leq d(\vn\,') < d(\vn) \leq L$ for $n^{-1}\oz \in \dom$ and $n^{-1}\ozp \in \dom$. We also define
\begin{eqnarray*}
\pi_2(Q,\ozp,\oz) \equiv \frac{\binom{m}{z'_1,\ldots,z'_K}}{\binom{m}{z_1,\ldots,z_K}}
\frac{N(\ozp|\oz)}{(\ta+1)\ell ! } =
\sum_{\cD} \binom{z_1-1}{q_{01}-1} 
\prod_{j=2}^{K} \binom{z_j}{\sum_{i=0}^{j}q_{ij}} \pi_3(Q),
\end{eqnarray*}
where for each $Q \in \cD$ 
\begin{eqnarray*}
\pi_3(Q) \equiv \prod_{j=2}^{K-1}\binom{\sum_{i=0}^{j-1}q_{ij}}{q_{0j},\dots,q_{(j-1)j}} 
\binom{\sum_{i=0}^{j-1}q_{ij}+q_{KK}}{q_{0K},\dots,q_{KK}} \pi(Q),
\end{eqnarray*}
and the equality follows by simple algebra.
Notice that $W_n(\D\oz|\oz) = \pi_1(\ozp, \oz) \pi_2(Q,\ozp,\oz)$ for $\oz$ and $\ozp$ in $n\dom$. We proceed to make a series of estimates to find the asymptotics of $\pi_1$, $\pi_2$, and $\pi_3$.

First, the condition $L \geq \ell \geq \ell - \ovell$ implies that 
$|\cD| < L^{K^2}$ and that $\pi_3(q)$ is uniformly bounded for $q \in \cD$ by some constant $c_2=c_2(L)$, since we always have $\pi(Q) \leq e^L$.
It follows that each term indexed by $Q$ in the sum over $\cD$ in $\pi_2$ is at most of the size
\begin{eqnarray}
c_2\, (nL)^{ \sum_{0\leq i<j\leq K} q_{ij} + q_K -1 }\label{eq:termbound}
\end{eqnarray}
and we notice that inequality $\sum_{0\leq i<j\leq K} (j-i)q_{ij} + q_{KK} \leq \ell$ implies that for all $q$ such that either $q_{ij}>0$ for some $i+1 <j$ or $\sum_{i=1}^Kq_{(i-1)i} + q_{KK} < \ell$ we have (\ref{eq:termbound}) bounded by $c_2 L^{L-2} n^{\ell-2}$. Consequently the sum over such $Q$'s only contributes at most $c_3 n^{\ell-2}$ for $c_3 = c_2 L^{K^2 + L-2}$.

It remains to consider $Q$ such that $q_{ij}=0$ for all $i+1 <j$ and $\sum_{i=1}^Kq_{(i-1)i} + q_{KK} = \ell$. In this case, we define $q_{i-1} \equiv q_{(i-1)i}$ for $1\leq i \leq K$ and $q_K=q_{KK}$. It is straightforward to verify that with this definition we have $q_i = - \sum_{j=i+1}^K \D z_j \geq 0$ for $i=0,\dots,K-1$, while additionally $q_0 \geq 1$, and 
$q_K = \ell + \sum_{i=1}^Ki \D z_i \geq 0$. In particular, $q$ is unique. Moreover, $\ovell = q_K + q_{K-1}$ and consequently $\pi(q) = \coeff[e_1(\sfx)^{\ovell},\sfx^{\ovell}] = 1$ and $\pi_3(q) = \binom{q_{K-1}+q_{K}}{q_{K}}$. All of this implies
\begin{eqnarray*}
\pi_2(q,\oz,\ozp) = \binom{z_1-1}{q_{0}-1} 
\prod_{j=2}^{K-1} \binom{z_j}{q_{j-1}} \binom{z_K}{q_{K-1}+q_K}\binom{q_{K-1}+q_{K}}{q_{K}} + \gam^{(2)}_n ,
\end{eqnarray*}
where $|\gam^{(2)}_n| \leq c_2 L^{L-2} n^{\ell-2}$. Rescaling by $n^{-\ell+1}$ and using $x_i = n^{-1}z_i$ we recover
\begin{eqnarray}
\frac{(\ell-1)! \pi_2(q,\oz,\ozp)}{n^{\ell-1}} 
%
&=& \binom{\ell-1}{q_0-1,q_1,\ldots,q_K}
x_1^{q_0-1} \prod_{j=2}^{K-1} x_j^{q_{j-1}} x_K^{q_{K-1}+q_K} + \gam^{(3)}_n , \label{eq:pi2}
\end{eqnarray}
where $|\gam^{(3)}_n| \leq (c_2 L^{L-2}+c_7) n^{-1}$ and we have used the inequalities
\begin{eqnarray*}
x_j^q - c_5n^{-1} \leq \frac{n^{-q} z_j!}{(z_j - q)!} \leq x_j^q
\qquad\mbox{and}\qquad
x_j^{q-1} - c_6n^{-1} \leq \frac{n^{-(q-1)} (z_j-1)!}{(z_j - q)!} \leq x_j^{q-1}
\end{eqnarray*}
for any $0\leq q \leq L\wedge z_j$ with positive constants $c_5 = c_5(L)$ and $c_6 = c_6(L)$.

Consequently, since $-\D z_i = q_{i-1}-q_i$ for $1\leq i \leq K-1$ implies
\[
\prod_{i=0}^{K-1}(i!)^{-\D z_i} = \prod_{i=0}^{K-1}(i!)^{q_{i-1}-q_i}=
\lsq(K-1)!\rsq^{-q_{K-1}} \prod_{i=0}^{K-1}i^{q_{i-1}},
\]
substituting this into (\ref{eq:pi1}) and combining with (\ref{eq:pi2}) yields the estimate
\begin{eqnarray}
W_n(\D\oz| \oz) = W(\D\oz|\oz) \frac{ \coeff[e_K(\sfx)^{z'_K}, \sfx^{\ovd'}] } { \coeff[e_K(\sfx)^{z_K} , \sfx^{\ovd}  ] } 
\frac{e_K(\la)^{q_{K-1}}}{\la^{K q_{K-1}+q_K}}
+ \;\gam^{(4)}_n\label{interestingline}
\end{eqnarray}
for some $|\gam^{(4)}_n| \leq c_8(\eps,L) n^{-1}$ and to obtain result (\ref{eq:simplified}) it only remains to show
\begin{eqnarray}
\l| \frac{ \coeff[e_K(\sfx)^{z'_K}, \sfx^{\ovd'}] } { \coeff[e_K(\sfx)^{z_K} , \sfx^{\ovd}  ] } 
\frac{e_K(\la)^{q_{K-1}}}{\la^{K q_{K-1}+q_K}} - 1 \r| \leq c_9n^{-1}, \label{eq:lastbit}
\end{eqnarray}
where the positive term $e_K(\la)^{q_{K-1}}\la^{-K q_{K-1}-q_K}$ does not depend on $n$.

To this end, note that for $\la>0$ and integers $t,s \geq 1$ we have
\begin{eqnarray}
p_\la(t,s) \equiv  \coeff[e_K(\sfx)^t, \sfx^s] \la^s e_K(\la)^{-t} = \P_\la \ll\sum_{i=1}^tN_i=s\rr,
\end{eqnarray}
where $\{N_i\}$ i.i.d. $\Poisson(\la)$ random variables conditioned on being greater or equal to $K$.
Hence, in view of $-\D z_K = z_K - z'_K = q_{K-1}$ and $\ovd-\ovd\,'= Kq_{K-1}+q_K$,
estimate (\ref{eq:lastbit}) is equivalent to
\begin{eqnarray}
\l| \frac{p_{\la}(z_K+\D z_K, \ovd - (Kq_{K-1} + q_K) ) } {p_{\la}(z_K, \ovd)  } 
 - 1 \r| \leq c_9n^{-1}. \label{eq:lastbitrestated}
\end{eqnarray}
This will follow from a local CLT for the sum, $S_k$ of i.i.d. variables $X_i = (N_i - \xi)/M_2(K,\la)$ for $\xi = \ovd/z_K$ similarly to \cite[Lemma 4.5]{DM08}. First, a straightforward computation reveals that the first two cumulants of $N_1$ are
\begin{eqnarray*}
M_1(K,\la) &=& \E_\la(N_1) = \frac{\la e_{K-1}(\la)}{e_K(\la)} \\
M_2(K,\la)^2 &=& \Var_\la(N_1) = 
\frac{\la [  \la e_{K-2}(\la)e_K(\la) + e_{K-1}(\la)e_K(\la)  - \la e_{K-1}(\la)^2 ] }{e_K(\la)^2}
\end{eqnarray*}
and similarly the higher normalized moments $M_q(K,\la) = \E_\la[N_1-M_1(K,\la)]^q/M_2(K,\la)^q$, $q>3$ can be computed. It is clear that each $M_q(K,\la)$, $q\geq 1$ is bounded away from zero and infinity for $\la$ bounded away from zero and infinity. Second, recall that by our assumption of $n^{-1}\oz \in \dom$ we have  $K + \eps/\rho \leq  \xi \leq L/\eps$ and this implies that the unique non-negative solution of $M_1(K,\la) = \xi$ (whose existence is shown as part of the proof of Lemma \ref{lem:smoothness}) is also bounded away from zero and inifinity. It follows that each moment $M_q(K,\la)$ is uniformly bounded on $\dom$, as well as that $p_{\la}(z_K, \ovd) =  \P(S_k = 0)$ and
$p_{\la}(z_K+\D z_K, \ovd - (Kq_{K-1} + q_K) ) = \P(S_{k'} = a)$ with 
$k = z_K$, $k'-k = \D z_K \in \{-(L-1),\ldots,0\}$ and $a = -  (Kq_{K-1} + q_K + \xi \D z_2)/M_q(K,\la)$. The last quantity, $a$ is uniformly bounded on $\dom$ and in the lattice of all possible values of $S_k$, which has span $b = M_2(K,\la)^{-1}$. At this point the proof is completed with exactly the same arguments as that of \cite[Lemma 4.5]{DM08}.
\end{proof}

\begin{lemma}\label{lem:smoothness}
Functions $\x \mapsto \pp_i$, $i=0,\ldots,K$ are three times differentiable for all $\x \in \dom$ with bounded continuous derivatives.
\end{lemma}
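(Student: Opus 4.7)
The approach is to split on $i$: for $i \leq K-2$ the $\pp_i$ are elementary rational expressions in $\x$, while for $i \in \{K-1, K\}$ the argument reduces, via the inverse function theorem applied to $f$, to smoothness of the implicitly defined $\la = \la(\x)$.

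For $0 \leq i \leq K-2$ we have $\pp_i = (i+1)x_{i+1}/d(\vn)$. On $\dom$ the constraints $x_K \geq \eps$ and $d(\vn) - \sum_{i=1}^K ix_i \geq \eps$ force $d(\vn) \geq Kx_K + \eps \geq (K+1)\eps > 0$, so the denominator is bounded away from zero and these functions are $C^\infty$ with bounded derivatives of all orders.

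The interesting cases are $\pp_{K-1}$ and $\pp_K$. First, $g(\x) \equiv [d(\vn) - \sum_{i=1}^{K-1} ix_i]/x_K$ is smooth on $\dom$ with bounded derivatives (again using $x_K \geq \eps$). The defining inequalities of $\dom$ together with the easy bounds $d(\vn) \leq L$ and $x_K \leq L/K$ (the latter from $Kx_K \leq d(\vn)$) yield
\[
K + \tfrac{K\eps}{L} \;\leq\; g(\x) \;\leq\; \tfrac{L}{\eps},
\]
so $g(\dom)$ is contained in a compact subset of $(K,\infty)$. The key analytic fact is that $f:(0,\infty)\to(K,\infty)$ is a $C^\infty$ bijection with $f'(\la)>0$ everywhere. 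Bijectivity and the boundary behaviour follow from $e_{K-1}(\la) = \la^{K-1}/(K-1)! + O(\la^K)$ and $e_K(\la) = \la^K/K! + O(\la^{K+1})$, giving $f(0^+) = K$, together with $f(\la) \sim \la$ as $\la \to \infty$. Positivity of $f'$ is the exponential-family identity $f'(\la) = \la^{-1}\Var_\la(N)$, where $N \sim \Poisson(\la)$ conditioned on $N \geq K$ --- precisely the representation already exploited in the proof of Proposition \ref{lem:hyperasymptotic}. By the inverse function theorem $f^{-1}$ is $C^\infty$ on $(K,\infty)$, so restricted to the compact set $g(\dom)$ its derivatives up to any prescribed order are uniformly bounded; hence $\la(\x) = f^{-1}(g(\x))$ is three times continuously differentiable on $\dom$ with bounded derivatives, and takes values in some compact interval $[\la_0,\la_1] \subset (0,\infty)$.

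Once $\la(\x)$ is under control the remaining cases are immediate. The function $\pp_K = x_K\la/d(\vn)$ is a product/quotient of three smooth bounded factors with the denominator bounded away from zero, hence $C^3$ with bounded derivatives. For $\pp_{K-1} = x_K\la^K/[(K-1)!\,e_K(\la)\,d(\vn)]$ the only extra observation is $e_K(\la) \geq \la_0^K/K! > 0$ on the range of $\la$, so this denominator too is bounded below. The only mildly delicate step in this plan is the verification $f'(\la) > 0$, which I would handle via the Poisson exponential-family interpretation rather than by a direct but bulkier manipulation using $e_K'(\la) = e_{K-1}(\la)$.
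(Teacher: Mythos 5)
Your proposal is correct and follows essentially the same route as the paper: reduce everything to the smoothness of $\la(\x)=f^{-1}(g(\x))$ via the inverse function theorem, after checking that $g(\dom)$ is compactly contained in $(K,\infty)$ and that $f$ is a smooth increasing bijection. The only differences are cosmetic --- the paper verifies $f'(\la)>0$ by comparing power-series coefficients rather than via the conditioned-Poisson variance identity, and it explicitly passes to a thickening $\dom^{\de}$ to make sense of differentiability at the boundary of $\dom$, which your composition also delivers since $g$ remains in $(K,\infty)$ on a neighborhood of $\dom$.
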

\begin{proof}
Fix $\eps > 0$ in the definition of $\dom$. We choose a $0 < \de < \eps$ and in the remainder of the proof consider $\dom^{\de}$, which will conclusively demonstrate differentiability at the boundary as the formulas for $\pp_i$ remain well-defined on $\dom^{\de}$ for each $\D\z$ (though this extension does not lead to an extended probability kernel, as for example $\pp_1 < 0$ for $\z$ with $x_1 < 0$).

It is enough to show that partial derivatives (of all orders) of $\x \mapsto \pp_i$ for $0\leq i \leq K$ exist in $\dom^{\de}$. This is clear for $i \leq K-2$, whereas for $i=K-1,K$ we notice that function $\la \mapsto \la^K/e_K(\la)$ is bounded and smooth on any compact subinterval of $(0,\infty)$, so that we only need to show that $\x \mapsto \la(\x)$, defined to be the solution of (\ref{def:lam}),
is bounded away from 0 and infinity (uniformly on $\dom^{\de}$) and infinitely differentiable. 

To this end, we first note that extending $f$ to the whole interval $[0,\infty)$ by setting $f(0)=K$ yields a monotone increasing, infinitely differentiable function from $[0,\infty)$ to $[K, \infty)$ and its first derivative is 
$f'(\la) = [(e_{K-1}(\la) + \la e_{K-2}(\la))e_K(\la) - \la e_{K-1}(\la)^2]/e_K(\la)^2$, which is bounded below by some positive constant (compare coefficients in front of powers of $\la$). As a consequence of the inverse mapping theorem, $f^{-1}$ is well-defined and infinitely differentiable on $[K,\infty)$, which further implies that $\la(\cdot)$ is well-defined, bounded and infinitely differentiable on $\dom^{\de}$ with bounded derivatives, because $x_K \geq \eps-\de>0$ there. This completes the proof.
\end{proof}

To determine the mean $\F$ and covariance matrix $\G$ of the increment with distribution $W(\cdot|\x)$, observe that the random variables $-(\D\ta_3, \ldots, \D\ta_L) \equiv -\D\t$ have a multinomial law with parameters $\{1; \ss_3, \ldots, \ss_L\}$ for $\ss_j \equiv jv_j/d(\vn)$ and conditionally on $\D\t$ we have $(q_0-1, q_1, \ldots, q_{K})$ distributed with a multinomial law with parameters $\{\ell-1; \pp_0, \ldots, \pp_{K}\}$, where $\ell$ equals the unique $j$ with $\D\ta_j = -1$. A simple computation then reveals that  $\F = (\Fx,\Fy)^\dag$ for
\begin{eqnarray}\label{def:extraF}
\Fx(\x) &=& \sfR'(1)\ll  -\frac{1}{\sfR'(1)} + \pp_1-\pp_0, \;\;  \pp_2-\pp_1, \ldots,\;\;  \pp_{K-1}-\pp_{K-2}, \;\; - \pp_{K-1}  \rr\\
\Fy(\x) &=&  \ll -\ss_3\,,\;\;\dots,\;\;\; -\ss_L  \rr
\end{eqnarray}
where $\sfR(\xi) \equiv \sum_{j=3}^L\ss_j\xi^{j-1}$, so that $\sfR'(1) = d(\vn)^{-1} \sum_{j=3}^Lj(j-1)v_j$, whereas $\G(\x) \equiv \Cov(\D\oz)$ with $\D\oz = (\D\vo,\D\t)$ is determined by ($1 \leq i \leq j \leq K$, $3\leq l \leq k \leq K$)
\begin{eqnarray} \label{def:extraG}\lbr
\begin{array}{rcl}
\Cov(\D \om_i,\D \om_i) &=& \sfR'(1)(\pp_i\1_{i\neq K} + \pp_{i-1})  
+\; [\sfR''(1) - (\sfR'(1))^2](\pp_i\1_{i\neq K} -\pp_{i-1})^2\; \\
\\
\Cov (\D \om_i, \D \om_j) &=& \sfR'(1) \pp_i\1_{i=j-1} +\; [\sfR''(1) - (\sfR'(1))^2](\pp_i -\pp_{i-1})(\pp_j\1_{j\neq K} -\pp_{j-1})\\
\\
\Cov (\D \om_i, \D\ta_l) &=& \ss_l \lsq l-1 - \sfR'(1) \rsq (\pp_i\1_{i\neq K} - \pp_{i-1})\;\\
\\
\Cov (\D\ta_l,\D\ta_k) &=& -\ss_l\ss_k + \ss_k\1_{l=k}\;\\
\end{array}\right.\end{eqnarray}

\subsection{Ensemble $\cG(n,m,\sfv_n)$}\label{sec:ensemble1}

In Section \ref{sec:hyperexact} we have only assumed that $G(0)$ is such that conditioned on $\{ G(0) \in \cG(\z) \}$, it is a uniformly random element of $\cG(\z)$. This condition allows for variety of initial ensembles. In this section we define a particular initial ensemble.

The inital ensemble $\cG \equiv \cG(n,m,\sfv_n)$ is characterized by non-negative integers  $n$, $m$, $\{\sfv_n(j)\}_{j=3}^L$ and $L\geq 3$. We require that $n = \sum_{j=3}^L\sfv_n(j)$ and that there exists a distribution vector $\vn_0 = (v_{03},\ldots,v_{0L})^\dag$ and a constant $c_0$ such that for each $j$ we have $|\sfv_n(j) - n v_j| \leq c_0$ uniformly in $n$. For simplicity, we also assume that $\sfv_n(j)=0$ for all $n$ if $v_j = 0$ (this can be easily lifted, but the notational burden involved will obscure the issue).

Let $\sfV(x) \equiv \sum_{j=3}^Lj v_{0j} x^{j-1}$ and note that the asymptotic average degree of the v-nodes $\mu \equiv \sfV(1)$ is independent of $n$.  An element in the ensemble is a graph
\begin{eqnarray}
G=(V_3,V_4,\ldots,V_L;C;E),
\end{eqnarray}
where $V_3\amalg\cdots\amalg V_L\equiv[n]$ is the set of v-nodes, with $|V_j| = \sfv_n(j)$, $C\equiv[m]$ is the set of c-nodes, and $E$ is an ordered list of edges
\begin{eqnarray*}
 E = [ (\al_1,a_1),(\al_1,a_2),\ldots,(\al_1,a_{l_1}); (\al_2,a_{l_1+1}),(\al_2,a_{l_1+2}),\ldots,(\al_2,a_{l_1+l_2}); \ldots;\\
(\al_{n},a_{l_1+\cdots + l_{n-1} + 1}),\ldots,(\al_{n},a_{l_1+\cdots + l_n})  ] 
\end{eqnarray*}
where a couple $(\alpha,a)$ appears before $(\beta,b)$ whenever $\alpha<\beta$ and each 
$\al \in V_j$ appears in \emph{exactly} $j$ pairs. The total number of graphs in this ensemble is 
\begin{eqnarray}
|\cG(n,m,\sfv_n)| = 
 \binom{n}{\sfv_n(3),\ldots,\sfv_n(L)} \coeff[(e^\sfx)^m,\sfx^n]\; \ll\sfh_n\rr!
\end{eqnarray}
where $\sfh_n \equiv \sum_{j=3}^Lj\sfv_n(j)$.

To sample from this distribution, first partition $[n]$ into disjoint sets $\{V_j\}_{j=3}^L$ and attribute $j$ sockets to each v-node in the set $V_j$. Second, attribute $k_a$ sockets to each c-node $a$, where $k_a$'s are mutually independent $\Poisson(\mu/\rho)$ random variables, conditioned on the event $\{ \sum_{a\in C}k_a = n\mu \}$. Finally, connect the v-node sockets to the c-node sockets according to an indepedently and uniformly chosen permutation of $\{1,\ldots,n\mu\}$. This sampling procedure is used to establish the approximate mean and covariance matrix for the initial state $\oz = (\vo,\t)$ of our Markov chain. Clearly we only need to deal with $\vo(0) = \orn$, as $\t(0)$ is non-random with entires $\ta_j(0) = \sfv_n(j)$ for $3\leq j \leq L$.

\begin{lemma} \label{lem:hyperinitial}
Let $\orn = (\om_1,\om_2)$ denote the number of c-nodes of respectively degree 1 and degree strictly greater than 1 in a graph $G$ chosen uniformly at random from the non-empty ensemble $\cG(n,\fl n\rho  \fr,\sfv_n)$. Then, for any $\eps>0$, there exist finite, positive constants $\ka_i$ such that for all (allowed) $n$, all $r$, $N$, and $\rho \in [\eps,1/\eps]$
\begin{eqnarray}
|| \E\orn - n \cn_\rho || &\leq& \ka_0  \label{eq:follow1}\\ 
\P \{ \l| \orn - \E\orn \,\r| \geq r\} &\leq& \ka_1 e^{-r^2/(\ka_2 n)}  \label{eq:follow2} \\
\sup_{\U \in {\cal M}_{N\times 2}} \sup_{\x \in \RR^2} \ln 
\P\{\U \orn \leq \x \} - 
 \P\{n^{1/2} \U \vze_\rho \leq \x \}  \rn &\leq& \ka_3 n^{-1/2}. \label{eq:stronger}
\end{eqnarray}
%
%
where $\vze$ is a Gaussian vector in $\RR^2$ with mean $\cn_\rho$ and covariance $\Qc_\rho$ where 
\begin{eqnarray*}
\cn_\rho &=& \ll \mu e^{-\gam}, \quad \rho(1-e^{-\gam}) - \mu e^{-\gam} \rr^\dag\\
\relax [\Qc_\rho]_{11} &=& \mu e^{-2 \gam} (e^\gam - 1 + \gam - \gam^2)  \\
\relax [\Qc_\rho]_{12} &=& -\mu e^{-2 \gam} (e^\gam - 1 - \gam^2) \\
\relax [\Qc_\rho]_{22} &=&  \rho e^{-2 \gam} [ (e^\gam - 1) + \gam(e^\gam - 2) - \gam^2(1+ \gam)]
\end{eqnarray*}
where $\gam \equiv \sfV(1)/\rho$ is the average degree of a c-node and $\mu \equiv \sfV(1)$ is the average degree of a v-node. 
%
\end{lemma}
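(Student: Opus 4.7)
The sampling procedure of Section~\ref{sec:ensemble1} realises the c-node degrees $(k_a)_{a=1}^m$ of a uniform random $G \in \cG(n,m,\sfv_n)$ as independent $\Poisson(\gam)$ variables $N_a$ with $\gam = \mu/\rho$, conditioned on $\sum_a N_a = \sfh_n$. Since both $\om_1 = \sum_a \1_{k_a = 1}$ and $\om_2 = \sum_a \1_{k_a \geq 2}$ depend only on this degree sequence (not on the subsequent uniform matching of sockets), the analysis reduces to studying $(\om_1,\om_2)$ under this conditioned product law. To unify the three estimates I would introduce the i.i.d.\ triples $X_a \equiv (\1_{N_a = 1},\,\1_{N_a \geq 2},\,N_a) \in \{0,1\}^2 \times \N$ and their sum $T_m \equiv \sum_{a=1}^m X_a$, so that $(\orn,\sfh_n)^\dag$ has the law of $T_m$ conditioned on $T_m^{(3)} = \sfh_n$. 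Since $X_a$ has the full lattice $\ints^3$ as support (a direct check of the vectors $(1,0,1)$, $(0,1,2)$, $(0,1,3)$), the covariance of $X_1$ is non-singular and the standard lattice local limit theorem applies to $T_m$.

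For (\ref{eq:follow1}) I would use the classical fact that Poissons conditioned on their sum form a Multinomial: each $k_a$ is marginally $\mathrm{Binomial}(\sfh_n,1/m)$, giving $\E\om_1 = \sfh_n(1-1/m)^{\sfh_n-1}$ and $\E\om_2 = m[1-(1-1/m)^{\sfh_n}] - \E\om_1$. Expanding $\log(1-1/m)$ and using $\sfh_n = n\mu + O(1)$, $m = n\rho + O(1)$, $\sfh_n/m = \gam + O(1/n)$, a short calculation yields $\E\om_i = n[\cn_\rho]_i + O(1)$, uniformly in $\rho \in [\eps,1/\eps]$. For the concentration (\ref{eq:follow2}) I would use the balls-in-urns representation of this Multinomial: if $\sfh_n$ balls are thrown independently and uniformly into $m$ urns, moving any single ball changes each of $\om_1$ and $\om_2$ by at most $2$, so McDiarmid's bounded-differences inequality gives $\P\{|\om_i - \E\om_i| \geq r\} \leq 2\exp(-r^2/(2\sfh_n))$; since $\sfh_n = \Theta(n)$, a triangle inequality and union bound deliver (\ref{eq:follow2}).

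The Gaussian approximation (\ref{eq:stronger}) is the bulk of the work. The plan is to combine a three-dimensional local limit theorem of the form $\P\{T_m = y\} = \phi_m(y) + O(n^{-2})$, where $\phi_m$ is the $\mathrm{N}(m\E X_1,\,m\Cov(X_1))$ density (see e.g.\ \cite{BR76}), with the identity
\[
\P\{\U\orn \leq \x\} = \frac{\P\{\U T_m^{(1:2)} \leq \x,\ T_m^{(3)} = \sfh_n\}}{\P\{T_m^{(3)} = \sfh_n\}}.
\]
Summing the local-limit approximation of the numerator over $(y_1,y_2) \in \ints^2$ with $\U(y_1,y_2)^\dag \leq \x$ produces a Riemann-sum approximation to $\int_{\{\U y_{1:2} \leq \x\}} \phi_m(y_1,y_2,\sfh_n)\,dy_1\,dy_2$ with total error $O(n^{-1})$; similarly the denominator equals $\int \phi_m(y_1,y_2,\sfh_n)\,dy_1\,dy_2 + O(n^{-1}) = \Theta(n^{-1/2})$. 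Dividing, the limiting conditional law is the Gaussian $\mathrm{N}(n\cn_\rho,\,n\Qc_\rho)$ with rate $O(n^{-1/2})$, uniformly in $\U$ and $\x$. The covariance $\Qc_\rho$ emerges as $(m/n)$ times the Schur complement of the third coordinate in $\Cov(X_1)$; a direct computation using $\Var(N_1) = \gam$, $\Cov(\1_{N_1 = 1},N_1) = \gam e^{-\gam}(1-\gam)$ and $\Cov(\1_{N_1 \geq 2},N_1) = \gam^2 e^{-\gam}$ verifies that its entries agree with the three stated formulas.

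The main obstacle is the \emph{uniformity} in $(\U,\x)$ demanded by (\ref{eq:stronger}): the $O(n^{-1/2})$ rate must survive for every $N\times 2$ matrix $\U$ and every threshold $\x$. The preimage $\{\x' \in \RR^2 : \U\x' \leq \x\}$ is a convex polytope in $\RR^2$, and the bivariate Gaussian integral over it is well-behaved uniformly; the delicate point is controlling the lattice-summation error in the numerator when this polytope becomes very elongated or nearly degenerate. This is handled by Fourier/smoothing bounds on the characteristic function of $T_m$, using the uniform boundedness of all moments of $X_a$ on the compact range $\rho \in [\eps,1/\eps]$ (which translates to $\gam \in [\mu\eps,\mu/\eps]$), ensuring that every constant in the argument can be taken independent of $\rho$ and $n$.
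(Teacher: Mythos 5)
Your proposal is correct in substance and follows essentially the same route as the paper: the c-node degrees of a uniform element of $\cG(n,m,\sfv_n)$ are i.i.d.\ $\Poisson(\gam)$ conditioned on their sum being $\sfh_n$, a local limit theorem for this conditioned law yields an $\ell^1$-approximation by a lattice Gaussian, and a lattice-sum-versus-integral comparison over convex sets upgrades this to the uniform bound (\ref{eq:stronger}). The differences are in how the pieces are supplied. The paper imports the mean estimate, the concentration bound, and the $\ell^1$ local CLT wholesale from the proof of \cite[Lemma 4.4]{DM08} (only replacing $nl$ by $\sfh_n$), then adds two new steps: its Lemma \ref{lem:gaussianbound} to swap the normalization $n'=m/\rho$ for $n$, and the Euler--MacLaurin formula \cite[Theorem A.4.3]{BR76} for the sum-to-integral step, which gives the $O(n^{-1/2})$ rate uniformly over convex sets in one stroke. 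You instead rebuild everything: the multinomial representation for the mean, McDiarmid on the balls-in-urns coupling for (\ref{eq:follow2}) (a clean alternative to the paper's citation), and a three-dimensional lattice LLT with the Schur complement identifying $\Qc_\rho$ (your covariance check is right). Two cautions. First, your claimed $O(n^{-1})$ for the Riemann-sum step is optimistic; the boundary contribution over a half-plane for a Gaussian of covariance $\Theta(n)$ is generically $O(n^{-1/2})$ of the total mass, which is exactly what Euler--MacLaurin gives and is all you need, so this does not break the argument. Second, the "Fourier/smoothing bounds" you defer to in the last paragraph are not a side issue but the entire content of the $\ell^1$ local CLT (summing a pointwise $O(n^{-2})$ error over the unbounded preimage $\{\U\x'\leq\x\}$ is not legitimate without it); you should either prove that $\ell^1$ bound or cite it, as the paper does via \cite{DM08}. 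With those two points made precise, your proof goes through.
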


\begin{proof} 
Given $n$ and $m = \floor{n\rho}$, by construction the initial distribution of the v-nodes only affects the distribution of the c-nodes through the total number, $\sfh_n$, of the v-node sockets, which has to match the number of the c-node sockets. Moreover, we have $|\sfh_n - n\mu| \leq c_0$. Hence the analysis reduces to that for the regular hypergraph ensembles, as in the proof of \cite[Lemma 4.4]{DM08}. We follow that proof (subtituting each occurence of $nl$ with that of $\sfh_n$ and using the approximation by $n\mu$), thus demonstrating (\ref{eq:follow1}) and (\ref{eq:follow2}), which correspond to \cite[(4.18) and (4.19)]{DM08} respectively, until the end of the paragraph containing the estimate \cite[(4.25)]{DM08}. The conclusion of that paragraph is that for each $0<\eps'<1$ there is a positive constant $c_0=c_0(\eps')$ such that for each $\gam\in[\eps',1/\eps']$ we have
\begin{eqnarray}
\sum_{\z \in \ints^2}\ln \P(\z) - \sfG_2(\z | n'\cn_\rho; n'\Qc_\rho) \rn \leq c_0 n^{-1/2},\label{eq:fromDM08}
\end{eqnarray}
where $n'=m/\rho$, $\P$ is the probability distribution of the vector $\zrn$, and $\sfG_2(\cdot|\x;\A)$ is the Gaussian density in $\RR^2$ with mean $\x$ and covariance matrix $\A$.

At this point we diverge from the proof of \cite[Lemma 4.4]{DM08} to prove the stronger conclusion (\ref{eq:stronger}). Clearly (\ref{eq:fromDM08}) implies (from now on supressing dependence on $\rho$)
\begin{eqnarray}
\sup_{\U \in {\cal M}_{N\times 2}} \sup_{\x \in \RR^2} \ln 
\P\{\U \z_n \leq \x \} - 
\sum_{\U \z \leq \x }  \sfG_2(\z | n'\cn; n'\Qc)  \rn &\leq& c_0 n^{-1/2}. \label{eq:n'ton}
\end{eqnarray}
Let $\xx$ and $\yy$ be Gaussian vectors with mean and covariance matrix pairs 
$( n'\cn/\sqrt{n}, n'\Qc/n)$ and $( n\cn/\sqrt{n}, n\Qc/n)$ and notice that, since $\Qc$ is positive definite, the lowest eigenvalues of both the covariance matrices are bounded from below by a positive constant uniformly in $n$ and $\gam$. Then, since $|n - n'| < \rho \leq 1/\eps$, an application of Lemma \ref{lem:gaussianbound} to the family of convex sets 
$\cC' \equiv \{n^{-1/2}C : C \in \cC, n > 0 \}$ gives a positive constant $c_1$ such that for all $n$ and $\gam$
\begin{eqnarray}
\sup_{C \in \cC} \ln 
\Pn\{\sqrt{n}\xx \in C\} - 
\Pn\{\sqrt{n}\yy \in C \}  \rn &\leq& c_1 n^{-1/2},
\end{eqnarray}
implying that we can exchange $n'$ for $n$ in (\ref{eq:n'ton}). Now, the sets $C_{\U, \x, n} \equiv \{\z\in\RR^2 : \U(\z - n\cn) \leq \x\}$ are certainly convex and Borel-measurable, so the proof will be complete upon demonstrating that for some $c_2>0$
\begin{eqnarray}
\sup_{C \in \cC} \ln 
\sum_{\z \in C }  \sfG_2(\z\,|\,\vec{0}; n\Qc) - 
\int_{\z \in C} \sfG_2(\z\,|\,\vec{0}; n\Qc) \did\z \rn &\leq& c_2 n^{-1/2}.
\end{eqnarray}
holds uniformly in $n$ and $\gam$. This is a consequence of the Euler-MacLaurin sum formula, as stated, for example, in \cite[Theorem A.4.3]{BR76}, where we take $f$ to be the Schwarz function $\sfG_2(\cdot\,|\, \vec{0}; \Qc)$ and where the first correction to the Gaussian distribution in $\Lambda_1(\x)$ of \cite[(A.4.20)]{BR76} is at most $c_3 n^{-1/2}$.
\end{proof}

Notice that for $K=2$ the ODE system $\did \y/\did \th = \F(\y)$ where $\y = (\cn,\vn)$ becomes
\begin{equation}\lbr \begin{array}{rcl}
\dfrac{\did u_1}{\did \th} &=& -1 + 
\dfrac{\sfR'(1)}{d(\vn)} \lsq \dfrac{\la^2}{e_2(\la)} u_2 - u_1\rsq \\
\\
\dfrac{\did u_2}{\did \th} &=&  - \dfrac{\sfR'(1)}{d(\vn)} \dfrac{\la^2}{e_2(\la)} u_2\\
\\
\dfrac{\did v_j}{\did \th} &=& -\dfrac{j v_j}{d(\vn)}\qquad\mbox{for } j=3,\ldots,L
\end{array}\right.\label{firstsystem}
\end{equation}
with the initial condition $\y(0) = \y_\rho = (\cn_\rho,\vn_0)$. (For $\rho=\rhoc$, this is exactly the system defined by (\ref{def:F}) and (\ref{def:y0}).) Its solution is given in Lemma \ref{lem:hypersolution}.

\subsection{Proof of Theorem \ref{thm:hypergraph}}\label{1stproof}
Fix $0 < \eta_0 <\rhoc$. Let $\tast$ denote the first time at which $z_1(\ta) \leq 0$. Since the 2-core is the stopping set including the maximal number of v-nodes, Lemma \ref{lem:smallcores} implies the existence of constants $\ka_1,\ka_2$ such that for all $\rho \geq \eta_0 n$ the probability that a random hypergraph from $\cG(n,\floor{n\rho},\sfv_n)$ has a non-empty core of size less than $\ka_1n$ is at most $\ka_2 n^{1-l/2}$. Thus setting $\eta_1 = (\ka_1 \eta_0 \wedge \eta_0)/2$ and 
$P(n,\rho) \equiv \Pn\{ \min_{0\leq \tau\leq n(1-\eta_1)}z_1(\ta) \leq 0\}$ we have
\[
| \Pn \{ \tast \leq n-1 \} - P(n,\rho) | \leq \ka_2 n^{-1/2},
\]
and fixing in the remainder of the proof the domain $\dom = \dom(\eta_1)$, the statement of the theorem will follow from Theorem \ref{thm:markovexit} once we show that its hypothesis holds.

Let $\vec{n} \equiv (n_1, n_2, \ldots, n_{L})$, where $n_1 =n_2 =0$ and $n_j = \1_{v_j = 0}$ for $3\leq j \leq L$, and let $d = ||\vec{n}|| \in \ints$. Then $\amb \equiv \{\x \in \RR^L: \vec{n}\cdot\x = 0\} \cong \RR^d$ and the image of $\dom\cap\amb$ under this last vector space homomorphism is a non-degenerate compact convex set $\sfD \subset \RR^d$. 
To demonstrate the hypothesis, observe that by Lemmas \ref{lem:hyperexact}, \ref{lem:hyperasymptotic}, and \ref{lem:smoothness} our original chain has the countable statespace $\ints^{d}\cap n\sfD$, is bounded and density dependent, with a sufficiently smooth kernel. Moreover, Lemma \ref{lem:hyperinitial} states that the parametrization of the intitial distribution satisfies the relevant conditions for Theorem \ref{thm:markovexit} (where the $\rho$ in Theorem \ref{thm:markovexit} is $(\rho - \rhoc)$ for the $\rho$ in the Lemma  \ref{lem:hyperinitial}). It remains to be shown that the critical solution of the fluid limit ODE has the required properties and that the relevant matrix $\bS$ is positive definite.

We start with the former. The solution of \ref{firstsystem} in Lemma \ref{lem:hypersolution} reveals that $\Theta \equiv \{\th\in (0,1) : x_1(\th, \rhoc)=0 \} = \{\th\in(0,1) : \sfV(\ze(\th))/\rhoc = -\log(1-\ze(\th)) \}$. Since $g(\ze) \equiv -\log(1-\ze)$ is analytic on $[0,1)$ and $\lim_{\ze \uparrow 1}g(\ze) = \infty$, so it can only  coincide with the polynomial $\sfV$ at finitely many points in $(0,1)$. Hence $\Theta = \{\thic: i=1,\ldots,N\}$, as defined in the statement of the result. By the first paragraph of the proof we must also have $\th_c^N < 1-\eta$ and we can now set $\ovth = [\th_c^N + (1-\eta_1) ]/2$. Differentiability of $(\ze,\rho) \mapsto \sfV(\ze)/\rho + \log(1-\ze)$ on $(0,1)\times[\eps,1/\eps]$ and the inverse function theorem imply that $\sfV'(\ze) = g'(\ze) = (1-\ze)^{-1}$ for each $\ze \in Z$. Since we assumed that $\sfV''(\ze)/\rhoc < (1-\ze)^{-2}$ for each $\ze \in Z$, inspection of the solution $\y = (\cn,\vn)$ in Lemma \ref{lem:hypersolution} and elementary calculus allow us to conclude that the critical solution remains in $\sfD^\circ$ until time $\ovth$  ($u_2(\th)>0$ for $\th\in [0,\ovth]$, same for $v_j$ with $v_{0,j}>0$) except at times $\th\in \Theta$, when it is tangent to the plane $x_1\equiv 0$ (and no other face of $\sfD$), moreover satisfying the tangency conditions in (\ref{tangencyconditions}) with $\sfd_i = 0$ and $\sfmr_i = \vec{\sfe}_1$.

The proof will be complete once it is demonstrated that the p.s.d. symmetric matrix with entries ($1\leq i \leq j \leq N$)
\[
[\bS]_{ij} = \lsq\Q(\thic) (\B_{\thic}(\thjc))^\dag \rsq_{11},
\]
is actually positive definite. Here $\Q$ and $\B$ are defined as in Theorem \ref{thm:markovexit}, where we take all the chain specific data ($\F$, $\G$, $\y_0$ and $\Q_0$) like in Theorem \ref{thm:hypergraph}. Notice that $\bS$ is the covariance matrix of the centered Gaussian vector $\bY$ in $\RR^N$ with entries $[\bY]_j = \sum_{i=1}^j [\B(i,j)\bg_i]_1$, where $\B(i,j) = \B_{\thic}(\thjc)$ and $\{\bg_j\}$ are independent centered Gaussian vectors in $\RR^{K+L-2}$ with covariance matrices 
\[
\bV_j = \int_{\thic}^{\thjc} \B_{\ze}(\thjc) \G(\y(\ze)) (\B_{\ze}(\thjc))^\dag \did \ze
+ \Q(0)\1_{j=1}.
\]
It is now enough to show that $[\B(j,j)\bV_j\B(j,j)^\dag]_{11}=[\bV_j]_{11}>0$ for every $j=1,\ldots,N$. Since $\det\B_\ze(\th)>0$ and $\G$ is p.s.d., this will follow once we show that
$[\B(\eps)\G(\x)\B(\eps)^\dag]_{11}>0$ for all $0 \leq \eps < \eps_0$ and $\x\in\sfD$, where $\B(\eps) \equiv \B_{\thjc-\eps}(\thjc)$. This last claim is a consequence of $G_{11}(\x)>0$ for $\x\in\sfD$, $\B(0)=\ind$ and the continuity of $\B(\cdot)$.
\endproof

\begin{remark}\label{rem:final}
We briefly explain, using the example of ensemble $\cGt \equiv \cG(n,\rho,\sfu_n,\sfv_n)$, how to analyze other ensembles within this framework. Ensemble $\cGt$ is defined similarly to $\cG(n,\floor{\rho n},\sfv_n)$, except that the degrees of hypervertices are given by a distribution $\sfu_n$. There are exactly $\sfu_n(i)$ hypervertices with degree $i$ and we assume $\floor{\rho n} = \sum_{i=1}^{K-1}\sfu_n(i)$, $\sum_{i=1}^{K-1} i \sfu_n(i) = \sum_{j=3}^L j \sfv_n(j)$ and $|\sfu_n(j) - n \urj| \leq c_0$ uniformly in $n$ for some strictly positive vector $\cn_{\rho} =(u_{\rho,1,}\ldots,u_{\rho,K})^\dag$, such that the map $\rho \mapsto \cn_\rho$ is twice continuously differentiable.

To establish the FSS for the phase transition in this ensemble, first, notice that the proof of Proposition \ref{lem:hyperasymptotic} also establishes the simpler (do not require the CLT arguments) claim that the kernel $W_n$ is density dependent on
\begin{eqnarray}
\widetilde{\dom}(\eps) \equiv \lbr \x\equiv (\cn,\vn) \in \RR_{+}^K\times\RR_{+}^{L-2} : \qquad 
  1\geq \sum_{j=3}^Lv_j \geq \eps; \quad d(\vn) = \sum_{i=1}^Kix_i  \rbr
\end{eqnarray}
considered as a subset of the unique $(K+L-3)$-dimensional affine hyperplane that contains it (it follows from the definition of $W_n$ that a chain with this kernel remains in $n\domt$ if started there). The asymptotic kernel in this case, $\Wt(\cdot|\cdot)$ is defined similarly to $W(\cdot|\cdot)$, where we substitute $\domt$ for $\dom$, take $\pp_{K-1} = K x_K/d(\vn)$ and $q_K = \pp_k= \la = 0$, impose $\ell = - \sum_{i=1}^{K}i \D z_i =  \sum_{i=0}^{K-1}q_i$, and interpret $\pp_K^{q_K}=0^0$ as 1. Second, the associated differential equations are given in terms of $\F$ and $\G$ computed in (\ref{def:extraF}) and (\ref{def:extraG}), respectively, and the initial condition is now $\y_\rho=(\cn_\rho,\vn_0)$ and $\Q_\rho = 0$. The solution of such a system is provided in \cite[Appendix B]{LMSS01}. The absence of small cores in this ensemble can be established with a variation of the proof of Lemma \ref{lem:smallcores}. The analog of Theorem \ref{thm:hypergraph} for this ensemble now follows by essentially the same proof as above.
\end{remark}

\section{Acknowledgements}
I am very grateful to Amir Dembo for suggesting the problem and many helpful discussions. Special thanks to Andrea Montanari for pointers to the coding and statistical physics literature.
%
%

\appendix
\section{Appendix:  Proof of  Lemma \ref{lem:meanvar}}\label{app:A}

The matrix $\A(\y(\ta/n,\rho))$ and so also matrices $\tA_\ta$ are uniformly (in $\ta$ and $\rho$) bounded in the operator norm, which implies uniform (in $\si$, $\ta$ and $\rho$) boundedness of $||\tB_{\si}^{\ta}||$ and for $n > n_0$, also the existence of the inverses $(\tB_{\si}^{\ta})^{-1}$ and a uniform bound (in $\si$, $\ta$ and $\rho$) on their norm.

Moving on to demonstrate (\ref{eq:ystar}), let $D_n(\ta) \equiv n^{-1}\yst(\ta) -  \y(\ta/n, \rho) $, so that $D_n(0)=0$ and $D_n(\ta+1) = \tB_{\ta}^{\ta} D_n(\ta) + \vxi_n(\ta)$, where
\[
\vxi_n(\ta) = \int_{\frac{\ta}n }^{\frac{\ta+1}n} \lsq\F(\ytnrho) -\F(\y(\th,\rho))\rsq \did\th.
\]
The Lipschitz continuity of $\F(\cdot)$ and $\y(\cdot,\cdot)$ together with the supremum bound on the integral give the uniform estimate
\[
\l| \vxi_n(\ta) \r| \leq c_0 n^{-2}.
\]
With $D_n(\ta) = \sum_{\si=0}^{\ta-1}\tB_{\si+1}^{\ta} \vxi_n(\ta)$ for $\ta \leq n \ovth$\; and $\tB_{\si}^{\ta}$ uniformly bounded, as explained in the first paragraph, we can deduce (\ref{eq:ystar}).

To see that (\ref{eq:tB}) holds, define $D'_n(\si,\ta) \equiv || \tB_\si^{\ta-1} - \B_{\si/n}(\ta/n)||$, so that in view of $\tB_{\si}^{\si-1}=\B_{\si/n}(\si/n)=\ind$ we have $D'_n(\si,\si)=0$, while for $\ta \geq \si$
\begin{eqnarray}
D'_n(\si,\ta+1) &\leq& D'_n(\si,\ta) + n^{-1}\l| \tA_\ta\tB_\si^{\ta-1} + 
n \lsq \B_{\si/n}(\tfrac{\ta+1}n) - \B_{\si/n}(\tfrac{\ta}n)\rsq \r| \nonumber\\
&\leq& D'_n(\si,\ta) + n^{-1}\l| n \int_{\frac{\ta}{n}}^{\frac{\ta+1}{n}}
\lsq \tA_\ta\tB_\si^{\ta-1} + \A(\y(\th,\rho))\B_{\si/n}(\th) \rsq \did\th \r|  \nonumber\\
&\leq& D'_n(\si,\ta) + n^{-1}\sup_{\th \in [\frac{\ta}n, \frac{\ta+1}n] }
\l| \tA_\ta\tB_\si^{\ta-1} + \A(\y(\th,\rho))\B_{\si/n}(\th)\r|. \label{eq:D'}
\end{eqnarray}
Next, recall that $\tA_\ta = \A(\y(\ta/n,\rho))\1_{\ta \in I_n}$, so that boundedness (of $\A$) and Lipschitz continuity (of $\y$ and $\A$) give
\begin{eqnarray}
\sup_{\th \in [\frac{\ta}n, \frac{\ta+1}n] }\l| \tA_\ta - \A(\y(\th,\rho)) \r| 
\leq c_1 n^{-1}\1_{\ta \in I_n} + c_2\1_{\ta \in J_n}.\label{eq:supa}
\end{eqnarray}
From the ODE (\ref{ode:b}) and that same boundedness of $\A$ it is apparent that
\begin{eqnarray}
\l| \B_\ze(\th) - \B_{\ze'}(\th')\r| \leq c_3 ( |\th-\th'| + |\ze-\ze'| ).\label{eq:supb}
\end{eqnarray}
Uniform boundedness of all the matrices involved and the inequalities (\ref{eq:supa})
and (\ref{eq:supb}) imply that for $\th \in [\ta/n, (\ta+1)/n]$ we have
\begin{eqnarray*}
\l| \tA_\ta\tB_\si^{\ta-1} + \A(\y(\th,\rho))\B_{\si/n}(\th)\r| 
&\leq& c_4 D'_n(\si,\ta) + c_5n^{-1} + c_6 \1_{\ta \in J_n}
\end{eqnarray*}
which combined with the bound in (\ref{eq:D'}) results in the estimate 
\[
D'_n(\si,\ta+1) \leq (1 + c_4n^{-1}) D'_n(\si,\ta) +  c_5n^{-2} + c_6n^{-1} \1_{\ta \in J_n}
\]
and hence
\[
\max_{0 \leq \si \leq \ta \leq n\ovth} D'_n(\si,\ta) \leq  c_7 (1 + |J_n|) n^{-1}.
\]

Next, observe that for any $d\times d$ matrices $\V,\U$ and $\V',\U'$ we have
\begin{eqnarray*}
||\V\U\V^\dag - \V'\U'(\V')^\dag|| &\leq& ||\V-\V'|| ||\U\V^\dag|| +
||\V'\U|| ||\V-\V'||  + ||\V'|| ||\U-\U'|| ||\V'||.
\end{eqnarray*}
Applying this estimate (first with $\U = \U' = \Q(0,\rho)$, $\V = \B_0(\ta/n)$ and 
$\V' = \tB_0^{\ta}$, and then with $\U = \G(\y(\ze,\rho))$, $\U' = \G(\ysnrho)$, 
$\V = \B_\ze(\ta/n)$ and $\V' = \tB_{\ceil{n\ze}}^{\ta}$) and subsequently 
using the bound (\ref{eq:tB}), uniform boundedness of all the matrices involved and Lipschitz continuity of $\y(\cdot,\cdot)$ and $\G(\cdot)$ gives
\begin{eqnarray*}
|| \B_0(\ta/n)\, \Q_\rho ( \B_0(\ta/n) )^\dag - \tB_0^\ta\, \Q_\rho ( \tB_0^\ta )^\dag ||
&\leq&  c_8 (1 + |J_n|)n^{-1} \\
|| \B_{\ze}(\ta/n) \,\G(\y(\zeta,\rho)) ( \B_{\ze}(\ta/n) )^\dag -
\tB_{\si+1}^\ta \, \G(\ysnrho) ( \tB_{\si+1}^\ta )^\dag|| &\leq&  c_8 (1 + |J_n|)n^{-1}
\end{eqnarray*}
uniformly in $\ta$ and $\ze \leq \ta/n$. Finally, noticing that the sum over $\si\in I_n\cap [0,\ta]$ in the definition (\ref{def:var}) differs from the analogous sum over $\si\in[0,\ta]$ by at most $\ceil{|J_n|}$ uniformly bounded terms and comparing that last sum with the integral in
(\ref{def:Q}) gives the desired result.
\endproof
\section{Appendix: ODE solutions and absence of small cores}\label{app:B}
Here we provide solutions to the system of differential equations (\ref{firstsystem}) that arose from our analysis of the 2-core problem in hypergraphs. This is a special case of the result obtained in \cite{LMSS01}.

\begin{lemma}\label{lem:hypersolution}
There exists a unique solution $(\th,\rho) \mapsto \y(\th,\rho) = (\cn(\th,\rho),\vn(\th))$ in $C^3([0,\ovth]\times[-\mala,\mala],\RR^L)$ to the ordinary differential system (\ref{firstsystem}) with the initial condition $y_\rho = (\cn_\rho,\vn_0)$ and it satisfies the relation
\begin{eqnarray}
u_1(\th,\rho) &=& h(\th) \lsq\,\zeta(\th) - 1 + e^{-h(\th)/\rho} \,\rsq \label{eq:odesoln1}\\
u_2(\th,\rho) &=& \rho\, e^{-h(\th)/\rho}\,e_2\ll h(\th) / \rho \rr \label{eq:odesoln2}
\end{eqnarray}
where $h(\th) \equiv \sfV(\zeta(\th)) $ and
$\zeta(\th) \equiv \exp\{ - \int_0^\th d(\vn(s))^{-1} \did s \}$.
\end{lemma}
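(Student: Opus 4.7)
The plan has two parts: first establish existence and uniqueness of a $C^3$ solution by standard theory, and then verify the claimed formulas by direct substitution. For existence and uniqueness I would invoke Lemma \ref{lem:smoothness}, which shows that the probabilities $\pp_i$ (and hence $\F$) are infinitely differentiable on $\dom^\delta$, with $\lambda(\x)$ well-defined and smooth via the inverse function theorem applied to the monotone map $f$ of (\ref{def:lam}). Combined with the smoothness of $\rho \mapsto \y_\rho$ and the Picard-Lindel\"of theorem (with smooth dependence on parameters), one obtains a unique $C^3$ solution locally; the trajectory remains in a compact subset of $\dom^\circ$ up to $\ovth$ (which can be checked a posteriori from the explicit formula, since $\zeta(\th)$ stays in $(0,1]$).

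For the construction, I would first treat the $v_j$ equations, which decouple from the rest. Defining $\zeta(\th) \equiv \exp\{-\int_0^\th d(\vn(s))^{-1}\did s\}$, one has $\zeta'/\zeta = -1/d(\vn)$, so $(\log v_j)' = -j/d(\vn) = j \zeta'/\zeta$ and $v_j(\th) = v_{0,j}\zeta(\th)^j$. Substituting back gives
\[
d(\vn(\th)) = \sum_{j=3}^L j v_{0,j} \zeta^j = \zeta\,\sfV(\zeta) = \zeta\, h(\th), \qquad \sfR'(1) = \zeta\,\sfV'(\zeta)/h,
\]
from which $\zeta'=-1/h$ and $h' = \sfV'(\zeta)\zeta' = -\sfV'(\zeta)/h$.

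Next I would check that the ansatz $\lambda = h/\rho$ is consistent with the defining relation (\ref{def:lam}), specialized to $K=2$: using (\ref{eq:odesoln1})-(\ref{eq:odesoln2}),
\[
d(\vn) - u_1 = \zeta h - h(\zeta -1 + e^{-h/\rho}) = h(1-e^{-\lambda}),
\]
so $(d(\vn)-u_1)/u_2 = (h/\rho)(e^\lambda-1)/e_2(\lambda) = \lambda e_1(\lambda)/e_2(\lambda)$ as required. This identifies $\lambda(\y(\th,\rho)) = h(\th)/\rho$ along the trajectory, which is manifestly smooth.

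Finally I would verify the two ODEs for $u_1$ and $u_2$ by direct differentiation. For $u_2 = \rho - (\rho + h)e^{-h/\rho}$, one computes $u_2' = (hh'/\rho)e^{-h/\rho}$; on the right-hand side of the ODE, using $\sfR'(1)/d(\vn) = \sfV'(\zeta)/h^2$ and $\sfV'(\zeta) = -h h'$, one obtains $-\sfV'(\zeta)e^{-h/\rho}/\rho = (hh'/\rho)e^{-h/\rho}$, matching. For $u_1 = h(\zeta - 1 + e^{-h/\rho})$ a parallel computation yields $u_1' = h'(\zeta-1+e^{-h/\rho}) - 1 - (hh'/\rho)e^{-h/\rho}$, and the right-hand side of the ODE reduces to the same expression after substituting the same identities. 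Checking $\y(0,\rho) = \y_\rho$ is a matter of plugging in $\zeta(0)=1$, $h(0)=\mu$, $\lambda(0)=\mu/\rho$.

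The main obstacle is purely bookkeeping: keeping straight the relations $h(\th) = \sfV(\zeta(\th))$, $d(\vn) = \zeta h$, $\sfR'(1) = \zeta \sfV'(\zeta)/h$, and $\lambda = h/\rho$, so that each side of the $u_1$ and $u_2$ equations collapses to the same expression. No conceptual difficulty is anticipated, since uniqueness is guaranteed by standard ODE theory once smoothness of $\F$ is in hand from Lemma \ref{lem:smoothness}.
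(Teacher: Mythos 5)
Your proposal is correct and follows essentially the same route as the paper: existence, uniqueness and smoothness are deduced from Lemma \ref{lem:smoothness}, the $v_j$ equations are integrated to give $v_j=v_{0,j}\zeta^j$ and hence $d(\vn)=\zeta\,\sfV(\zeta)$ and $\la=\sfV(\zeta)/\rho$, and the formulas for $u_1,u_2$ are then verified by direct substitution (the paper merely performs this verification after changing the independent variable from $\th$ to $\ze$, whereas you differentiate in $\th$ directly — a cosmetic difference). Your identities, in particular $\sfR'(1)=\zeta\sfV'(\zeta)/h$, are the correct ones and the computation closes.
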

\begin{proof}
The existence and smoothness property of the solution comes from the corresponding properties of $\F$, which are a consequence of Lemma \ref{lem:smoothness}. In order to prove that (\ref{eq:odesoln1}) and (\ref{eq:odesoln2}) indeed give a solution, we start by making the substitution $x_i(\th,\rho) = x_i(\ze,\rho)$ for the monotonically decreasing continuously differentiable function $\ze:[0,1] \rightarrow [0,1]$ defined in the statement of the lemma. Notice that $\ze(0) = 1$ and $\lim_{\th \uparrow 1}\ze(\th) = 0$, since $\did d(\vn(s))/\did s < -3$ and $\lim_{s \uparrow 1} d(\vn(s)) = 0$, so that $\ze$ is a bijection. 

Next, substituting 
\[ 
x_1(\ze,\rho) = \sfV(\ze) \lsq\,\ze - 1 + \exp(-\sfV(\ze)/\rho) \,\rsq\qquad\mbox{and}\qquad x_2(\ze,\rho)=\rho\, e^{\sfV(\ze)}\;e_2\ll \sfV(\ze) / \rho \rr 
\]
for respectively $x_1$ and $x_2$ in (\ref{def:lam}) with $K=2$, we can verify that $y_j(\ze) = y_j(\ze=1) \ze^j$, which implies
\[
d(\y(\ze)) = \sum_{j=3}^Lj y_j(\ze) = \sfV(\ze)\ze \qquad\mbox{and}\qquad \la(\x(\ze), \y(\ze)) = \sfV(\ze)/\rho.
\]
Moreover, $\did \ze/\did \th = -1/V(\ze)$ and $\sfR'(1) = \sfV'(\ze)/\sfV(\ze)$, so substituting for $\la$, $x_1$ and $x_2$ in the ODE system w.r.t. $\th$ reduces it to the following systems w.r.t $\ze$
\begin{eqnarray}
\dfrac{\did x_1}{\did \ze}(\ze) &=& -\sfV(\ze) + \sfV'(\ze)e^{-\sfV(\ze)/\rho} \,\sfV(\ze)/\rho 
- \sfV'(\ze)  \lsq \ze - 1 + e^{-\sfV(\ze)/\rho} \rsq \\
\dfrac{\did x_2}{\did \ze}(\ze) &=&  - \sfV'(\ze)e^{-\sfV(\ze)/\rho} \,\sfV(\ze)/\rho
\end{eqnarray}
Simple differentiation now shows that the functions $x_1(\ze,\rho)$ and $x_2(\ze,\rho)$ indeed satisfy this system, together with the initial condition. 
\end{proof}
 
From our definition of the domain $\sfD$ it is clear that the Markov exit problem yields information about the existence of large 2-cores, that is 2-cores of size at least $\eta n$ for some positive $\eta$. In the language of coding theory, this means that it detects large decoding failures (lying in the so-called \emph{waterfall} region). In principle, there could exist cores of smaller than linear size (corresponding to decoding failures in the \emph{error floor} region). It is a consequence of the results in  \cite{OVZ05} that in  ensembles with minimal v-node degree greater than 2 the probability of such occurence tends to zero as $n$ increases. 

Here we prove an explicit bound on this small probability for our ensemble $\cG(n,m,\sfv_n)$, which is accomplished by estimating the probability that the hypergraph contains a small stopping set. A \emph{stopping set} is defined to be a subset of the v-nodes such that the restriction of the hypergraph to this subset has no c-nodes of degree 1. Thus the 2-core is a stopping set including the maximal number of v-nodes.

\begin{lemma}\label{lem:smallcores}
Consider the ensemble
$\cG(n,\floor{n\rho},\sfv_n)$ and let $l \equiv \min\{i: v_{0,i} > 0 \}$. Then for any $\eps>0$ there exists finite positive constants $\ka_1(\eps)$ and $\ka_2(\eps)$ such that for any $\rho \geq \eps$ the probability that a random hypergraph from the ensemble $\cG(n,\floor{n\rho},\sfv_n)$ has a stopping set consisting of fewer than $\ka_1 n$ v-nodes is at most $\ka_2 n^{1-l/2}$.
\end{lemma}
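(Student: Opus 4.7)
The natural approach is a first-moment (union bound) method over all potential small stopping sets. Recall from Section \ref{sec:ensemble1} that a sample from $\cG(n,\floor{n\rho},\sfv_n)$ can be obtained by assigning to each of the $\sfh_n = \sum_j j \sfv_n(j)$ v-sockets an independent uniformly random c-node in $[m]$, $m = \floor{n\rho}$, conditioned on the total assignment being consistent with the Poisson degree constraint $\sum_a k_a = \sfh_n$; as in the proof of Lemma \ref{lem:hyperinitial}, this conditioning only changes probabilities of interest by an $O(1)$ factor. Under this description, a v-node subset $S$ of size $s$ with total v-degree $d_S = \sum_{\alpha\in S}j_\alpha$ is a stopping set iff the $d_S$ v-sockets incident to $S$ land in at most $\lfloor d_S/2\rfloor$ distinct c-nodes (since every c-node hit from $S$ must be hit at least twice).

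First, I would estimate this probability. A union bound over the choice of the at most $\lfloor d_S/2\rfloor$ occupied c-nodes gives
\[
\P(S \text{ is a stopping set}) \;\le\; \binom{m}{\lfloor d_S/2 \rfloor}\Big(\tfrac{\lfloor d_S/2\rfloor}{m}\Big)^{d_S} \;\le\; \Big(\tfrac{e\,d_S}{2m}\Big)^{d_S/2},
\]
the second inequality by a Stirling estimate on the binomial. Since $d \mapsto (ed/(2m))^{d/2}$ is decreasing on $[0,2m/e^2]$ and for $\ka_1 \le 2\eps/(e^2 L)$ any $S$ with $|S|\le \ka_1 n$ satisfies $d_S \le Ls \le 2m/e^2$, applying $d_S \ge ls$ yields
\[
\P(S \text{ is a stopping set}) \;\le\; \Big(\tfrac{e\,L\,s}{2m}\Big)^{l s/2} \;\equiv\; p_s.
\]

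Second, I would carry out the union bound. The $s=1$ contribution equals $\sum_j \sfv_n(j)\,(ej/(2m))^{j/2}$; using $\sfv_n(j)=0$ for $j<l$, $\sfv_n(j)\le n$ otherwise, and $m\ge \eps n/2$, the dominant $j=l$ term gives $O(n)\cdot O(n^{-l/2}) = O(n^{1-l/2})$, and higher-degree terms are of strictly smaller order. For $s\ge 2$, combining $\binom{n}{s}\le (en/s)^s$ with the bound on $p_s$ and elementary algebra produces
\[
\binom{n}{s}\,p_s \;\le\; \big[\,C_1\,(s/n)^{(l-2)/2}\,\big]^{s},
\]
for an explicit $C_1=C_1(\eps,L)$; crucially, the exponent $(l-2)/2>0$ because $l\ge 3$.

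Finally, I would sum the tail in two ranges. For $2 \le s \le \log n$, each term is $O(n^{-s(l-2)/2})$, dominated by $s=2$ giving $O(n^{-(l-2)})$, which is $o(n^{1-l/2})$ since $l\ge 3$. For $\log n \le s \le \ka_1 n$, I would shrink $\ka_1$ further so that $C_1 \ka_1^{(l-2)/2} \le 1/e$; each term is then bounded by $e^{-s}\le n^{-1}$ and the geometric sum contributes $O(n^{-1})$. Combining, $\sum_{s=1}^{\lfloor \ka_1 n\rfloor}\binom{n}{s}p_s \le \ka_2\,n^{1-l/2}$, completing the proof. The main technical nuisance, which I expect to be routine given the tools already developed in Section \ref{sec:ensemble1}, is the justification that the Poisson-conditioned socket assignment can be replaced by independent uniform throws at the cost of the $O(1)$ multiplicative factor used above.
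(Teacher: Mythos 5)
Your proposal is correct and is essentially the paper's own argument: both are first-moment bounds on the number of stopping sets with $s$ v-nodes, arriving at the same key estimate of the form $[C\,(s/n)^{l/2-1}]^s$ with the dominant $s=1$ contribution of order $n^{1-l/2}$; the paper merely packages the per-set probability as an exact count $\binom{m}{r}\coeff[(e^{\sfx}-1-\sfx)^r,\sfx^d]\,d!/m^d$ obtained by coefficient extraction and sums over the number $r$ of occupied c-nodes, rather than taking the union bound only at $r=\floor{d_S/2}$ as you do. The ``technical nuisance'' you flag is actually vacuous, since the ensemble assigns each v-socket an independent uniform c-node (the multinomial occupancy law coincides with i.i.d.\ Poissons conditioned on their sum, which is why the paper's formula simply carries the factor $m^{-d}$), so no de-conditioning factor is needed; also your ``iff'' should be ``only if'' (four sockets can split $3{+}1$ over two c-nodes), though only that implication is used.
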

\begin{proof}
Write $m = \floor{n\rho}$. For a positive $\ka$ to be chosen, we will bound the expectation of the number $S$ of stopping sets consisting of fewer than $\ka m$ v-nodes. Indeed 
\begin{eqnarray}
\En S = \sum_{v=1}^{\ka m} \sum_{d=l v}^{L v} \sum_{r=1}^{\floor{d/2}} P_{v,e,r},
\end{eqnarray}
where $P_{v,d,r}$ is the expected number of stopping sets consisting of exactly $v$ v-nodes, with total degree $d$ and connected to exactly $r$ c-nodes.
We have the explicit representation
\begin{eqnarray}\label{1stensemblerep}
P_{v,d,r} = \coeff \lsq \prod_{j=l}^L (1+\sfy \sfz^j)^{\sfv_n(j)}, \sfy^v \sfz^d\rsq 
\frac{1}{m^d} \binom{m}{r} \coeff\lsq (e^\sfx -1 - \sfx)^r, \sfx^d\rsq d!,
\end{eqnarray}
where the first factor is the number of choices of $v$ v-nodes so that their total degree is $d$, $m^d$ counts the total number of ways of connecting these $v$ v-nodes in the graph ensemble, and $ \binom{m}{r} \coeff\lsq (e^\sfx -1 - \sfx)^r, \sfx^d\rsq d!$ counts the number of ways of connecting $r$ c-nodes to the $v$ v-nodes resulting in a stopping set. Since 
\begin{eqnarray}\label{coeffest}
\coeff \lsq \prod_{j=l}^L (1+\sfy \sfz^j)^{\sfv_n(j)}, \sfy^v \sfz^d\rsq  \leq \binom{n}{v}\qquad \mbox{and}\qquad
\coeff\lsq (e^\sfx -1 - \sfx)^r, \sfx^d\rsq \leq 1
\end{eqnarray}
we have, for $m \geq \eps n$, $r \leq \floor{d/2}$, $lv \leq d \leq Lv$ and $w \equiv d - l v$ the bound
\begin{eqnarray}
P_{v,d,r}  \leq \binom{n}{v}\binom{m}{r} \frac{d!}{m^d} 
\leq \ll\frac{e n}{v}\rr^v  \frac{m^{\floor{d/2}}}{\floor{d/2}!} \frac{d!}{m^d}
\leq \ll\frac{e n}{v}\rr^v  \ll \frac{d}{m} \rr^{lv/2 + w/2} 
\leq \lsq c_1 \ll \frac{v}{m}\rr^{l/2-1} \rsq^v \lsq \frac{Lv}{m} \rsq^{w/2}.\label{pvdrest}
\end{eqnarray}
We now fix $\ka >0$ small enough so that $(c_1 \ka^{l/2-1})\vee(\sqrt{L\ka}) \leq \tfrac12$, which allows us to obtain
\begin{eqnarray*}
\En S &\leq& \sum_{v=1}^{\ka m} \sum_{w=0}^{(L-l) v} \sum_{r=1}^{\floor{d/2}}
c_1 \ll \frac{v}{m}\rr^{l/2-1}  \lsq c_1 \ka ^{l/2-1} \rsq^{v-1} \lsq L\ka \rsq^{w/2} \\
&\leq& c_1 m^{1-l/2} \sum_{v=1}^{\infty} 
v^{l/2-1}  \lsq \frac12 \rsq^{v-1} \sum_{w=0}^{\infty} \floor{d/2} \lsq \frac12\rsq^{w} \\
&\leq& c_1 m^{1-l/2} \sum_{v=1}^{\infty} 
v^{l/2}  \lsq \frac12 \rsq^{v-1}
\end{eqnarray*}
and by comparison with the geometric series the last sum converges to a finite positive constant. Thus the thesis of the lemma follows with $\ka_1 = \ka \eps$ and $\ka_2 = c_1 \eps^{1-l/2} \sum_{v=1}^{\infty} v^{l/2} 2^{1-v}$.

\end{proof}


\bibliography{mythesis}

\end{document}